\numberwithin{equation}{section}
\renewcommand{\abstractname}
\title{Uniqueness and nondegeneracy of sign-changing radial solutions to an  almost critical elliptic problem}
\newtheorem*{rk}{Remark}
\newtheorem{thm}{Theorem}[section]
\newtheorem{pop}[thm]{Proposition}
\newtheorem{lem}[thm]{Lemma}
\begin{document}

\author{Weiwei Ao\thanks{  Department of Mathematics, University of British
Columbia, Vancouver, B.C., Canada, V6T 1Z2.  Email: wwao@math.ubc.ca}
\and Juncheng Wei\thanks{ Department of Mathematics University of British
Columbia,  Vancouver, B.C., Canada, V6T 1Z2. Email: jcwei@math.ubc.ca} \and Wei Yao \thanks{ Departamento de Ingenier\'{\i}a  Matem\'atica and Centro de
Modelamiento Matem\'atico (UMI 2807 CNRS), Universidad de Chile, Casilla 170 Correo 3,
Santiago, Chile. Email: wyao.cn@gmail.com} }

\maketitle

\begin{abstract}
We study sign-changing radial solutions for the following semi-linear elliptic equation
\begin{align*}
\Delta u-u+|u|^{p-1}u=0\quad{\rm{in}}\ \mathbb{R}^N,\quad u\in H^1(\mathbb{R}^N),
\end{align*}
where $1<p<\frac{N+2}{N-2}$, $N\geq3$. It is well-known that this equation has a unique positive radial solution and  sign-changing radial solutions with exactly $k$ nodes.  In this paper, we show that such sign-changing radial solution is also unique when $p$ is close to $\frac{N+2}{N-2}$. Moreover, those solutions are non-degenerate, i.e., the kernel of the linearized operator is exactly $N$-dimensional.
\end{abstract}

%~~~~~~~~~~~~~~~~~~~~~~~~~~~~~~~~~~~~~~~~~~~~~~~~~~~~~~~~~~~~~~~~~~~~~~~~~~~~~~~~~~~~~~~~~~
%~~~~~~~~~~~~~~~~~~~~~~~~~~~~~~~~~~~~~~~~~~~~~~~~~~~~~~~~~~~~~~~~~~~~~~~~~~~~~~~~~~~~~~~~~~~~~~~
%~~~~~~~~~~~~~~~~~~~~~~~~~~~~~~~~~~~~~~~~~~~~~~~~~~~~~~~~~~~~~~~~~~~~~~~~~~~~~~~~~~~~~~~~~~~~~~~
\section{Introduction and main results}

In this paper we establish the uniqueness and nondegeneracy of sign-changing radially symmetric solutions to the following semi-linear elliptic equation
\begin{align}\label{eq0.1}
\Delta u-u+|u|^{p-1}u=0\quad{\rm{in}}\  \mathbb{R}^N,\quad u\in H^1(\mathbb{R}^N)
\end{align}
where $1<p<2^*-1$ and $2^*=2N/(N-2)$ is the critical Sobolev exponent for the embedding of $H^1(\mathbb{R}^N)$ into $L^{2^*}(\mathbb{R}^N),N\geq3$.
More precisely, for any $k\in\mathbb{N}$, we will prove that the following ODE problem
\begin{align}\label{eq0.1r}
\left\{\begin{array}{ll}
u''+\frac{N-1}{r}u'-u+|u|^{p-1}u=0,\quad r\in(0,\infty), \quad N\geq3,\\
u'(0)=0,\quad\lim\limits_{r\rightarrow\infty}u(r)=0,
\end{array}
\right.
\end{align}
has a unique solution $u\in C^2[0,\infty)$ such that $u(0)>0$ and $u$ has exactly $k$ zeros. Moreover, this unique solution is non-degenerate in the space of $H^1$ functions.

\medskip

Equation (\ref{eq0.1}) arises in various models in physics, mathematical physics and biology. In particular, the study of standing waves for the nonlinear Klein-Gordon or Schrodinger equations reduce to (\ref{eq0.1}).  The uniqueness and nondegeneracy of standing waves play essential role in the study of soliton dynamics or blow up for nonlinear Schr\"{o}dinger equations. We refer to the papers of Berestycki and Lions \cite{bl1}, \cite{bl2} for mathematical foundations of (\ref{eq0.1}), Rapahael \cite{rap1},  Merle and Raphael \cite{mer-rap}, Nakanishi and Schlag \cite{NS}  for backgrounds on dynamics and blow-ups of NLS.

The classical work of Gidas, Ni and Nirenberg \cite{GNN} states that all  positive solutions of (\ref{eq0.1}) are radially symmetric around some point. The uniqueness of positive solutions to (\ref{eq0.1}) has been extensively studied during the last thirty years. It was initiated by Coffman \cite{C1} with $p=3$ and $N=3$, and then improved by McLeod and Serrin \cite{MS} to $1<p\leq \frac{N}{N-2}$,  and finally extended by Kwong \cite{K} to all values of exponent $1<p<\frac{N+2}{N-2}$ by shooting method.  After these results there have been many extensions and refinements, see for example the works \cite{PS}, \cite{CL}, \cite{ST} and references therein. An essential tool in studying (\ref{eq0.1}) is the shooting method, i.e., one studies  the behavior of solutions $u(r,\alpha)$ to the initial value problem
\begin{align}\label{eq0.1i}
\left\{\begin{array}{ll}
u''+\frac{N-1}{r}u'-u+u^{p}=0,\quad r\in(0,\infty),\quad N\geq3,\\
u(0)=\alpha,\quad u'(0)=0,
\end{array}
\right.
\end{align}
for $\alpha\in(0,\infty)$ and obtains series of comparison results between two solutions to (\ref{eq0.1i}) with different initial values. One feature of their approach is that it can be extended to the $m$-Laplacian operator and more general nonlinearities, see \cite{ST} for example. However, it seems very hard to apply the approach to sign-changing solutions if one does not understand the complicated intersection between
two solutions to (\ref{eq0.1}) in the second nodal domain.

\medskip

For sign-changing radial solutions, the existence results have been established by Coffman \cite{C2} and McLeod, Troy and
Weissler \cite{MTW} using ODE shooting techniques and a
scaling argument. There are also other approaches including variational methods (Bartsch and William \cite{BW}, Struwe \cite{struwe}) and heat flow  (Conti-Verzini-Terracinni \cite{conti}). But for the uniqueness of sign-changing solutions, to our knowledge, there are few work on sign-changing solutions. In \cite{CGY}, using Coffman's approach, Cortazar, Garcia-Huidobro and Yarur study the uniqueness of sign-changing radial solution to
\begin{align}\label{eq0.1g}
\Delta u+f(u)=0,\quad{\rm{in}}\  \mathbb{R}^N,
\end{align}
under some convexity and sublinear growth conditions of $f(u)$. In the canonical case of $f(u)=|u|^{p-1}u-|u|^{q-1}u$, the condition on $p$ and $q$ is that:
\begin{align}\label{cd}
p\geq1,\ 0<q<p,\quad{\rm{and}}\ p+q\leq\frac{2}{N-2}.
\end{align}
 The result we present here is a contribution to this matter which covers the $q=1$ case and superlinear case. We shall employ a different method--the Liapunov-Schmidt reduction--to prove our result. 

\medskip

Up to now, the Lyapunov-Schmidt reduction method, which reduces an infinite-dimensional problem to a finite-dimensional one, has been widely used successfully in
constructing various solutions, see for example \cite{W}, \cite{O}, \cite{GW}, \cite{PF2}. For the uniqueness problem, Wei \cite{W} applied this method and  established the uniqueness and non-degeneracy of boundary spike solutions for the following singularly perturbed Neumann boundary problem:
\begin{align}
\left\{\begin{array}{ll}
\epsilon^2\Delta u-u+u^p=0\quad{\rm{in}}\ \Omega,\\
u>0\quad{\rm{in}}\ \Omega\ {\rm{and}}\ \frac{\partial u}{\partial\nu}=0\quad{\rm{on}}\ \partial\Omega,
\end{array}
\right.
\end{align}
where $\epsilon>0$ is a small parameter, $\Omega$ is a smooth
bounded domain in $\mathbb{R}^N$, and $p$ is subcritical. The main idea is to reduce the problem in $H^2(\Omega)$ into a finite-dimensional problem on the space of
spikes and then compute the number of critical points for a finite-dimensional problem. The same idea has been used successfully by Grossi \cite{G} in computing the number of single-peak solutions of the nonlinear Schr$\ddot{\mathrm{o}}$dinger equation
\begin{align}
\left\{\begin{array}{ll}
-\epsilon^2\Delta u+V(x)u=u^p\quad{\rm{in}}\ \mathbb{R}^N;\\
u>0,
\end{array}
\right.
\end{align}
for a suitable class of potentials $V$ and critical point $P$. But for the uniqueness problem, we do not know whether the Lyapunov-Schmidt reduction method can be used to problems other than the singularly perturbed one. 

\medskip

The purpose of this paper is to deal with the uniqueness of sign-changing radial solutions to (\ref{eq0.1}) by the Liapunov-Schmidt reduction. After setting $p=\frac{N+2}{N-2}-\epsilon$, then problem (\ref{eq0.1}) will become a singularly perturbed one and then we can use the idea in \cite{W} and \cite{DW} to establish the uniqueness and non-degeneracy of sign-changing solution to (\ref{eq0.1}) for sufficient small $\epsilon>0$.

\medskip

Our first result concerns the uniqueness of sign-changing radial solution:

\begin{thm}\label{thm-unique}
For any positive integer $k$, there exists a positive constant $\epsilon_0$ such that for
$p\in\big(\frac{N+2}{N-2}-\epsilon_0,\frac{N+2}{N-2}\big)$, there exists an unique sign-changing radial solution to (\ref{eq0.1}) with $u(0)>0$ and exactly $k$ zeros.
\end{thm}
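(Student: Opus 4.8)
The plan is to use that as $p\uparrow\frac{N+2}{N-2}$ the equation degenerates towards the critical Sobolev problem $\Delta U+U^{2^*-1}=0$, whose radial solutions are the Aubin--Talenti bubbles $U_\mu(x)=\mu^{-\frac{N-2}{2}}U(x/\mu)$, $U(x)=c_N(1+|x|^2)^{-\frac{N-2}{2}}$. Writing $p=\frac{N+2}{N-2}-\epsilon$, I would first establish the asymptotic profile of a sign-changing solution $u_\epsilon$ with exactly $k$ nodes: after rescaling each nodal bump it converges to a bubble, so that
\begin{equation*}
u_\epsilon\approx W_{\boldsymbol{\mu}}:=\sum_{i=0}^{k}(-1)^i\,U_{\mu_i}
\end{equation*}
is a tower of bubbles at the origin with separated concentration scales $\mu_0\ll\mu_1\ll\cdots\ll\mu_k$. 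The key linear input is that the radial kernel of the limiting linearized operator $L_0=\Delta+(2^*-1)U^{2^*-2}$ is one-dimensional, spanned by the dilation mode $Z=\tfrac{N-2}{2}U+r\,U'$, while the translation modes $\partial_{x_j}U$ are non-radial and will account separately for the $N$-dimensional kernel over $\mathbb{R}^N$.

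With this ansatz I would run the Lyapunov--Schmidt reduction inside the space of radial $H^1$ functions, treating the $-u$ term as a lower-order perturbation at each bubble scale. Projecting the equation onto the orthogonal complement of $\mathrm{span}\{Z_0,\dots,Z_k\}$, where $Z_i$ is the dilation mode of $U_{\mu_i}$, I would solve for a unique small correction $\phi_{\boldsymbol{\mu}}$ by a contraction argument. Here the one-dimensionality of the radial kernel is essential: it makes the linearized operator at $W_{\boldsymbol{\mu}}$ uniformly invertible on the orthogonal complement once the $k+1$ approximate kernel directions are removed. The full equation then reduces to the finite-dimensional system of solvability conditions, equivalently to finding critical points of the reduced energy $\mathcal{F}(\boldsymbol{\mu})=J_\epsilon\big(W_{\boldsymbol{\mu}}+\phi_{\boldsymbol{\mu}}\big)$, where $J_\epsilon$ is the energy associated with \eqref{eq0.1}.

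The heart of the matter is the analysis of $\mathcal{F}$. Expanding the energy, the exponent defect produces a term $-D\,\epsilon\log\mu_i$ (since $\int U_\mu^{2^*-\epsilon}=\mu^{\frac{(N-2)\epsilon}{2}}\int U^{2^*-\epsilon}$), the mass term $-u$ contributes $B\,\mu_i^{2}$, and the interaction of consecutive opposite-sign bubbles contributes $c_3(\mu_i/\mu_{i+1})^{\frac{N-2}{2}}$, so that schematically
\begin{equation*}
\mathcal{F}=c_0(k+1)+\sum_{i=0}^{k}\big(B\,e^{2s_i}-D\,\epsilon\,s_i\big)+c_3\sum_{i=0}^{k-1}e^{\frac{N-2}{2}(s_i-s_{i+1})}+\text{h.o.t.},\qquad s_i:=\log\mu_i,
\end{equation*}
with positive constants $B,D,c_3$ for $N\geq5$ (and the known logarithmic/linear modifications of the mass term for $N=4,3$). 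The decisive observation is that in the logarithmic variables $s_i$ each summand is convex and the whole is coercive, so $\mathcal{F}$ is strictly convex to leading order and therefore possesses a unique critical point $\boldsymbol{\mu}^*=\boldsymbol{\mu}^*(\epsilon)$, automatically nondegenerate because the Hessian in the $s_i$ is positive definite. Through the reduction this yields exactly one solution near the bubble tower.

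I expect two steps to carry the real difficulty. The first, which is what turns a construction into a uniqueness statement, is the a priori analysis showing that an \emph{arbitrary} radial solution with $k$ nodes must, for small $\epsilon$, develop precisely this bubble-tower profile with separated scales and a controlled remainder; this requires sharp energy and pointwise estimates uniform in $\epsilon$, and once available it places any such solution in a neighborhood of some $W_{\boldsymbol{\mu}}$, where the local uniqueness of $\phi_{\boldsymbol{\mu}}$ and of $\boldsymbol{\mu}^*$ force it to coincide with the reduced solution. The second is the nondegeneracy assertion: decomposing the full linearized operator $L_\epsilon=\Delta-1+p|u_\epsilon|^{p-1}$ into spherical harmonics, the modes $\ell\geq2$ carry no kernel (inherited from the bubbles by perturbation), the mode $\ell=1$ contributes exactly the $N$-dimensional span of the translations $\partial_{x_j}u_\epsilon$, and the radial mode $\ell=0$ has trivial kernel precisely because the Hessian of $\mathcal{F}$ at $\boldsymbol{\mu}^*$ is invertible and $L$ is uniformly invertible on the orthogonal complement; summing the modes gives an $N$-dimensional kernel, as claimed.
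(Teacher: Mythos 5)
Your outline is, up to a change of variables, the paper's own strategy: a priori blow-up analysis forcing every $k$-node solution into a neighborhood of a superposition of $k+1$ separated profiles, a Lyapunov--Schmidt reduction onto the scale parameters, and then uniqueness of the critical point of a reduced energy whose leading part is a sum of exponentials. The paper implements this after the Emden--Fowler transformation $v(t)=r^{2/(p-1)}u(r)$, $r=e^t$: your bubble tower becomes a difference of one-dimensional solitons $w(t-t_1)-w(t-t_2)$, your scales $s_i=\log\mu_i$ become the translation parameters $t_j$, the mass term becomes the potential $e^{2t}$, and your schematic energy $\sum_i(Be^{2s_i}-D\epsilon s_i)+c_3\sum_i e^{\frac{N-2}{2}(s_i-s_{i+1})}$ is exactly the paper's $\widetilde K_\epsilon(\mathbf{t})$ of Lemma \ref{lem-K} read through this change of variables. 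What the 1D formulation buys is a clean treatment of low dimensions: for $N\le 6$, and in particular $N=3,4$ where the Aubin--Talenti bubble is not in $L^2$ so your ``mass term contributes $B\mu_i^2$'' does not exist as written, the profiles must be replaced by corrected ones solving the linear equation with the $e^{2t}$ weight, with explicit Bessel-function corrections (Lemma \ref{lem-4} and Appendix A); this is where your gestured ``logarithmic/linear modifications'' actually get proved. Also note that the a priori step you correctly single out as the real difficulty is carried out in the paper by energy bounds and Ni--Takagi-type blow-up analysis (Lemmas \ref{lem-1}--\ref{lem-3}, \ref{phi}, \ref{basic lemma}), and that matching an arbitrary solution to the reduced family requires the re-decomposition argument in the remark after Lemma \ref{crucial lemma}.

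There is one genuine gap in your counting argument: strict convexity of the \emph{leading-order} reduced energy does not ``therefore'' give uniqueness of critical points of the \emph{true} reduced energy $\mathcal F$. The reduction errors are $o(\beta)$ only in $C^0$ and $C^1$ uniformly on the configuration set, and in $C^2$ only \emph{at critical points} (this is precisely parts (1)--(3) of Proposition \ref{pop4.1}); convexity is a second-derivative property and is not stable under such perturbations, so $\mathcal F$ itself need not be convex and could a priori have several critical points clustered near the unique critical point of the leading term. The paper closes this by degree theory: the $C^1$-closeness gives $\deg(\partial K_\epsilon,\Lambda,0)=\deg(\partial\widetilde K_\epsilon,\Lambda,0)=1$, while the Hessian comparison at critical points shows every critical point of $K_\epsilon$ has positive definite Hessian and hence local degree $+1$, so there is exactly one. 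Your positive-definite Hessian computation is exactly the needed ingredient, but it must be invoked at critical points of $\mathcal F$ via such a counting argument, not as a convexity statement about $\mathcal F$.
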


\begin{rk}
Using the same idea, we can give a new proof on the uniqueness of positive solution to the equation (\ref{eq0.1}) with an almost critical power.
\end{rk}

\medskip

Our second result concerns the eigenvalue estimates associated with the linearized operator at $u_\epsilon$, the solutions obtained in Theorem \ref{thm-unique}:
\begin{align}
\overline{L}_\epsilon\equiv \Delta-1+p|u_\epsilon|^{p-1}.
\end{align}

We have the following non-degeneracy result:
\begin{thm}\label{thm1.2}
There exists a number $\epsilon_0>0$ such that for $p\in(\frac{N+2}{N-2}-\epsilon_0,\frac{N+2}{N-2})$, $u_\epsilon$ is non-degenerate, i.e., if $ \phi$ satisfies
\begin{equation}
\Delta \phi -\phi + p |u|^{p-1} \phi=0,  \ \ \ \ \ \  |\phi| \leq 1, \ \mbox{in} \ {\mathbb R}^N
\end{equation}
then
\begin{eqnarray}
\phi \in \mathrm{span}\left\{\frac{\partial u_\epsilon}{\partial x_1},\cdots,\frac{\partial u_\epsilon}{\partial x_N}\right\}.
\end{eqnarray}

\end{thm}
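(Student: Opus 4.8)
The plan is to reduce the equation $\overline{L}_\epsilon\phi=0$ to a family of one–dimensional problems by separating variables, and then to run a multi–scale blow–up argument exploiting the fact that, as $\epsilon\to0$, $u_\epsilon$ concentrates into a superposition of Aubin–Talenti bubbles. First I would decompose $\phi=\sum_{m\ge0}\sum_i\phi_{m,i}(r)Y_{m,i}(\theta)$ into spherical harmonics, where $-\Delta_{S^{N-1}}Y_{m,i}=m(m+N-2)Y_{m,i}$, so that each radial profile solves
\[
L_m\phi_m:=\phi_m''+\tfrac{N-1}{r}\phi_m'-\phi_m+p|u_\epsilon|^{p-1}\phi_m-\tfrac{m(m+N-2)}{r^2}\phi_m=0.
\]
This is a second–order linear ODE with a regular singular point at $r=0$ (indicial roots $m$ and $-(m+N-2)$) and with $L_m\sim\partial_{rr}+\tfrac{N-1}{r}\partial_r-1$ at infinity. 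Hence the bounded–at–the–origin solutions form a one–dimensional space (the companion solution blows up like $r^{-(m+N-2)}$), and so do the decaying–at–infinity solutions (the companion grows like $e^{r}$). Therefore, for \emph{every} $m$, the space of bounded solutions of $L_m\phi_m=0$ is at most one–dimensional. Since differentiating the radial equation gives $L_1u_\epsilon'=0$ with $u_\epsilon'$ bounded and decaying, the mode–$1$ bounded kernel is exactly $\mathrm{span}\{u_\epsilon'\}$, which accounts precisely for the $N$ translations $\partial_{x_j}u_\epsilon=u_\epsilon'(r)\,x_j/r$. It then remains only to show that $L_0$ and $L_m$ ($m\ge2$) admit no nontrivial bounded solution.

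For these remaining modes I would argue by contradiction through the singular perturbation. From the construction underlying Theorem \ref{thm-unique}, $u_\epsilon$ is, after rescaling, close to a superposition of $k+1$ bubbles $U$ solving $\Delta U+U^{(N+2)/(N-2)}=0$, placed at hierarchically separated scales $\mu_{0,\epsilon}\gg\cdots\gg\mu_{k,\epsilon}$ with alternating signs. Assume $L_m\phi_\epsilon=0$ with $\|\phi_\epsilon\|_\infty=1$ along a sequence $\epsilon\to0$. Locating where the sup norm is attained and rescaling about the corresponding scale $\mu_{j,\epsilon}$, the functions $\tilde\phi_\epsilon(y)=\phi_\epsilon(\mu_{j,\epsilon}y)$ converge to a bounded solution $\Phi$ of the linearized critical equation in mode $m$,
\[
\Delta\Phi-\tfrac{m(m+N-2)}{|y|^2}\Phi+p_*\,U^{p_*-1}\Phi=0,\qquad p_*=\tfrac{N+2}{N-2},
\]
the mass term disappearing in the limit because $\mu_{j,\epsilon}\to0$. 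The kernel of the bubble's linearized operator is spanned by the $N$ translations (mode $1$) and the single dilation $Z_0=\tfrac{N-2}{2}U+y\cdot\nabla U$ (mode $0$); in particular it is trivial for every $m\ge2$. Thus when $m\ge2$ one obtains $\Phi\equiv0$, and a covering argument over the inner bubble cores together with the intermediate annuli and the far field—where $\overline L_\epsilon$ enjoys a maximum principle because its potential is negative there—propagates $\phi_\epsilon\to0$ uniformly, contradicting $\|\phi_\epsilon\|_\infty=1$.

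The delicate case is $m=0$, where each rescaled limit is a multiple $c_jZ_0$ of the dilation and dimension counting no longer decides the issue, precisely because $Z_0$ is the approximate radial kernel produced by the broken scaling invariance. To eliminate it I would use the subcritical perturbation quantitatively: testing the equation against the approximate dilation modes at the various scales and expanding in $\epsilon$ yields a linear system for $(c_0,\dots,c_k)$ whose matrix equals, up to higher–order terms, the Hessian of the reduced energy that fixes the balanced scales $\mu_{j,\epsilon}$. Since Theorem \ref{thm-unique} is proved by exhibiting a unique, nondegenerate critical point of this finite–dimensional reduced problem, that matrix is invertible, forcing $c_j=0$ for all $j$ and hence $\Phi\equiv0$ once more; the same covering argument then contradicts the normalization.

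The principal obstacle is the sign–changing structure. Because $u_\epsilon$ and $u_\epsilon'$ both change sign, the positivity and maximum–principle arguments that yield nondegeneracy of the positive ground state are unavailable globally, and one must instead control the mutual interaction of the $k+1$ bubbles living at well–separated scales. Concretely, the two hard points are: (i) making the multi–scale blow–up rigorous, so that vanishing of every local limit genuinely forces $\phi_\epsilon\to0$ in \emph{all} regions, including the transition annuli where consecutive scales overlap and the error between $u_\epsilon$ and the bubble tower is least controlled; and (ii) in mode $0$, identifying the coefficient system with the Hessian of the reduced problem, so that the nondegeneracy established in the uniqueness proof can be reused to close the argument.
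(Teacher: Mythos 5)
Your frame---spherical-harmonic decomposition, one-dimensionality of the bounded kernel of each radial operator $L_m$, identification of mode $1$ with $\mathrm{span}\{u_\epsilon'\}$---coincides with the paper's, and your ODE dimension count for mode $1$ is equivalent to the paper's Wronskian identity $\phi_k'u_\epsilon'-\phi_ku_\epsilon''=0$. For the modes $k\ge N+1$ you take a genuinely different route: the paper does no blow-up there, but instead notes that since $u_\epsilon'$ has exactly one zero and $\lambda_k>\lambda_1$, Sturm comparison forces $\phi_k$ to have a sign, so a nontrivial $\phi_k$ could only occur if $\lambda_k=-\nu_1(p)$, and Proposition \ref{mode2} gives $-\nu_1(p)\to N-1<2N\le\lambda_k$. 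Your alternative (rescaling at each bubble scale, nondegeneracy of the Aubin--Talenti bubble in modes $m\ge2$, then a covering argument) is workable for these modes, precisely because for $m\ge1$ the singular term $\lambda_m/r^2$ dominates the tail potential $p|u_\epsilon|^{p-1}=p|v_\epsilon|^{p-1}r^{-2}$ at any $O(1)$ Emden--Fowler distance from the cores, so the maximum principle does hold in the necks and far field; you trade the eigenvalue asymptotics of Proposition \ref{mode2} for a longer compactness argument. For mode $0$, your mechanism (a linear system whose matrix is the Hessian of the reduced energy, invertible by the uniqueness construction) is the same mechanism as the paper's Proposition \ref{pop-eigen}.

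The genuine gap is the closing step of your mode-$0$ argument: ``the same covering argument then contradicts the normalization.'' For $m=0$ the operator $\Delta-1+p|u_\epsilon|^{p-1}$, acting on functions bounded in the ordinary sup norm, satisfies no maximum principle in the necks. Quantitatively, for $N=3$ Lemma \ref{basic lemma} gives $e^{t_2}=a\beta$, $e^{-|t_1-t_2|/2}=b\beta$; at $t=t_2-\sigma|t_1-t_2|$ with $0<\sigma<\tfrac12$ fixed one has $|v_\epsilon|\sim A_{\epsilon,3}(b\beta)^{\sigma}$ and $e^{2t}=a^2\beta^2(b\beta)^{4\sigma}$, hence
\begin{equation*}
p\,|u_\epsilon|^{p-1}=p\,|v_\epsilon|^{p-1}e^{-2t}\sim p\,A_{\epsilon,3}^4\,a^{-2}\,\beta^{-2}\gg 1
\end{equation*}
uniformly on a fixed proportion of the neck, and similarly on the annulus $e^{t_2}\ll r\ll e^{t_2}\beta^{-1/2}$ outside the outer bubble. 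So the zeroth-order coefficient is large and \emph{positive} exactly where you need it negative: smallness cannot be propagated pointwise from the cores, and the regions where local limits must be identified are not of fixed rescaled size but expand like $\beta^{-1/2}$. The repair is precisely the paper's framework: conjugate by the Emden--Fowler weight, $\psi(t)=e^{\alpha t}\phi_0(e^t)$, $\alpha=\tfrac{2}{p-1}$. Then $\psi$ solves $\psi''-\beta\psi'-(\gamma+e^{2t})\psi+p|v_\epsilon|^{p-1}\psi=0$, whose potential is $\le-\gamma/2<0$ at $O(1)$ distance from the two solitons, uniformly in the neck and at both infinities, and boundedness of $\phi_0$ makes $\psi$ admissible since $|\psi|\le e^{\alpha t}$. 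In this weighted norm the a priori estimate of Proposition \ref{pop3.1} and the small-eigenvalue analysis of Proposition \ref{pop-eigen} (two small eigenvalues $\sim-c_0\xi_j\epsilon$, with $\xi_j$ the nonzero eigenvalues of $\nabla^2\widetilde K_\epsilon$) go through; they also supply the \emph{global} closeness of the kernel element to $\mathrm{span}\{\partial_{t_1}w_{\epsilon,\textbf{t}},\partial_{t_2}w_{\epsilon,\textbf{t}}\}$ that your testing step tacitly requires---local $C^2_{\rm loc}$ limits at the cores alone do not justify identifying your coefficient matrix with the Hessian up to $o(\beta)$.
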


\medskip

The organization of the paper is as follows. In Section 2 we give some preliminary analysis. In Section 3 a finite dimensional reduction procedure is given. In Section 4 we show the existence and uniqueness. Finally in Section 5 we give the small eigenvalue estimate and complete the proof of Theorem \ref{thm1.2}.

\medskip

Throughout this paper we denote various generic constants by $C$. We
use $O(B),o(B)$ to mean $|O(B)|\leq C|B|,o(B)/|B|\rightarrow0$ as
$|B|\rightarrow0$, respectively.

\medskip

\noindent {\bf Acknowledgments:} The research of J. Wei is partially supported by NSERC of Canada.

%~~~~~~~~~~~~~~~~~~~~~~~~~~~~~~~~~~~~~~~~~~~~~~~~~~~~~~~~~~~~~~~~~~~~~~~~~~~~~~~~~~~~~~~~~~
%~~~~~~~~~~~~~~~~~~~~~~~~~~~~~~~~~~~~~~~~~~~~~~~~~~~~~~~~~~~~~~~~~~~~~~~~~~~~~~~~~~~~~~~~~~~~~~~

%\section{Question and comments}
%\begin{enumerate}[(1)]
%\item Consider the uniqueness and non-degeneracy of sign-changing radial solutions for all $1<p<2^\star-1$. As a first step, consider $p$ closed to $1$ using the similar method.
%
%\item Consider the more general form $f(u)$.
%
%\item Computer the Morse index of sigh-changing radial solutions.
%
%\item What's the relation between uniqueness and Morse index for this kinds of equation?
%
%\item Classify all the radial solutions of (\ref{eq0.1}).
%
%\item Study the non-radial sign-changing solution in bounded and unbounded domains.
%
%\item Classify all the bounded solutions of (\ref{eq0.1}). First consider the positive solutions, do all of these solution belongs to the following classes: decay in all direction (radial solution, moving plane); decay in all but one direction (Dancer solutions, Hamiltonian identities); Others???
%For the sign-changing solutions, what can we obtain first???
%
%\item Study the stability of sign-changing radial solutions with the small eigenvalue estimates.
%\end{enumerate}
%
%
%~~~~~~~~~~~~~~~~~~~~~~~~~~~~~~~~~~~~~~~~~~~~~~~~~~~~~~~~~~~~~~~~~~~~~~~~~~~~~~~~~~~~~~~~~~~~~~~
\section{The asymptotic behavior of the solutions}

In this section, we will give some preliminary analysis. First by Pohozaev's non-existence result, equation (\ref{eq0.1}) only has trivial solution $u=0$ when $p\geq 2^*-1$, see \cite{P}. So if $u_\epsilon$ is a sign-changing solution to (\ref{eq0.1}) with $p=(N+2)/(N-2)-\epsilon,\epsilon>0$, then $u_\epsilon$ must blow up as $\epsilon\rightarrow0$. Moreover, by the result of Felmer, Quaas, Tang and Yu \cite{FQTY},
\begin{align}
u_\epsilon(0)\rightarrow\infty\quad \mathrm{and}\quad R_\epsilon\rightarrow0
\end{align}
as
$\epsilon\rightarrow0$, where $R_\epsilon$ is the first zero point
of $u_\epsilon$. Without loss of generality, in this paper,  we assume that $u_\epsilon(0)>0$ and we will consider sign-changing once radial solutions to (\ref{eq0.1}).  The proofs can be easily modified to deal with sign-changing solutions with more than one nodes.

The key estimate we shall obtain first is  the relation between $ u_\epsilon (0)$ and the first radius $R_\epsilon$. To this end, we take the so-called Emden-Fowler transformation to $u_\epsilon$ as in \cite{DW}. Let
\begin{align*}
v_\epsilon(t)=r^\alpha u_\epsilon(r),\quad r=e^t,\ \alpha=\frac{2}{p-1}.
\end{align*}
Then $v_\epsilon$ satisfies
\begin{align}\label{eq-v}
v''-\beta v'-(\gamma+e^{2t})v+|v|^{p-1}v=0,\quad
t\in(-\infty,\infty),
\end{align}
where
\begin{align}
p=\frac{N+2}{N-2}-\epsilon,\
\beta=\frac{(N-2)^2\epsilon}{4-(N-2)\epsilon}\quad\ {\rm{and}}\quad
\gamma=\frac{(N-2)^2}{4}-\frac{\beta^2}{4}.
\end{align}

Recall that the corresponding energy functional of equation
(\ref{eq0.1}) is
\begin{align*}
\widetilde{E}_\epsilon(u)=\frac{1}{2}\int_0^\infty\big(|u'|^2+|u|^2\big)r^{N-1}\,dr-\frac{1}{p+1}\int_0^\infty|u|^{p+1}r^{N-1}\,dr,
\end{align*}
and by the Emden-Fowler transformation,
\begin{align*}
\int_0^\infty |u'|^2r^{N-1}\,dr
=\int_{-\infty}^\infty\Big[|v'|^2+\gamma|v|^2\Big]e^{-\beta t}\,dt;
\end{align*}
\begin{align*}
\int_0^\infty |u|^2r^{N-1}\,dr =\int_{-\infty}^\infty
e^{2t}|v|^2e^{-\beta t}\,dt;
\end{align*}
\begin{align*}
\int_0^\infty |u|^{p+1}r^{N-1}\,dr =\int_{-\infty}^\infty
|v|^{p+1}e^{-\beta t}\,dt.
\end{align*}
Thus the corresponding energy functional of equation (\ref{eq-v})
is
\begin{align}
E_\epsilon(v)=\frac{1}{2}\int_{-\infty}^\infty\Big[|v'|^2+(\gamma+e^{2t})|v|^2\Big]e^{-\beta
t}\,dt-\frac{1}{p+1}\int_{-\infty}^\infty |v|^{p+1}e^{-\beta t}\,dt,
\end{align}
and $u(r)\in H^1(\mathbb{R}^N)$ if and only if $v(t)\in H$, where
$H$ is the Hilbert space defined by
\begin{align}
H\equiv\{v\in
H^1(\mathbb{R})|\int_{-\infty}^\infty\Big[|v'|^2+(\gamma+e^{2t})|v|^2\Big]e^{-\beta
t}\,dt<\infty\}
\end{align}
with the inner product
\begin{align}
(v,w)_\epsilon=\int_{-\infty}^\infty\Big[v'w'+(\gamma+e^{2t})vw\Big]e^{-\beta
t}\,dt.
\end{align}
Similarly, we define the weighted $L^2$-product as follows:
\begin{align}
\langle v,w\rangle_\epsilon=\int_{-\infty}^\infty vwe^{-\beta
t}\,dt.
\end{align}

To get the asymptotic behaviour of the solutions, by standard blow-up analysis, we first have the following Lemma:

\begin{lem}\label{lem-1}
Let $v_\epsilon$ be a solution of (\ref{eq-v}). Then there exists a positive constant $C=C(N)$ such that
\begin{eqnarray}
\|v_\epsilon\|_\infty\leq C.
\end{eqnarray}
\end{lem}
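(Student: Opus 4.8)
The plan is to argue by contradiction using a blow-up (concentration--rescaling) analysis directly on equation (\ref{eq-v}). Since $v_\epsilon\in H$ it decays at $\pm\infty$ and is continuous, so $M_\epsilon:=\|v_\epsilon\|_\infty$ is attained at some finite $t_\epsilon$; replacing $v_\epsilon$ by $-v_\epsilon$ if necessary we may assume $v_\epsilon(t_\epsilon)=M_\epsilon>0$. Suppose, for contradiction, that $M_\epsilon\to\infty$ along a sequence $\epsilon\to0$. I would then introduce the rescaling
\[
w_\epsilon(s)=M_\epsilon^{-1}v_\epsilon(t_\epsilon+\mu_\epsilon s),\qquad \mu_\epsilon=M_\epsilon^{-(p-1)/2},
\]
chosen so that the nonlinearity stays of order one: $w_\epsilon$ satisfies
\[
w_\epsilon''-\beta\mu_\epsilon w_\epsilon'-\mu_\epsilon^2\big(\gamma+e^{2t_\epsilon}e^{2\mu_\epsilon s}\big)w_\epsilon+|w_\epsilon|^{p-1}w_\epsilon=0,
\]
with $w_\epsilon(0)=1$, $w_\epsilon'(0)=0$ and $\|w_\epsilon\|_\infty=1$.

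The key preliminary step is to control the weight coefficient $\mu_\epsilon^2(\gamma+e^{2t_\epsilon})$. Because $t_\epsilon$ is an interior maximum, $v_\epsilon''(t_\epsilon)\le0$, and evaluating (\ref{eq-v}) at $t_\epsilon$ (where $v_\epsilon'(t_\epsilon)=0$) gives $M_\epsilon^{p-1}\ge\gamma+e^{2t_\epsilon}$, hence $\mu_\epsilon^2(\gamma+e^{2t_\epsilon})\le1$. Passing to a subsequence, $\mu_\epsilon^2(\gamma+e^{2t_\epsilon})\to c\in[0,1]$. Since $\beta\to0$, $\mu_\epsilon\to0$ and $p\to p_0:=\frac{N+2}{N-2}$, and since $e^{2\mu_\epsilon s}\to1$ uniformly on compact $s$-sets, standard interior ODE estimates give $w_\epsilon\to w$ in $C^2_{loc}(\mathbb{R})$, where $w$ is a nontrivial bounded entire solution of $w''-cw+|w|^{p_0-1}w=0$ with $w(0)=1=\max_{\mathbb{R}}w$ and $|w|\le1$.

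It remains to contradict the existence of such a limit. If $w$ is sign-changing, the conserved Hamiltonian of this autonomous ODE forces $w$ to be a periodic orbit crossing zero, hence with infinitely many zeros on $\mathbb{R}$; by $C^1_{loc}$-convergence $v_\epsilon$ would then possess an unbounded number of zeros inside the shrinking window $(t_\epsilon-\mu_\epsilon R,t_\epsilon+\mu_\epsilon R)$, contradicting the fact that $v_\epsilon$ has exactly $k$ zeros. If instead $w$ is one-signed (a homoclinic soliton or a positive periodic orbit), I would reach a contradiction through energy: substituting the rescaling into $E_\epsilon$ shows that the energy carried by $v_\epsilon$ on $(t_\epsilon-\mu_\epsilon R,t_\epsilon+\mu_\epsilon R)$ is, to leading order, a positive multiple of $M_\epsilon^{(p+3)/2}\int_{-R}^{R}|w|^{p+1}\,ds$, which diverges as $M_\epsilon\to\infty$. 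This contradicts the uniform bound on $E_\epsilon(v_\epsilon)$, equivalently on $\int_0^\infty|u_\epsilon|^{p+1}r^{N-1}\,dr$, for the solutions under consideration.

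I expect the main obstacle to be the justification of this last uniform energy bound. The Pohozaev and Nehari identities for (\ref{eq0.1}) only pin down the ratio $\int u_\epsilon^2/\int|\nabla u_\epsilon|^2=O(\epsilon)$ and do not by themselves bound the energy; the bound must instead be extracted from the variational (min--max) characterization of the $k$-node solutions or from the asymptotic description in \cite{FQTY}. A secondary technical point is the $C^2_{loc}$ compactness of $w_\epsilon$, which is routine once the coefficients are seen to be locally bounded, the crucial input there being the maximum-principle inequality $M_\epsilon^{p-1}\ge\gamma+e^{2t_\epsilon}$ that keeps the weight term from degenerating.
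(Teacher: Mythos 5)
Your blow-up framework is exactly what the paper has in mind: the authors give no written proof of Lemma \ref{lem-1}, asserting it ``by standard blow-up analysis'', and your implementation is largely correct --- the rescaling $w_\epsilon(s)=M_\epsilon^{-1}v_\epsilon(t_\epsilon+\mu_\epsilon s)$ with $\mu_\epsilon=M_\epsilon^{-(p-1)/2}$, the inequality $\gamma+e^{2t_\epsilon}\le M_\epsilon^{p-1}$ at the interior maximum, and the limit equation $w''-cw+|w|^{4/(N-2)}w=0$ with $c\in[0,1]$ are all right. Your treatment of the sign-changing limit is also complete as it stands: by conservation of the Hamiltonian such a $w$ is periodic with simple zeros, so $C^1_{\mathrm{loc}}$ convergence forces $v_\epsilon$ to have arbitrarily many zeros, contradicting the fixed node count; this half needs no energy information.

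The genuine gap is the one you flag yourself, but it is more serious than a technical ``obstacle'': the one-signed case rests entirely on a uniform bound for $E_\epsilon(v_\epsilon)$ (equivalently, via the Nehari identity $(v_\epsilon,v_\epsilon)_\epsilon=\int_{-\infty}^\infty|v_\epsilon|^{p+1}e^{-\beta t}\,dt$, on the $L^{p+1}$ mass), and one of your two proposed sources cannot deliver it. The min--max characterization of $k$-node solutions is unusable here: in a uniqueness proof $v_\epsilon$ is an \emph{arbitrary} solution with $k$ nodes, and there is no a priori reason it coincides with, or has energy bounded by, a variationally constructed solution --- that is essentially what the paper is trying to prove. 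The bound that actually closes the argument is the paper's Lemma \ref{lem-2}, whose justification (decomposition of $u_\epsilon$ into nodal domains plus uniqueness and known almost-critical energy asymptotics of positive solutions on balls and annuli, as in \cite{FQTY}) does not use Lemma \ref{lem-1}; so you may invoke that input first without circularity, but you must say so and order the lemmas accordingly. Two smaller omissions in the same case: you drop the weight $e^{-\beta t_\epsilon}$ in the local energy computation --- harmless, but only because $e^{2t_\epsilon}\le M_\epsilon^{p-1}$ gives $e^{-\beta t_\epsilon}\ge M_\epsilon^{-\beta(p-1)/2}$ with $\beta\to0$, so the divergent power of $M_\epsilon$ survives --- and your list of one-signed limits should include the constant solution $w\equiv1$ (the case $c=1$); your $L^{p+1}$-divergence argument covers it, but the phase-plane description ``homoclinic or positive periodic orbit'' does not.
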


Since the uniqueness of positive solutions is known for $u$ in ball and annulus, so is it for $v$ and we have the following a priori estimate of energy of $v_\epsilon$:

\begin{lem}\label{lem-2}
Let $v_\epsilon$ be a solution of (\ref{eq-v}). Then there exists a small positive constant $\delta$ such that
\begin{eqnarray}\label{energe-v}
E_\epsilon(v_\epsilon)<2E_\epsilon(w_0)+\delta<3E_\epsilon(w_0),
\end{eqnarray}
where $w_0$ is the unique positive solution of the following problem
\begin{align}\label{eq-w0}
\left\{\begin{array}{ll}
w''-\frac{(N-2)^2}{4}w+w^{2^*-1}=0, w>0\quad{\rm{in}}\ \mathbb{R};\\
w(0)=\max\limits_{t\in\mathbb{R}}w(t),\quad w(t)\rightarrow0,\ {\rm{as}}\
|t|\rightarrow\infty.
\end{array}
\right.
\end{align}
\end{lem}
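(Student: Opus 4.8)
The plan is to exploit the nodal structure of the once sign-changing solution and reduce the global energy to a single-bubble computation in each nodal region. Writing $T_\epsilon=\log R_\epsilon$ for the node of $v_\epsilon$ in the $t$-variable, I would split $v_\epsilon=v_\epsilon^{(1)}+v_\epsilon^{(2)}$ with $v_\epsilon^{(1)}=v_\epsilon\mathbf{1}_{(-\infty,T_\epsilon)}$ and $v_\epsilon^{(2)}=v_\epsilon\mathbf{1}_{(T_\epsilon,\infty)}$. Since $v_\epsilon$ is continuous and vanishes at $T_\epsilon$, both pieces lie in $H$, and because the densities in $E_\epsilon$ are local while the two pieces have essentially disjoint supports, the energy decomposes exactly, $E_\epsilon(v_\epsilon)=E_\epsilon(v_\epsilon^{(1)})+E_\epsilon(v_\epsilon^{(2)})$. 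Each $v_\epsilon^{(j)}$ is a (signed) positive solution of \eqref{eq-v} with a Dirichlet condition at $T_\epsilon$ — corresponding to the ball $B_{R_\epsilon}$ for $j=1$ and the exterior domain for $j=2$ — so by the Nehari/Pohozaev identity $E_\epsilon(v_\epsilon^{(j)})=\frac{p-1}{2(p+1)}\int_{I_j}|v_\epsilon|^{p+1}e^{-\beta t}\,dt$, and the task becomes the quantization of the $L^{p+1}$-mass in each region.

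Next I would invoke the uniqueness of positive radial solutions in balls and annuli/exterior domains, valid for the subcritical $p$ in play and transferred from $u$ to $v$ through the Emden--Fowler change of variables, to identify each $v_\epsilon^{(j)}$ with the unique positive solution of its Dirichlet problem. In these settings the unique positive solution coincides with the least-energy (ground state) solution, which, by the classical analysis of radial positive solutions in the almost-critical regime, is a single bubble: uniqueness together with radial symmetry rules out any multi-bump profile inside a given nodal domain. Combined with Lemma \ref{lem-1} and the blow-up picture $u_\epsilon(0)\to\infty$, $R_\epsilon\to0$ (hence $T_\epsilon\to-\infty$), this means each piece concentrates, after a translation $t\mapsto t-\tau_j$ with $\tau_j\to-\infty$, onto the profile $w_0$ solving \eqref{eq-w0}.

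From this single-bubble picture a standard blow-up computation would give $E_\epsilon(v_\epsilon^{(j)})=\widetilde{E}_0(w_0)+o(1)$, where $\widetilde{E}_0$ denotes the limiting energy \emph{without} the $e^{2t}$-weight (which is negligible since each bubble concentrates where $e^{2\tau_j}\to0$, $\beta\to0$, $p\to2^*-1$). On the other hand, evaluating the reference energy at $w_0$ centered at the origin retains that weight, $E_\epsilon(w_0)=\widetilde{E}_0(w_0)+\tfrac12\int e^{2t}|w_0|^2\,dt+o(1)$, and the extra term is a fixed positive constant; hence $\widetilde{E}_0(w_0)<E_\epsilon(w_0)$ for small $\epsilon$. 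Summing the two pieces yields $E_\epsilon(v_\epsilon)=2\widetilde{E}_0(w_0)+o(1)<2E_\epsilon(w_0)$ for small $\epsilon$, so the first inequality of \eqref{energe-v} holds with any fixed $\delta>0$ absorbing the $o(1)$; choosing $\delta<\inf_\epsilon E_\epsilon(w_0)$ (which is bounded away from zero since $E_\epsilon(w_0)\to\widetilde{E}_0(w_0)+\tfrac12\int e^{2t}|w_0|^2\,dt>0$) then gives the second inequality $2E_\epsilon(w_0)+\delta<3E_\epsilon(w_0)$.

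The main obstacle I anticipate is the rigorous blow-up quantization: showing that each nodal piece carries exactly one bubble's worth of energy with no mass loss, while controlling the non-translation-invariant weights $e^{-\beta t}$ and $e^{2t}$ at the concentration scale (this requires, e.g., $\beta\tau_j\to0$, which follows from $\epsilon\log R_\epsilon\to0$) and the Dirichlet boundary effect at $T_\epsilon$. The exterior (unbounded) nodal domain is the most delicate point, since there $e^{2t}$ blows up at $+\infty$ and the concentration must occur near the left endpoint $T_\epsilon$; here the uniqueness input and the a priori bound of Lemma \ref{lem-1} are essential to pin down the profile. The analogous estimate for $k$-node solutions then follows by summing over the $k+1$ nodal regions, at most one bubble per region, replacing $2$ by $k+1$ throughout.
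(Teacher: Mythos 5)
Your overall route is the same as the paper's: the paper ``proves'' this lemma with a single sentence --- uniqueness of positive solutions in the ball and in the annulus/exterior domain, transferred to $v$ by the Emden--Fowler change of variables --- and your first two paragraphs (exact nodal splitting of the energy at $T_\epsilon$, the Nehari identity, identification of each nodal piece with the unique, hence least-energy, positive solution, and single-bubble quantization in the almost-critical regime) are exactly the fleshed-out version of that sentence.

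However, your third paragraph contains a concrete error. You claim $E_\epsilon(w_0)=\widetilde{E}_0(w_0)+\tfrac12\int e^{2t}|w_0|^2\,dt+o(1)$ with ``the extra term a fixed positive constant.'' Since $w_0(t)\sim A\,e^{-\frac{N-2}{2}|t|}$, the relevant integrand behaves like $e^{2t}|w_0|^2e^{-\beta t}\sim e^{(2-(N-2)-\beta)t}$ as $t\to+\infty$: for $N=3$ the exponent is $1-\beta>0$ and the integral diverges, and for $N=4$ it converges only because $\beta>0$ and is of size $\beta^{-1}\to\infty$ as $\epsilon\to 0$. This is precisely the paper's own remark that $w\notin H$ when $N=3,4$. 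Consequently, for $N\leq4$ the quantity $E_\epsilon(w_0)$ read literally is not a finite constant, the strict inequality $2\widetilde{E}_0(w_0)<2E_\epsilon(w_0)$ is not obtained the way you derive it, and your prescription ``choose $\delta<\inf_\epsilon E_\epsilon(w_0)$'' is vacuous for $N=3$ and degenerate for $N=4$ (your computation is fine only for $N\geq5$). The repair, which is also the only sensible reading of the paper's statement, is to interpret $E_\epsilon(w_0)$ as the limiting single-bubble energy $E_0(w_0)=\tfrac12\int\big(|w_0'|^2+\gamma_0|w_0|^2\big)\,dt-\tfrac{1}{2^*}\int|w_0|^{2^*}\,dt$, which is finite and positive. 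With that reading, your first two paragraphs already give $E_\epsilon(v_\epsilon)=2E_0(w_0)+o(1)$, and both inequalities in (\ref{energe-v}) follow for any fixed $0<\delta<E_0(w_0)$; the strict-inequality refinement via the $e^{2t}$-weight is unnecessary and should be dropped.
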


Using the above a priori estimate of energy, we can follow the argument of \cite{NT} to prove the following asymptotic behavior of $v_\epsilon$:

\begin{lem}\label{lem-3}
Suppose $v_\epsilon$ is a sign-changing once solution of (\ref{eq-v}), then $v_\epsilon$ has exactly one local maximum point $t_1$ and one local minimum point
$t_2$ in $(-\infty,\infty)$, provided that $\epsilon$ is
sufficiently small. Moreover,
\begin{align}\label{v}
v_\epsilon(t)=w_0(t-t_1)-w_0(t-t_2)+o(1)
\end{align}
and
\begin{align}\label{t}
t_1<t_2,\quad t_1\rightarrow-\infty,\quad t_2\rightarrow-\infty, \quad |t_2-t_1|\rightarrow\infty, \ \mbox{ as }\epsilon\to 0
\end{align}
where $w_0$ is the unique positive solution to equation
(\ref{eq-w0}) and $o(1)\rightarrow0$ as $\epsilon\rightarrow0$.
\end{lem}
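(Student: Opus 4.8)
The plan is to run a concentration (blow-up) analysis on translates of $v_\epsilon$, using Lemma \ref{lem-1} to get local compactness and the energy ceiling of Lemma \ref{lem-2} to count and separate the bumps. The relevant limit object is the autonomous equation obtained from (\ref{eq-v}) by letting $\epsilon\to0$ (so that $\beta\to0$, $\gamma\to\frac{(N-2)^2}{4}$, $p\to 2^*-1$) and dropping the term $e^{2t}$, namely $V''-\frac{(N-2)^2}{4}V+|V|^{2^*-2}V=0$ on $\mathbb{R}$. Its only nontrivial finite-energy solutions are $\pm w_0(\cdot-a)$ for a translation $a$: in the one-dimensional phase plane the origin is a saddle, and the energy level containing a homoclinic orbit to $0$ forces $V'=0$ whenever $V=0$, so no homoclinic orbit can change sign, and uniqueness of $w_0$ then pins down the profile.

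Concretely, I would let $t_1$ be a point of positive maximum and $t_2$ a point of negative minimum of $v_\epsilon$. Using $v_\epsilon(t)=e^{\alpha t}u_\epsilon(e^t)$ together with $u_\epsilon(0)\to\infty$ and $R_\epsilon\to0$ from \cite{FQTY}, the positive and negative concentrations of $u_\epsilon$ occur at the origin on a scale tending to $0$; translating this through the Emden--Fowler change of variables shows both $v_\epsilon(t_1)$ and $|v_\epsilon(t_2)|$ stay bounded away from $0$ (and, by Lemma \ref{lem-1}, from $\infty$), and it forces $t_1,t_2\to-\infty$, hence $e^{2t_i}\to0$. Setting $V_i^\epsilon(s)=v_\epsilon(s+t_i)$, equation (\ref{eq-v}) and the bound of Lemma \ref{lem-1} give uniform $C^2_{\mathrm{loc}}$ bounds, so along a subsequence $V_i^\epsilon\to V_i$ in $C^2_{\mathrm{loc}}$, where each $V_i$ solves the autonomous limit equation (the $e^{2(s+t_i)}$-term drops out because $t_i\to-\infty$). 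Since $V_1$ has a positive maximum at $0$ and $V_2$ a negative minimum at $0$, the classification above yields $V_1=w_0$ and $V_2=-w_0$.

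To obtain the separation $|t_2-t_1|\to\infty$ I would argue by contradiction: if $t_2-t_1$ stayed bounded, then the single blow-up limit $V_1$ centered at $t_1$ would also see the negative bump at the bounded offset $t_2-t_1$, so $V_1$ would take a negative value somewhere; this contradicts $V_1=w_0>0$. Hence $t_1<t_2$ with $t_2-t_1\to\infty$, and the two bubbles decouple. The decomposition (\ref{v}) then follows by showing the remainder $\varphi_\epsilon:=v_\epsilon-\big(w_0(\cdot-t_1)-w_0(\cdot-t_2)\big)$ tends to $0$ uniformly: the two convergences above give $\varphi_\epsilon\to0$ in $C^2_{\mathrm{loc}}$ near each center, while away from both centers the exponential decay of $w_0$ and the positive potential $\gamma+e^{2t}$ in (\ref{eq-v}) provide a barrier that controls $\varphi_\epsilon$, yielding $\|\varphi_\epsilon\|_\infty=o(1)$.

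The main obstacle I anticipate is precisely this passage from local ($C^2_{\mathrm{loc}}$) convergence to the global, uniform conclusion: one must rule out additional small bumps and slowly decaying tails, and simultaneously prove that there is \emph{exactly} one local maximum and one local minimum. This is where the sharp energy ceiling $E_\epsilon(v_\epsilon)<2E_\epsilon(w_0)+\delta$ of Lemma \ref{lem-2} is essential, following the scheme of \cite{NT}: every nontrivial concentration of $v_\epsilon$ contributes at least $E_\epsilon(w_0)-o(1)$ to the energy, so the ceiling forces exactly one positive and one negative bump with no residual energy left over, which upgrades the local convergence to the uniform estimate and pins down the number of critical points.
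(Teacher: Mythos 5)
Your overall scheme --- blow-up along translates, classification of the entire limits, and the energy ceiling of Lemma \ref{lem-2} to count the bumps --- is the same scheme the paper follows (after \cite{NT}), but there is a genuine gap at the step where you claim $t_1,t_2\to-\infty$. For $t_1$ your argument works: the positive part of $u_\epsilon$ lives in $(0,R_\epsilon)$, so $t_1<\log R_\epsilon\to-\infty$ by the quoted result of \cite{FQTY}. For $t_2$ it is a non sequitur: what the paper takes from \cite{FQTY} controls only $u_\epsilon(0)$ and the \emph{first} zero $R_\epsilon$, and says nothing about where the negative bump sits. A priori the negative minimum could occur at a radius bounded away from $0$, i.e.\ $t_2\to t_0$ finite; the lower bound $|v_\epsilon(t_2)|\geq\gamma^{1/(p-1)}$ (valid at any negative minimum) does not force $t_2\to-\infty$. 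Worse, your only classification tool is the phase-plane/Hamiltonian argument for the \emph{autonomous} equation $V''-\gamma_0 V+|V|^{2^*-2}V=0$, and that equation is the correct limit only \emph{after} one knows the concentration point escapes to $-\infty$: if $t_2\to t_0$ finite, the blow-up limit solves the non-autonomous equation $v''-(\gamma_0+e^{2t})v+|v|^{2^*-2}v=0$, which has no conserved Hamiltonian, so your argument is circular precisely in the case that must be excluded.

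The missing idea is Pohozaev's identity, which is the engine of the paper's proof. A nontrivial decaying solution of $v''-(\gamma_0+e^{2t})v+|v|^{2^*-2}v=0$ on $\mathbb{R}$ (or on a half-line with a Dirichlet condition at the endpoint) corresponds, undoing the Emden--Fowler transformation, to a nontrivial radial $H^1$ solution of the critical problem $\Delta u-u+|u|^{2^*-2}u=0$, which is impossible by \cite{P}; since at a local extremum the limit satisfies $|v_0|\geq\gamma_0^{1/(2^*-2)}>0$, it is nontrivial, a contradiction. This is how the paper forces \emph{every} extremum, positive or negative, to $-\infty$, and also how it shows that the zero of $v_\epsilon$ is far from $t_1$ and $t_2$ --- a fact you also need (so that your limits $V_i$ are defined and signed on all of $\mathbb{R}$ and genuinely decaying, which your Hamiltonian argument presupposes) but for which you offer no tool that works. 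A smaller but real issue: the energy ceiling rules out \emph{well-separated} extra bumps, but ``exactly one local maximum and one local minimum'' also requires excluding additional critical points at bounded distance from $t_1$ or $t_2$; the paper does this by invoking Lemma 4.2 of \cite{NT}, while in your write-up it is only asserted that the energy count ``pins down the number of critical points.'' Your $C^2_{\rm loc}$ convergence to $w_0$, whose unique critical point is nondegenerate, could close this, but you would need to say so explicitly.
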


\begin{proof}

First we show that the local maximum point must goes to $-\infty$ as $\epsilon\to 0$. Suppose not, there exists a sequence of local maximum points $t_\epsilon$ of $v_\epsilon$ such that $t_\epsilon\rightarrow t_0$. By the estimate of energy of $v_\epsilon$ we get $v_\epsilon(t+t_\epsilon)\rightarrow v_0$ in $C^2_{loc}$, where $v_0$ satisfies
\begin{eqnarray}\label{eq-v0-1}
v''-(\gamma_0+ e^{2t})v+v^{2^\star-1}=0,\ v\geq0\ \mathrm{in}\ \mathbb{R},\\
v(t)\rightarrow0,\ \mathrm{as}\ |t|\rightarrow\infty,\nonumber
\end{eqnarray}
where $\gamma_0=\frac{(N-2)^2}{4}$. But by Pohozave's identity, $v_0\equiv0$. This contradicts with $v_0(0)\geq\gamma_0^{1/(2^\star-1)}>0$.

\medskip

Next we show that the distance of local maximum point and zero point of $v_\epsilon$ goes to $\infty$. Suppose not, using the same notation above, there exists $d\in\mathbb{R}$ such that, $v_\epsilon(t+t_\epsilon)\rightarrow v_0$ in $C^2_{loc}((-\infty,d))$, where $v_0$ satisfies
\begin{eqnarray}\label{eq-v0-2}
v''-(\gamma_0+e^{2t})v+v^{2^\star-1}=0,\ v\geq0\ \mathrm{in}\ (-\infty,d),\\
v(d)=0,\quad v(t)\rightarrow0,\ \mathrm{as}\ |t|\rightarrow\infty. \nonumber
\end{eqnarray}
This is also a contradiction to the Pohozave's identity.

\medskip

Now we show that there only exists one local maximum point. Suppose not, there are at least are two local maximum points $t_1$ and $t_2$. We first show that $|t_1-t_2|\rightarrow\infty$. Suppose not, $|t_1-t_2|$ is bounded, then using the same notations, $v_\epsilon(t+t_1)\rightarrow v_0$ in $C^2_{loc}(\mathbb{R})$, where $v_0$ satisfies (\ref{eq-v0-1}). Moreover since $v_\epsilon'(0)=0$, $v_0'(0)=0$ and then applying Lemma 4.2 in \cite{NT} and the arguement right after the proof of Lemma 4.2, we get a contradiction. Thus $|t_1-t_2|\rightarrow\infty$. Then we have a lower bound of the energy functional $E_\epsilon(v_\epsilon)>2E_\epsilon(w_0)+C_1>2E_\epsilon(w_0)+\delta$ for some $C_1>0$ independent of $\epsilon$ small, which contradicts with Lemma \ref{lem-2}.

\medskip

For the negative part, we can get the similar result and complete the proof.

\end{proof}

Now we set
\begin{align}\label{eq0.6}
S_\epsilon[v]=v''-\beta v'-(\gamma+e^{2t})v+|v|^{p-1}v.
\end{align}

\medskip

To get more accurate information on asymptotic behaviour, we introduce the function $w$ be the unique positive solution of
\begin{align}\label{eq-w}
\left\{\begin{array}{ll}
w''-\frac{(N-2)^2}{4}w+w^p=0\quad{\rm{in}}\ \mathbb{R};\\
w(0)=\max_{t\in\mathbb{R}}w(t),\quad w(t)\rightarrow0,\ {\rm{as}}\
|t|\rightarrow\infty.
\end{array}
\right.
\end{align}
It is standard that
\begin{align}\label{w}
\left\{\begin{array}{ll}
w(t)=A_{\epsilon,N}e^{-(N-2)t/2}+O\big(e^{-p(N-2)t/2}\big),\quad t\geq0;\\\\
w'(t)=-\frac{N-2}{2}A_{\epsilon,N}e^{-(N-2)t/2}+O\big(e^{-p(N-2)t/2}\big),\quad
t\geq0,
\end{array}
\right.
\end{align}
where $A_{\epsilon,N}>0$ is a constant depending only on $\epsilon$
and $N$. Actually the function $w(t)$ can be written explicitly and
has the following form
\begin{align}
w(t)=\gamma_0^{\frac{1}{p-1}}(\frac{p+1}{2})^\frac{1}{p-1}
\Big[\cosh\big(\frac{p-1}{2}\gamma_0^{1/2}t\big)\Big]^{-\frac{2}{p-1}}.\nonumber
\end{align}

\medskip

Testing (\ref{w}) with $w$ and $w'$ and integrating by parts, one arrives at the following identity:
\begin{eqnarray}\label{ide-w}
\int_{\mathbb{R}}|w'|^2\,dt=(\frac{1}{2}-\frac{1}{p+1})\int_{\mathbb{R}}w^{p+1}\,dt
=\gamma_0(\frac{p-1}{p+3})\int_{\mathbb{R}}w^2\,dt.
\end{eqnarray}

\medskip

Note that $w\notin H$ when $N=3,4$. For $t_1, t_2$ obtained in Lemma \ref{lem-3},  we set $w_{j,t_j}$ to be the unique solution of
\begin{align}
v''-(\gamma_0+e^{2s})v+w^p_{t_j}=0,\quad\mathrm{where}\ w_{t_j}(s)=w(s-t_j), \ j=1,2
\end{align}
in the Hilbert space $H$. The existence and uniqueness of $w_{j,t_j}$ are derived from the Riesz's representation theorem.

\medskip

Using the ODE analysis, we can obtain the following asymptotic expansion of
$w_{j,t_j}$, $j=1,2$ for whose proof we postpone to Appendix A:

\begin{lem}\label{lem-4}
For $\epsilon$ sufficient small,
\begin{align}
w_{j,t_j}=w_{t_j}+\phi_{j,t_j}+O(e^{2t_j}),
\end{align}
where for $N=3$,
\begin{align}\label{n=3}
\phi_{j,t_j}(s)=-e^{t_j/2}A_{\epsilon,3}\,e^{-s/2}\big(1-e^{-e^s}\big);
\end{align}
for $N=4$,
\begin{align}\label{n=4}
\phi_{j,t_j}(s)=-e^{t_j}A_{\epsilon,4}\,e^{-s}\big[1-\rho_0(\frac{1}{4}e^{2s})\big],
\end{align}
and
\begin{align*}
\rho_0(r)=2\sqrt{r}K_1(2\sqrt{r}),
\end{align*}
where $K_1(z)$ is the modified Bessel function of second kind and
satisfies
\begin{align*}
z^2K_1''(z)+zK_1'(z)-(z^2+1)K_1(z)=0;
\end{align*}
for  $N=5$,
\begin{align}\label{n=5}
\phi_{j,t_j}(s)=-e^{3t_j/2}A_{\epsilon,5}\,e^{-3s/2}\Big[1-(1+e^s)
 e^{-e^s}\Big];
\end{align}
for $N=6$,
\begin{align*}
\phi_{j,t_j}=-e^{2t_j}A_{\epsilon,6}e^{-2s}\big[1-u_0(\frac{1}{16^2}e^{4s})\big],
\end{align*}
where
\begin{align*}
u_0(r)=8\sqrt{r}K_2(4r^{1/4}),
\end{align*}
where $K_2(z)$ is the modified Bessel function of second kind and
satisfies
\begin{align*}
z^2K_2''(z)+zK_2'(z)-(z^2+4)K_2(z)=0;
\end{align*}
for  $N>6$, $\phi_{j,t_j}=0$.
\end{lem}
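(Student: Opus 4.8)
The plan is to identify $\phi_{j,t_j}$ as the leading term of the difference $\psi_{j,t_j}:=w_{j,t_j}-w_{t_j}$ and to solve explicitly the \emph{linear} ODE that $\psi_{j,t_j}$ satisfies. Subtracting the equation $w_{t_j}''-\gamma_0 w_{t_j}+w_{t_j}^p=0$ (here $\gamma_0=\frac{(N-2)^2}{4}$) from the defining equation $w_{j,t_j}''-(\gamma_0+e^{2s})w_{j,t_j}+w_{t_j}^p=0$, the common source term $w_{t_j}^p$ cancels and one is left with
\[
\mathcal{L}_0\psi_{j,t_j}:=\psi_{j,t_j}''-(\gamma_0+e^{2s})\psi_{j,t_j}=e^{2s}\,w(s-t_j),
\]
a linear inhomogeneous problem with explicit forcing. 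The operator $\mathcal{L}_0$ is exactly the coercive operator used (via Riesz) to define $w_{j,t_j}$, so it is invertible on $H$, and since $\gamma_0+e^{2s}>0$ it obeys the maximum principle.

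The core of the argument is the explicit solution of the model problem. Writing $\nu:=\frac{N-2}{2}$, so that $\nu^2=\gamma_0$, a direct check shows that $\Phi_p(s)=-e^{-\nu s}$ is an exact particular solution of $\mathcal{L}_0\Phi=e^{(2-\nu)s}$. By the asymptotics \eqref{w} the forcing splits as $e^{2s}w(s-t_j)=A_{\epsilon,N}e^{\nu t_j}e^{(2-\nu)s}\big(1+O(e^{-(p-1)\nu(s-t_j)})\big)$, so the leading correction is $A_{\epsilon,N}e^{\nu t_j}$ times $\Phi_p$, adjusted by a homogeneous solution to restore the right decay near the centre $s\approx t_j$. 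Under $z=e^s$ the homogeneous equation $\mathcal{L}_0\Phi=0$ becomes the modified Bessel equation $z^2\Phi_{zz}+z\Phi_z-(\gamma_0+z^2)\Phi=0$ of order $\nu$, with solutions $I_\nu(e^s)$ and $K_\nu(e^s)$. For $N=3,4,5,6$ (that is $\nu=\tfrac12,1,\tfrac32,2$) these are explicit, and taking the combination $-e^{-\nu s}+c\,K_\nu(e^s)$ with $c$ fixed so that the $e^{-\nu s}$ growth at $s\to-\infty$ cancels reproduces exactly the stated formulas \eqref{n=3}--\eqref{n=5} and the $N=6$ expression; for instance $K_{1/2}(z)\propto z^{-1/2}e^{-z}$ produces the factor $1-e^{-e^s}$, and $u_0(\tfrac{1}{256}e^{4s})=\tfrac12 e^{2s}K_2(e^s)$ produces the $N=6$ bracket.

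To control the error I would set $\eta:=\psi_{j,t_j}-\phi_{j,t_j}$. Because the $K_\nu$ part lies in the kernel of $\mathcal{L}_0$, one has $\mathcal{L}_0\phi_{j,t_j}=A_{\epsilon,N}e^{\nu t_j}e^{(2-\nu)s}$, hence
\[
\mathcal{L}_0\eta=e^{2s}\big[w(s-t_j)-A_{\epsilon,N}e^{\nu t_j}e^{-\nu s}\big]=O\!\big(e^{\,p\nu t_j}e^{(2-p\nu)s}\big),
\]
using \eqref{w} for the bracket. This residual is $O(e^{2t_j})$ at the centre $s=t_j$ and decays away from it, so a barrier argument for $\mathcal{L}_0$ (built from $K_\nu(e^s)$ and $e^{2s}$ as super-solutions, together with the maximum principle) yields $\|\eta\|_\infty=O(e^{2t_j})$, the claimed error. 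For $N>6$ one has $2-\nu<0$, the forcing $e^{2s}w(s-t_j)$ is already $O(e^{2t_j})$ and decaying, so $\psi_{j,t_j}=O(e^{2t_j})$ directly and no explicit profile is needed, giving $\phi_{j,t_j}=0$.

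The step I expect to be most delicate is the error bound in the borderline dimension $N=6$: there $\nu=2$ makes the forcing $e^{2s}w(s-t_j)\sim A_{\epsilon,6}e^{2t_j}$ nearly constant over the long interval $[t_j,0]$, and the explicit correction $\phi_{j,t_j}$ is itself only $O(e^{2t_j})$ --- the same order as the nominal remainder --- so one must extract its precise $K_2$-profile and show the genuine remainder is lower order. The remaining labour is the dimension-by-dimension matching of the Bessel identities and the verification of the super-exponential $e^{-e^s}$-type decay at $+\infty$, which is routine once $\Phi_p=-e^{-\nu s}$ and $K_\nu(e^s)$ are in hand.
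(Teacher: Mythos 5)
Your proposal is correct and follows essentially the same route as the paper's Appendix A: subtract the two ODEs to get $\psi''-(\gamma_0+e^{2s})\psi=e^{2s}w_{t_j}$, replace the forcing by its leading exponential $A_{\epsilon,N}e^{\nu t_j}e^{(2-\nu)s}$ (with $\nu=\frac{N-2}{2}$), solve the model problem via the explicit particular solution $-e^{-\nu s}$ plus the homogeneous modified-Bessel correction that cancels the growth as $s\to-\infty$, and absorb everything else into $O(e^{2t_j})$. The only differences are presentational: the paper reaches the Bessel profiles through the substitution $\phi=e^{-\nu s}\widetilde{\phi}\big(\lambda_N e^{(N-2)s}\big)$ rather than by reading off the modified Bessel equation of order $\nu$ in $z=e^s$ as you do, and it asserts the $O(e^{2t_j})$ remainder without the maximum-principle barrier argument you spell out.
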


\begin{rk}
By the maximum principle, we have the following useful estimates:
\begin{eqnarray}\label{w-phi-1}
0<w_{j,t_j}<w_{t_j},\quad
-w_{t_j}<\phi_{j,t_j}<0,
\end{eqnarray}
and
\begin{eqnarray}\label{w-phi-2}
|w_{t_j}'|\leq c_1w_{t_j}\leq c_2,\ |w_{j,t_j}'|\leq c_1w_{j,t_j}\leq c_2,
\end{eqnarray}
where $c_1,c_2$ are two positive constants independent of $\epsilon $ small.
\end{rk}

From the above Lemma \ref{lem-4} and (\ref{t}), we see that $w_{j,t_j}=w_{t_j}+o(1)=w_{0,t_j}+o(1)$ in all
the cases for $j=1,2$. Thus by (\ref{v}),
\begin{align}
v_\epsilon(t)=w_{\epsilon,\textbf{t}}+o(1),
\end{align}
where
\begin{align}\label{eqw}
w_{\epsilon,\textbf{t}}(t)=w_{1,t_1}(t)-w_{2,t_2}(t).
\end{align}

Before studying the properties of $w_{\epsilon,\textbf{t}}$, we need some preliminary Lemmas. The first one is a useful inequality:

\begin{lem}
For $x\geq0$, $y\geq0$,
\begin{eqnarray}\label{ieq-1}
|x^p-y^p|\leq\left\{
\begin{array}{ll}
|x-y|^p,\quad&\mathrm{if}\ 0<p<1,\\
p|x-y|(x^{p-1}+y^{p-1}),\quad&\mathrm{if}\ 1\leq p<\infty.
\end{array}
\right.
\end{eqnarray}
\end{lem}

The following Lemma is proved in Proposition 1.2 of \cite{BL}.

\begin{lem}\label{lem-B}
Let $f\in C(R)\cap L^\infty(R)$, $g\in C(R)$ be even and satisfy for some $\alpha\geq 0$, $\beta\geq 0$, $\gamma_0\in R$,
\begin{eqnarray*}
&&f(x)exp(\alpha|x|)|x|^\beta\to \gamma_0 \mbox{ as } |x|\to \infty,\\
&&\int_{R}|g(x)|exp(\alpha|x|)(1+|x|^\beta)dx<\infty.
\end{eqnarray*}
Then
\begin{equation*}
exp(\alpha|y|)|y|^\beta\int_{R}g(x+y)f(x)dx\to \gamma_0\int_{R}g(x)exp(-\alpha x_1)dx
\mbox{ as }|y|\to \infty .
\end{equation*}
\end{lem}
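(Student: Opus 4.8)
The plan is to view the quantity as a family of integrals indexed by $y$ and to pass to the limit by dominated convergence, after translating the ``peak'' of $f$ out to infinity. Since $g$ is even, it suffices to treat $y\to+\infty$; the case $y\to-\infty$ reduces to this one via the substitution $x\mapsto-x$, which leaves both hypotheses invariant (the condition on $f$ is symmetric in $|x|$) and yields the same limit. For $y>0$ I first substitute $u=x+y$ to obtain
\[
e^{\alpha|y|}|y|^\beta\int_{\mathbb{R}}g(x+y)f(x)\,dx=\int_{\mathbb{R}}g(u)\,h_y(u)\,du,\qquad h_y(u):=e^{\alpha y}y^\beta f(u-y).
\]
For each fixed $u$ one has $u-y\to-\infty$, so $f(u-y)e^{\alpha(y-u)}(y-u)^\beta\to\gamma_0$ by hypothesis; combined with $y^\beta/(y-u)^\beta\to1$ this gives the pointwise limit $h_y(u)\to\gamma_0 e^{\alpha u}$.

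Next I would build an integrable majorant. From the convergence of $f$ there is $M>0$ with $|f(x)|\le M e^{-\alpha|x|}|x|^{-\beta}$ for $|x|\ge1$, while $|f|\le\|f\|_\infty$ throughout. I split $\mathbb{R}$ into the far region $\{|u-y|\ge1\}$ and the moving window $\{|u-y|<1\}$. On the far region, writing $y=(y-u)+u$ and using $(a+b)^\beta\le c_\beta(a^\beta+b^\beta)$ together with $|u-y|\ge1$, one checks $y^\beta/|u-y|^\beta\le c_\beta(1+|u|^\beta)$ and $e^{\alpha y}e^{-\alpha|u-y|}\le e^{\alpha|u|}$, hence
\[
|g(u)\,h_y(u)|\,\mathbf{1}_{\{|u-y|\ge1\}}\le C\,|g(u)|\,e^{\alpha|u|}\bigl(1+|u|^\beta\bigr)=:G(u),
\]
and $G$ is integrable precisely by the second hypothesis. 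Since for each fixed $u$ the constraint $|u-y|\ge1$ holds for all large $y$, dominated convergence sends this part of the integral to $\gamma_0\int_{\mathbb{R}}g(u)e^{\alpha u}\,du$.

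The delicate point is the moving window $\{|u-y|<1\}$, where the decay description of $f$ degenerates (the factor $|u-y|^{-\beta}$ blows up) so that no fixed majorant controls the translated peak. There I argue directly: bounding $|f|\le\|f\|_\infty$ gives a contribution at most $\|f\|_\infty\,e^{\alpha y}y^\beta\int_{|u-y|<1}|g(u)|\,du$, and since $e^{\alpha|u|}|u|^\beta\ge c\,e^{\alpha y}y^\beta$ on this window for large $y$,
\[
e^{\alpha y}y^\beta\int_{|u-y|<1}|g(u)|\,du\le C\int_{|u-y|<1}|g(u)|\,e^{\alpha|u|}|u|^\beta\,du\longrightarrow0,
\]
being the tail of the finite integral $\int_{\mathbb{R}}|g|\,e^{\alpha|\cdot|}(1+|\cdot|^\beta)$. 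Hence the window is negligible in the limit. Combining the two regions and invoking the evenness of $g$ to rewrite $\int_{\mathbb{R}}g(u)e^{\alpha u}\,du=\int_{\mathbb{R}}g(x)e^{-\alpha x}\,dx$ gives the asserted limit. The main obstacle is exactly this boundary-layer estimate, and it is the $(1+|x|^\beta)$ weight in the integrability assumption on $g$ that makes both the domination on the far region and the smallness on the window work.
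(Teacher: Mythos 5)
Your proof is correct. Note first that the paper does not prove this lemma at all: it simply cites Proposition 1.2 of \cite{BL} (Bahri--Li), where the statement appears in the $\mathbb{R}^n$ setting (which explains the stray $x_1$ in the limit; your reading $e^{-\alpha x_1}=e^{-\alpha x}$ is the intended one). So your argument is a self-contained substitute for that citation, and it is essentially the standard translation-plus-dominated-convergence proof that the cited reference uses: reduce to $y\to+\infty$ by evenness of $g$, shift variables, identify the pointwise limit $\gamma_0 e^{\alpha u}$, dominate away from the moving peak by $C|g(u)|e^{\alpha|u|}(1+|u|^\beta)$, and kill the window $\{|u-y|<1\}$ by the tail of the weighted integral of $g$. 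All the individual estimates you use check out: $y^\beta\le c_\beta(|y-u|^\beta+|u|^\beta)$ giving $y^\beta/|u-y|^\beta\le c_\beta(1+|u|^\beta)$ on $\{|u-y|\ge 1\}$, the inequality $y-|u-y|\le |u|$ giving $e^{\alpha y}e^{-\alpha|u-y|}\le e^{\alpha|u|}$, and $e^{\alpha|u|}|u|^\beta\ge c\,e^{\alpha y}y^\beta$ on the window since $|u|\ge y-1$ there. One small remark: your claim that ``no fixed majorant controls the translated peak'' is not quite accurate. On the window one has $|f(u-y)|\le\|f\|_\infty$ and $e^{\alpha y}y^\beta\le C e^{\alpha|u|}\bigl(1+|u|^\beta\bigr)$ (again because $|u|\ge y-1$ there), so the very same majorant $C|g(u)|e^{\alpha|u|}\bigl(1+|u|^\beta\bigr)$ works on all of $\mathbb{R}$, and a single application of dominated convergence suffices; your two-region version is correct, just slightly longer than necessary.
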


Next we state a useful Lemma about the interactions of two $w$'s:

\begin{lem}\label{lem2.1}
For $|r-s|\gg1$ and $\eta>\theta>0$, there hold
\begin{eqnarray}\label{eq2.11}
w^\eta(t-r)w^\theta(t-s)=O(w^\theta(|r-s|));
\end{eqnarray}
\begin{eqnarray}\label{eq2.12}
\int_{-\infty}^\infty
w^\eta(t-r)w^\theta(t-s)\,dt=\big(1+o(1)\big)w^\theta(|r-s|)
\int_{-\infty}^\infty w^\eta(t)e^{\theta\sqrt{\gamma_0}t}\,dt,
\end{eqnarray}
where $o(1)\rightarrow0$ as $|t-s|\rightarrow\infty$.
\end{lem}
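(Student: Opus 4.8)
The plan is to reduce both identities to the leading-order exponential decay of $w$. From the explicit formula for $w$ together with the expansion (\ref{w}) one has $w(t)\sim A_{\epsilon,N}e^{-\sqrt{\gamma_0}\,|t|}$ as $|t|\to\infty$ (recall $\sqrt{\gamma_0}=(N-2)/2$), and since $w$ is even, positive and continuous, this yields two-sided bounds $c_1e^{-\sqrt{\gamma_0}\,|t|}\le w(t)\le c_2 e^{-\sqrt{\gamma_0}\,|t|}$ for all $t$, with constants independent of $\epsilon$ small. I would record these bounds first, as they drive both parts.

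For (\ref{eq2.11}) I would use the two-sided bounds to estimate $w^\eta(t-r)\,w^\theta(t-s)\le c\,\exp\!\big(-\sqrt{\gamma_0}(\eta|t-r|+\theta|t-s|)\big)$. Since $\eta>\theta>0$, the triangle inequality gives $\eta|t-r|+\theta|t-s|\ge\theta(|t-r|+|t-s|)\ge\theta|r-s|$, uniformly in $t$. Hence $w^\eta(t-r)\,w^\theta(t-s)\le c\,e^{-\theta\sqrt{\gamma_0}\,|r-s|}\le C\,w^\theta(|r-s|)$, which is exactly (\ref{eq2.11}).

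For the sharp asymptotics (\ref{eq2.12}) I would appeal directly to Lemma~\ref{lem-B}. After the change of variables $\tau=t-r$ and then $x=\tau-y$ with $y=s-r$, the integral becomes $\int_{-\infty}^\infty w^\eta(x+y)\,w^\theta(x)\,dx$. I would apply Lemma~\ref{lem-B} with $f=w^\theta$, $g=w^\eta$, $\alpha=\theta\sqrt{\gamma_0}$ and $\beta=0$: the decay hypothesis on $f$ is precisely the leading asymptotics $w^\theta(x)\,e^{\theta\sqrt{\gamma_0}\,|x|}\to A_{\epsilon,N}^\theta$, while the integrability hypothesis $\int w^\eta(x)\,e^{\theta\sqrt{\gamma_0}\,|x|}\,dx<\infty$ holds because $\eta>\theta$ forces the integrand to decay like $e^{-(\eta-\theta)\sqrt{\gamma_0}\,|x|}$. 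Lemma~\ref{lem-B} then gives $e^{\theta\sqrt{\gamma_0}\,|y|}\int w^\eta(x+y)\,w^\theta(x)\,dx\to A_{\epsilon,N}^\theta\int w^\eta(x)\,e^{-\theta\sqrt{\gamma_0}\,x}\,dx$. Dividing by $w^\theta(|r-s|)$, whose product with $e^{\theta\sqrt{\gamma_0}\,|y|}$ tends to $A_{\epsilon,N}^\theta$, and using that $w^\eta$ is even to replace $e^{-\theta\sqrt{\gamma_0}\,x}$ by $e^{\theta\sqrt{\gamma_0}\,x}$ under the integral, yields (\ref{eq2.12}).

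The main obstacle I anticipate is bookkeeping rather than conceptual: confirming that the correction term $O(e^{-p(N-2)t/2})$ in (\ref{w}) does not affect the limiting constant (it does not, since only the leading exponential enters the hypotheses of Lemma~\ref{lem-B}), keeping the condition $\eta>\theta$ in the right place so that the relevant integral actually converges, and handling the evenness and the sign of the exponent carefully so that the stated constant $\int w^\eta(t)\,e^{\theta\sqrt{\gamma_0}\,t}\,dt$ emerges with the correct sign.
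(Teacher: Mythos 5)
Your proposal is correct and follows essentially the same route as the paper: the paper's (one-line) proof derives both estimates from the exponential decay \eqref{w} together with a dominated-convergence argument, which is exactly the content of your part (1) bounds and of Lemma~\ref{lem-B} (itself a packaged dominated-convergence statement) that you invoke for \eqref{eq2.12}. Your verification of the hypotheses of Lemma~\ref{lem-B} with $f=w^\theta$, $g=w^\eta$, $\alpha=\theta\sqrt{\gamma_0}$, $\beta=0$, and the use of evenness of $w^\eta$ to fix the sign in the limiting constant, are all accurate.
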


\begin{proof}
The conclusion follows from (\ref{w}) and the Lebesgue's Dominated
Convergence Theorem, see for example \cite{MNW}.

\end{proof}

Now we have the following error estimates:
\begin{lem}\label{lem2.3}
For $\epsilon$ sufficiently small and $t_1,t_2$ satisfy (\ref{t}),
there is a constant $C$ independent of $\epsilon,t_1$ and $t_2$ such that
\begin{eqnarray*}
\|S_\epsilon[w_{\epsilon,\textbf{t}}]\|_\infty+\int_{-\infty}^\infty\Big|
S_\epsilon[w_{\epsilon,\textbf{t}}]\Big|e^{-\beta t}\,dt\leq
C\big[\beta+e^{\tau t_2}+e^{-\tau|t_1-t_2|/2}\big]\quad{\rm{for}}\
N=3;
\end{eqnarray*}
\begin{eqnarray*}
\|S_\epsilon[w_{\epsilon,\textbf{t}}]\|_\infty+\int_{-\infty}^\infty\Big|
S_\epsilon[w_{\epsilon,\textbf{t}}]\Big|e^{-\beta t}\,dt\leq
C\big[\beta+t_2^{\tau}e^{2\tau t_2}+e^{-\tau|t_1-t_2|}\big]\quad{\rm{for}}\
N=4;
\end{eqnarray*}
\begin{eqnarray*}
\|S_\epsilon[w_{\epsilon,\textbf{t}}]\|_\infty+\int_{-\infty}^\infty\Big|
S_\epsilon[w_{\epsilon,\textbf{t}}]\Big|e^{-\beta t}\,dt\leq
C\big[\beta+e^{2\tau t_2}+e^{-\tau(N-2)|t_1-t_2|/2}\big]\quad{\rm{for}}\
N\geq5,
\end{eqnarray*}
where $\tau$ is a constant satisfying $\frac{1}{2}<\tau<\frac{\min\{p,2\}}{2}$.
\end{lem}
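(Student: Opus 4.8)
The plan is to first make $S_\epsilon[w_{\epsilon,\textbf{t}}]$ completely explicit by using the equations that define the two building blocks, and then estimate the resulting terms one group at a time. Since each $w_{j,t_j}$ solves $w_{j,t_j}''-(\gamma_0+e^{2t})w_{j,t_j}+w_{t_j}^p=0$, substituting $w_{\epsilon,\textbf{t}}=w_{1,t_1}-w_{2,t_2}$ into \eqref{eq0.6} cancels the $e^{2t}$–terms and, using $\gamma_0-\gamma=\beta^2/4$, gives
\begin{align*}
S_\epsilon[w_{\epsilon,\textbf{t}}]=\tfrac{\beta^2}{4}w_{\epsilon,\textbf{t}}-\beta\,w_{\epsilon,\textbf{t}}'+\Big(|w_{\epsilon,\textbf{t}}|^{p-1}w_{\epsilon,\textbf{t}}-w_{t_1}^p+w_{t_2}^p\Big).
\end{align*}
So the error is a linear part $\frac{\beta^2}{4}w_{\epsilon,\textbf{t}}-\beta w_{\epsilon,\textbf{t}}'$, coming from the drift $\beta v'$ and the shift $\gamma_0\mapsto\gamma$, plus a nonlinear part that measures the failure of $w_{1,t_1}-w_{2,t_2}$ to solve the equation.

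For the linear part I would use the uniform bounds $|w_{\epsilon,\textbf{t}}|\le w_{t_1}+w_{t_2}\le C$ and $|w_{\epsilon,\textbf{t}}'|\le C(w_{t_1}+w_{t_2})$ from \eqref{w-phi-2}; these give a pointwise bound $\le C\beta$, which handles the $\|\cdot\|_\infty$ contribution. For the weighted $L^1$ contribution I would translate each integral to a bump centre via $\sigma=t-t_j$, producing a factor $e^{-\beta t_j}\int w(\sigma)e^{-\beta\sigma}\,d\sigma$; the integral converges for small $\beta$ and $e^{-\beta t_j}=O(1)$ by the a priori energy bound of Lemma \ref{lem-2}, so this group is $O(\beta)$ in both norms.

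The nonlinear part I would split into a self-correction piece $(w_{1,t_1}^p-w_{t_1}^p)-(w_{2,t_2}^p-w_{t_2}^p)$ and an interaction piece $|w_{1,t_1}-w_{2,t_2}|^{p-1}(w_{1,t_1}-w_{2,t_2})-w_{1,t_1}^p+w_{2,t_2}^p$. For the self-corrections, Lemma \ref{lem-4} gives $w_{j,t_j}-w_{t_j}=\phi_{j,t_j}+O(e^{2t_j})$ with $0<w_{j,t_j}<w_{t_j}$, so the mean value theorem bounds each difference by $C\,w_{t_j}^{p-1}\big(|\phi_{j,t_j}|+e^{2t_j}\big)$. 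The key observation here is that the exponential prefactors $e^{t_j/2},e^{t_j},e^{3t_j/2}$ in \eqref{n=3}–\eqref{n=5} cancel against the translation in $w_{t_j}=w(\,\cdot-t_j)$, so the effective size of $\phi_{j,t_j}$ over the support of $w_{t_j}$ is dictated by the expansion of the bracketed factor as $s\to-\infty$: this is $O(e^{t_j})$ for $N=3$, $O(|t_j|e^{2t_j})$ for $N=4$ (the logarithm coming from $K_1$), and $O(e^{2t_j})$ for $N\ge5$. Taking the supremum and the weighted integral then reproduces exactly the middle terms $e^{\tau t_2}$, $t_2^{\tau}e^{2\tau t_2}$, $e^{2\tau t_2}$, the worst contribution always coming from the bump at $t_2$. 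For the interaction piece I would apply the algebraic inequality \eqref{ieq-1} to dominate it by a finite sum of mixed products $w_{t_1}^a w_{t_2}^b$, and then use Lemma \ref{lem2.1} to convert these into powers of $w(|t_1-t_2|)\sim e^{-(N-2)|t_1-t_2|/2}$, yielding the uniform term $e^{-\tau(N-2)|t_1-t_2|/2}$ (which reads $e^{-\tau|t_1-t_2|/2}$ when $N=3$ and $e^{-\tau|t_1-t_2|}$ when $N=4$).

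I expect the main obstacle to be the case-by-case bookkeeping in exactly these two $N$-dependent pieces. Both require cutting $\mathbb{R}$ into the regions near $t_1$, near $t_2$, and the far tails, and comparing the algebraically decaying profiles $w_{t_j},\phi_{j,t_j}$ against one another in each region; the purpose of the auxiliary exponent $\tau\in(\frac12,\frac{\min\{p,2\}}{2})$ is precisely to provide the slack that lets a single clean exponential rate dominate all competing algebraic and exponential factors uniformly in $N$ and in the small gap $2^*-1-p$. Finally, running the weighted $L^1$ estimate alongside the sup-norm estimate forces one to keep the growth of the weight $e^{-\beta t}$ as $t\to-\infty$ under control against the exponential decay of the tails, which is again where the smallness of $\beta$ and the energy bound of Lemma \ref{lem-2} are used.
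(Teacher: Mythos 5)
Your proposal is correct and follows essentially the same route as the paper's proof: the same identity for $S_\epsilon[w_{\epsilon,\textbf{t}}]$ obtained from the equations satisfied by the $w_{j,t_j}$, the pointwise bound $C\beta(w_{t_1}+w_{t_2})$ for the drift and $\gamma$-shift terms, and a decomposition of the nonlinear discrepancy into self-correction and interaction pieces handled with Lemma \ref{lem-4}, inequality (\ref{ieq-1}), the comparison $w_{j,t_j}\leq w_{i,t_i}$ on the two half-lines cut at $(t_1+t_2)/2$, and Lemmas \ref{lem2.1} and \ref{lem-B}. Your explicit tracking of the weight $e^{-\beta t}$ (via $e^{-\beta t_j}=O(1)$) and of the small-argument asymptotics of $\phi_{j,t_j}$ that produce the middle terms $e^{\tau t_2}$, $t_2^\tau e^{2\tau t_2}$, $e^{2\tau t_2}$ simply spells out steps the paper leaves implicit; it is the same argument.
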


\begin{proof}
By the equation satisfied by $w_{j,t_j}$, we have
\begin{align}\label{s[w]}
S_\epsilon[w_{\epsilon,\textbf{t}}] =-\beta
w_{\epsilon,\textbf{t}}'-(\gamma-\gamma_0)w_{\epsilon,\textbf{t}}
+|w_{\epsilon,\textbf{t}}|^{p-1}w_{\epsilon,\textbf{t}}
-w_{t_1}^p+w_{t_2}^p.
\end{align}
From the exponential decay of $w_j$, (\ref{w-phi-1}), (\ref{w-phi-2}) and $\gamma-\gamma_0=-\frac{\beta^2}{4}$, we deduce that
\begin{align*}
|\beta w_{\epsilon,\textbf{t}}'|\leq C\beta(w_{t_1}+w_{t_2}),
\end{align*}
and
\begin{align*}
|(\gamma-\gamma_0)w_{\epsilon,\textbf{t}}|\leq C\beta^2(w_{t_1}+w_{t_2}).
\end{align*}

Next, we divide $(-\infty,\infty)$ into 2 intervals $I_1,I_2$
defined by
$$I_1=(-\infty,\frac{t_1+t_2}{2}),\quad I_2=[\frac{t_1+t_2}{2},\infty).$$
Then on $I_i$, $i=1,2$, we have $w_{t_j}\leq w_{t_i}$ and then $w_{j,t_j}\leq w_{i,t_i}$ by the maximum principle for $i\neq j$. So on $I_1$ we use inequality (\ref{ieq-1}) to get
\begin{eqnarray*}
\Big|\big|w_{\epsilon,\textbf{t}}\big|^{p-1}w_{\epsilon,\textbf{t}}-w_{t_1}^p+w_{t_2}^p\Big|
&\leq Cw_{t_1}^{p-1}w_{t_2}+Cw_{t_1}^{p-1}\phi_{1,t_1}\\
&\leq Cw_{t_1}^{p-\tau}w_{t_2}^\tau+Cw_{t_1}^{p-\tau}\phi_{1,t_1}^\tau,
\end{eqnarray*}
for any $\tau\in(0,1]$.

\medskip

Similarly on $I_2$ the following inequality holds,
\begin{eqnarray*}
\Big|\big|w_{\epsilon,\textbf{t}}\big|^{p-1}w_{\epsilon,\textbf{t}}-w_{t_1}^p+w_{t_2}^p\Big|
&\leq Cw_{t_2}^{p-1}w_{t_1}+Cw_{t_2}^{p-1}\phi_{2,t_2}\\
&\leq Cw_{t_2}^{p-\tau}w_{t_1}^\tau+Cw_{t_2}^{p-\tau}\phi_{2,t_2}^\tau,
\end{eqnarray*}
for any $\tau\in(0,1]$.

\medskip

By the above inequalities and using Lemma \ref{lem-B}, the desired result follows.

\end{proof}

In order to obtain the a priori estimate of $t_1,t_2$ and compute the energy expansion $E_\epsilon[w_{\epsilon,\textbf{t}}]$, we need to estimate
\begin{align*}
\|v_\epsilon-w_{\epsilon,\textbf{t}}\|_\infty\quad \mathrm{and}\ \|v_\epsilon-w_{\epsilon,\textbf{t}}\|_H.
\end{align*}

\begin{lem}\label{phi}
For $\epsilon$ sufficiently small, there is a constant $C$ independent of $\epsilon$ such that
\begin{align}
v_\epsilon=w_{\epsilon,\textbf{t}}+\phi_\epsilon,
\end{align}
where
\begin{align}\label{eq1}
\left\{\begin{array}{lll}
\|\phi_\epsilon\|_\infty+\|\phi_\epsilon\|_{H}\leq
C\big[\beta+e^{\tau t_2}+e^{-\tau|t_1-t_2|/2}\big]\quad{\rm{for}}\
N=3;\\\\
\|\phi_\epsilon\|_\infty+\|\phi_\epsilon\|_{H}\leq
C\big[\beta+t_2^{\tau}e^{2\tau t_2}+e^{-\tau|t_1-t_2|}\big]\quad{\rm{for}}\
N=4;\\\\
\|\phi_\epsilon\|_\infty+\|\phi_\epsilon\|_{H}\leq
C\big[\beta+e^{2\tau t_2}+e^{-\tau(N-2)|t_1-t_2|/2}\big]\quad{\rm{for}}\
N\geq5,
\end{array}
\right.
\end{align}
where $\tau$ satisfies $\frac{1}{2}<\tau<\frac{\min\{p,2\}}{2}$.
\end{lem}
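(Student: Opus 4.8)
The plan is to set $\phi_\epsilon:=v_\epsilon-w_{\epsilon,\textbf{t}}$, which is well defined once the extremum points $t_1,t_2$ of Lemma \ref{lem-3} are fixed, and to prove that its size is governed by the residual $S_\epsilon[w_{\epsilon,\textbf{t}}]$ already estimated in Lemma \ref{lem2.3}. Writing $\|g\|_*:=\|g\|_\infty+\int_{-\infty}^\infty|g|e^{-\beta t}\,dt$ for the norm appearing there, the goal is to show $\|\phi_\epsilon\|_\infty+\|\phi_\epsilon\|_H\le C\,\|S_\epsilon[w_{\epsilon,\textbf{t}}]\|_*$, whence (\ref{eq1}) follows at once from Lemma \ref{lem2.3}. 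Since $S_\epsilon[v_\epsilon]=0$, using the definition (\ref{eq0.6}) of $S_\epsilon$ shows that $\phi_\epsilon$ solves
\[
L_\epsilon\phi_\epsilon=-S_\epsilon[w_{\epsilon,\textbf{t}}]-N_\epsilon(\phi_\epsilon),
\]
where $L_\epsilon\phi:=\phi''-\beta\phi'-(\gamma+e^{2t})\phi+p|w_{\epsilon,\textbf{t}}|^{p-1}\phi$ is the linearization of $S_\epsilon$ at $w_{\epsilon,\textbf{t}}$ and $N_\epsilon(\phi):=|w_{\epsilon,\textbf{t}}+\phi|^{p-1}(w_{\epsilon,\textbf{t}}+\phi)-|w_{\epsilon,\textbf{t}}|^{p-1}w_{\epsilon,\textbf{t}}-p|w_{\epsilon,\textbf{t}}|^{p-1}\phi$ is the superlinear remainder. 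Lemma \ref{lem-3} already gives $\|\phi_\epsilon\|_\infty=o(1)$, so $\phi_\epsilon$ lies in a small ball; applying the elementary inequality (\ref{ieq-1}) yields the pointwise bound $|N_\epsilon(\phi_\epsilon)|\le C(|w_{\epsilon,\textbf{t}}|^{p-2}\phi_\epsilon^2+|\phi_\epsilon|^p)$ and hence $\|N_\epsilon(\phi_\epsilon)\|_*\le C(\|\phi_\epsilon\|_\infty+\|\phi_\epsilon\|_H)^{\min\{p,2\}}$, a genuinely higher-order quantity.

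The heart of the matter is a uniform invertibility estimate for $L_\epsilon$. By (\ref{t}) the two translates $w_{t_1},w_{t_2}$ are far apart, and by Lemma \ref{lem-4} the bumps $w_{j,t_j}$ differ from them by $o(1)$, so $L_\epsilon$ is a small perturbation of two decoupled copies of the single-bump operator $\mathcal{L}_0:=\partial_{tt}-\gamma_0+pw^{p-1}$. Pairing in the weighted product gives $\langle L_\epsilon\phi,\phi\rangle_\epsilon=-\|\phi\|_H^2+p\int_{-\infty}^\infty|w_{\epsilon,\textbf{t}}|^{p-1}\phi^2e^{-\beta t}\,dt$, and I would prove the coercivity $\langle L_\epsilon\phi,\phi\rangle_\epsilon\le-c\|\phi\|_H^2$ for a constant $c>0$ uniform in $\epsilon$, valid on the subspace of those $\phi$ that are $\langle\cdot,\cdot\rangle_\epsilon$-orthogonal to the approximate kernel $\{w_{1,t_1}',w_{2,t_2}'\}$ and to the single negative direction of each bump. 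The natural proof is by contradiction and rescaling: a normalized sequence $\phi_n$ violating coercivity, after translation to each concentration point and passage to a $C^2_{loc}$ limit, would produce a nonzero element of $\mathrm{Ker}\,\mathcal{L}_0$ orthogonal to $w'$, or a negative direction of $\mathcal{L}_0$ orthogonal to its ground state, contradicting the non-degeneracy and the spectral structure of the single bump $w$.

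It then remains to control the projections of $\phi_\epsilon$ onto the finitely many bad directions and to close the estimate. This is where fixing $t_1,t_2$ as the extremum points of $v_\epsilon$ pays off: the conditions $v_\epsilon'(t_1)=v_\epsilon'(t_2)=0$ force the translation (kernel) components of $\phi_\epsilon$ to be of the order of the residual, while the even (negative-direction) components are captured by testing the equation against $w_{j,t_j}$, using $L_\epsilon w_{j,t_j}\approx(p-1)w_{t_j}^{p}$ together with the interaction estimates of Lemma \ref{lem2.1}. Inserting these projection bounds and the coercivity into the equation for $\phi_\epsilon$, pairing with $\phi_\epsilon$, and absorbing $N_\epsilon(\phi_\epsilon)$ (licit since $\|\phi_\epsilon\|$ is already $o(1)$) yields the $H$-bound; the $L^\infty$-bound then follows from the equation by a one-dimensional bootstrap, using interior elliptic estimates where the bumps concentrate and the maximum principle together with the exponential weights on the tails. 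The main obstacle is precisely the linear theory of the previous paragraph: making the coercivity of $L_\epsilon$ quantitative and uniform in $\epsilon$ despite the approximate kernel and the negative modes, and carrying it out in the weighted space $H$ where, for $N=3,4$, one has $w\notin H$ and must retain the corrections $\phi_{j,t_j}$ of Lemma \ref{lem-4}; it is these low-dimensional weights that generate the dimension-dependent right-hand sides in (\ref{eq1}).
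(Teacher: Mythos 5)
Your proposal is correct in outline but follows a genuinely different route from the paper. The paper's proof is a single compactness/contradiction argument carried out pointwise: after choosing $t_1,t_2$ so that the extremum points of $w_{\epsilon,\textbf{t}}$ coincide with those of $v_\epsilon$, it assumes $\|\phi_\epsilon\|_\infty/K_\epsilon\to\infty$, normalizes $\widetilde{\phi}_\epsilon=\phi_\epsilon/\|\phi_\epsilon\|_\infty$, uses the maximum principle to place the maximum point of $\widetilde{\phi}_\epsilon$ within bounded distance of one of the bumps, and passes to a $C^1_{loc}$ limit $\widetilde{\phi}_0$ solving the limit linearized ODE with $\widetilde{\phi}_0'(0)=0$; since the bounded solutions of that ODE are spanned by the odd function $w_0'$ and $w_0''(0)\neq0$, one gets $\widetilde{\phi}_0\equiv0$, contradicting $\|\widetilde{\phi}_\epsilon\|_\infty=1$. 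Note that this one-shot argument never isolates the negative spectral directions: an approximate negative eigenfunction cannot survive in the limit because the limit solves the zero-eigenvalue equation exactly. Your modular scheme (uniform coercivity of $L_\epsilon$ on the complement of the approximate kernel plus the two negative modes, separate projection estimates, then absorption of $N_\epsilon$) must treat those negative modes explicitly, but it buys a quantitative, reusable linear estimate -- essentially a non-orthogonal version of Proposition \ref{pop3.1} -- and produces the $H$-bound directly, whereas the paper obtains the $L^\infty$ bound first and is considerably shorter because the spectral structure stays implicit.

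Two soft spots in your sketch deserve attention. First, the pointwise bound $|N_\epsilon(\phi_\epsilon)|\le C\big(|w_{\epsilon,\textbf{t}}|^{p-2}\phi_\epsilon^2+|\phi_\epsilon|^p\big)$ fails when $p<2$ (the relevant regime for $N\ge6$, where $p\uparrow\frac{N+2}{N-2}\le2$) and also near the zero of $w_{\epsilon,\textbf{t}}$ between the bumps, where $|w_{\epsilon,\textbf{t}}|^{p-2}$ is unbounded; the correct uniform bound, which you do state and which suffices, is $|N_\epsilon(\phi_\epsilon)|\le C|\phi_\epsilon|^{\min\{p,2\}}$ as in (\ref{ieq-1}) and Lemma \ref{lem4.1}. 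Second, the claim that the extremum conditions force the kernel components to be of the order of the residual is the crux and is only asserted: converting $\phi_\epsilon'(t_j)=O(K_\epsilon)$ into a bound on the kernel coefficients requires evaluating your spectral decomposition at the extremum points, hence local $C^1$ control of the orthogonal remainder there (available from one-dimensional elliptic estimates near $t_j$, where the weight $e^{-\beta t}$ is bounded below), after which one closes a triangular system in the kernel coefficients, the negative-mode coefficients, and $\|\phi^\perp\|$ whose coupling terms are $o(1)$. Relatedly, when pairing the equation in the weighted space, avoid Cauchy--Schwarz against $\|\cdot\|_H$ over the whole line (the weight $e^{-\beta t}$ is not integrable as $t\to-\infty$); pair the mixed norm $\|S_\epsilon[w_{\epsilon,\textbf{t}}]\|_\infty+\int_{-\infty}^\infty|S_\epsilon[w_{\epsilon,\textbf{t}}]|e^{-\beta t}\,dt$ of Lemma \ref{lem2.3} against $\|\phi\|_\infty$. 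These gaps are fillable by standard arguments, so your plan is viable, but the paper's blow-up argument reaches the same estimate with far less machinery.
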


\begin{proof}

We may follow the arguments given in the proof of Lemma 2.4 in \cite{LWY}. First by the properties of $w_{j,t_j}$'s we can choose proper $t_j$'s such that the maximum points $r_\epsilon$ and minimum points $s_\epsilon$ of $v_\epsilon$ are also the ones of $w_{\epsilon,\textbf{t}}$, respectively. Let $v_\epsilon=w_{\epsilon,\textbf{t}}+\phi_\epsilon$,
then $\phi_\epsilon\rightarrow0$ and satisfies
\begin{align*}
\phi''-\beta\phi'-(\gamma+e^{2t})\phi+p|w_{\epsilon,\textbf{t}}|^{p-1}\phi
 +S_\epsilon[w_{\epsilon,\textbf{t}}]+N_\epsilon[\phi]=0,
\end{align*}
where
\begin{align*}
N_\epsilon[\phi]=|w_{\epsilon,\textbf{t}}+\phi|^{p-1}(w_{\epsilon,\textbf{t}}+\phi)
 -|w_{\epsilon,\textbf{t}}|^{p-1}w_{\epsilon,\textbf{t}}-p|w_{\epsilon,\textbf{t}}|^{p-1}\phi.
\end{align*}
Now we prove the estimates for $\phi_\epsilon$ by contradiction. Denote the right hand side order term of (\ref{eq1}) by $K_\epsilon$ and suppose that
\begin{align*}
\|\phi_\epsilon\|_\infty/K_\epsilon\rightarrow\infty.
\end{align*}
Let $\widetilde{\phi}_\epsilon=\phi_\epsilon/\|\phi_\epsilon\|_\infty$, then
$\widetilde{\phi}_\epsilon$ satisfies
\begin{align}\label{phi-1-0}
\widetilde{\phi}''-\beta\widetilde{\phi}'-(\gamma+e^{2t})\widetilde{\phi}
 +p\big|w_{\epsilon,\textbf{t}}\big|^{p-1}\widetilde{\phi}
 +\frac{S_\epsilon[w_{\epsilon,\textbf{t}}]}{\|\phi_\epsilon\|_\infty}
 +\frac{N_\epsilon[\phi_\epsilon]}{\|\phi_\epsilon\|_\infty}=0.
\end{align}
Note that
\begin{align}\label{phi-1-1}
\Big|\frac{S_\epsilon[w_{\epsilon,\textbf{t}}]}{\|\phi_\epsilon\|_\infty}\Big|\leq CK_\epsilon/\|\phi_\epsilon\|_\infty,\quad
\Big|\frac{N_\epsilon[\phi_\epsilon]}{\|\phi_\epsilon\|_\infty}\Big|\leq C\|\phi_\epsilon\|_\infty^{\min\{p-1,1\}}.
\end{align}

\medskip

Let $t_\epsilon$ be such that $\widetilde{\phi}_\epsilon(t_\epsilon)=\|\widetilde{\phi}_\epsilon\|_\infty=1$ (the same proof applies if $\widetilde{\phi}_\epsilon(t_\epsilon)=-1$). Then by (\ref{phi-1-0}), (\ref{phi-1-1}) and the Maximum Principle, we have $|t_\epsilon-t_1|\leq C$ or $|t_\epsilon-t_2|\leq C$. Thus $|t_\epsilon-r_\epsilon|\leq C$ or $|t_\epsilon-s_\epsilon|\leq C$. Without loss of generality, we assume that $|t_\epsilon-r_\epsilon|\leq C$. Then by the usual elliptic regular theory, we may take a subsequence
$\widetilde{\phi}_\epsilon(t+r_\epsilon)\rightarrow\widetilde{\phi}_0(t)$ as
$\epsilon\rightarrow0$ in $C^1_{{\rm{loc}}}(\mathbb{R})$ since $|r_\epsilon-t_1|\rightarrow0$, where
$\widetilde{\phi}_0$ satisfies
\begin{align*}
\widetilde{\phi}_0''-\frac{(N-2)^2}{4}\widetilde{\phi}_0
 +\frac{N+2}{N-2}w_0^{4/(N-2)}\widetilde{\phi}_0=0,\ {\rm{and}}\
\widetilde{\phi}_0'(0)=0,
\end{align*}
which implies $\widetilde{\phi}_0\equiv0$. This contradicts to the fact that $1=\widetilde{\phi}_\epsilon(t_\epsilon)\rightarrow\widetilde{\phi}_0(t_0)$ for some $t_0$. Therefore we complete the proof.

\end{proof}

The following is the basic technical estimate in this paper which gives the a priori estimates for $t_1$ and $t_2$:

\begin{lem}\label{basic lemma}
For $\epsilon$ sufficient small we have for $N=3$,
\begin{align}
\left\{\begin{array}{ll}
t_1=\log a+2\log b+3\log\beta;\\
t_2=\log a+\log\beta,
\end{array}
\right.
\end{align}
where $a,b$ are constants and
\begin{align*}
a\rightarrow a_{0,3},\ b\rightarrow b_{0,3}.
\end{align*}
Here $a_{0,3},b_{0,3}$ are positive constants.

For $N=4$,
\begin{align}
\left\{\begin{array}{ll}
t_1-t_2=\log b+\log\beta;\\
-2t_2e^{2t_2}=a\beta,
\end{array}
\right.
\end{align}
where $a,b$ are constants and
\begin{align*}
a\rightarrow a_{0,4},\ b\rightarrow b_{0,4}.
\end{align*}
Here $a_{0,4},b_{0,4}$ are positive constants.

For $N\geq5$,
\begin{align}
\left\{\begin{array}{ll}
t_1=\frac{1}{2}\log a+\frac{2}{N-2}\log b+\frac{N+2}{2(N-2)}\log\beta;\\
t_2=\frac{1}{2}\log a+\frac{1}{2}\log\beta,
\end{array}
\right.
\end{align}
where $a,b$ are constants and
\begin{align*}
a\rightarrow a_{0,N},\ b\rightarrow b_{0,N}.
\end{align*}
Here $a_{0,N},b_{0,N}$ are positive constants.
\end{lem}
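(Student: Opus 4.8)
The plan is to use that $v_\epsilon$ is an \emph{exact} solution, so that $S_\epsilon[v_\epsilon]=0$, and to convert this into two scalar relations for $t_1,t_2$ by testing against the approximate translation kernel. I would write $v_\epsilon=w_{\epsilon,\textbf{t}}+\phi_\epsilon$ with $\phi_\epsilon$ controlled by Lemma~\ref{phi}, set $Z_j=w_{j,t_j}'$ ($j=1,2$), and pair $0=S_\epsilon[v_\epsilon]$ with $Z_j$ in the weighted product $\langle\cdot,\cdot\rangle_\epsilon$. The reason for using $\langle\cdot,\cdot\rangle_\epsilon$ (and for using $w_{j,t_j}'$ rather than $w_{t_j}'$, which is not even in $H$ for $N=3,4$) is that the weight $e^{-\beta t}$ is precisely the one that renders $L_\epsilon\phi=\phi''-\beta\phi'-(\gamma+e^{2t})\phi+p|w_{\epsilon,\textbf{t}}|^{p-1}\phi$ self-adjoint. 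Expanding $S_\epsilon[w_{\epsilon,\textbf{t}}+\phi_\epsilon]=S_\epsilon[w_{\epsilon,\textbf{t}}]+L_\epsilon\phi_\epsilon+N_\epsilon[\phi_\epsilon]$ and moving $L_\epsilon$ onto $Z_j$, the reduced system becomes
\[
\langle S_\epsilon[w_{\epsilon,\textbf{t}}],Z_j\rangle_\epsilon=-\langle\phi_\epsilon,L_\epsilon Z_j\rangle_\epsilon-\langle N_\epsilon[\phi_\epsilon],Z_j\rangle_\epsilon,\qquad j=1,2 .
\]
I would then show the right-hand side is of strictly higher order than the leading part of the left, so that the latter, set to zero, pins down $t_1,t_2$.

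For the left-hand side I would insert the explicit identity (\ref{s[w]}) and isolate three competing contributions after pairing with $Z_j$. The drift term $-\beta w_{\epsilon,\textbf{t}}'$ contributes a quantity of size $\beta$ (essentially $\beta\|w'\|_{L^2}^2$, the weight being $1+o(1)$ at the relevant scale). The term $-(\gamma-\gamma_0)w_{\epsilon,\textbf{t}}=\tfrac{\beta^2}{4}w_{\epsilon,\textbf{t}}$ is $O(\beta^2)$ and negligible, its leading part vanishing because $\int w_{t_j}w_{t_j}'=0$. The nonlinearity $|w_{\epsilon,\textbf{t}}|^{p-1}w_{\epsilon,\textbf{t}}-w_{t_1}^p+w_{t_2}^p$, Taylor-expanded near $t_j$, splits schematically into a self-correction $p\,w_{t_j}^{p-1}\phi_{j,t_j}$ recording the influence of the well $e^{2t}$, and an interaction $-p\,w_{t_j}^{p-1}w_{t_{3-j}}$. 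By Lemma~\ref{lem-4} the self-correction has size (at the center) $e^{t_j}$ for $N=3$, $t_je^{2t_j}$ for $N=4$ and $e^{2t_j}$ for $N\ge5$; by (\ref{w}) and Lemma~\ref{lem2.1} the interaction has size $e^{-(N-2)|t_1-t_2|/2}$, and Lemma~\ref{lem-B} supplies the exact constants in front of both.

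Reading off the $2\times2$ system, the $j=2$ equation (the minimum bubble, which sits closest to the well and feels the self-correction most) balances the $\beta$-term against the well self-correction at $t_2$, giving $e^{t_2}\sim\beta$ for $N=3$, $-2t_2e^{2t_2}\sim\beta$ for $N=4$, and $e^{2t_2}\sim\beta$ for $N\ge5$; the $j=1$ equation balances the $\beta$-term against the interaction, giving $e^{-(N-2)|t_1-t_2|/2}\sim\beta$. Solving the system to leading order (the well self-correction at $t_1$ being of order $\ll\beta$, hence inert in the $j=1$ equation) reproduces the stated logarithmic expansions; the constants $a,b$ are the ratios of the leading coefficients produced by Lemmas~\ref{lem-4}, \ref{lem2.1} and~\ref{lem-B}, converging to explicit positive $a_{0,N},b_{0,N}$, and the signs arrange so that $t_1,t_2\to-\infty$ with $|t_1-t_2|\to\infty$, consistent with (\ref{t}).

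The main obstacle is twofold. First is the coefficient bookkeeping with its dimension-dependent scalings, and especially the case $N=4$: there the Bessel-type correction $\phi_{j,t_j}$ of Lemma~\ref{lem-4} behaves like $t_je^{2t_j}$ rather than $e^{2t_j}$ at the center, producing the logarithmic resonance $-2t_2e^{2t_2}\sim\beta$ that makes the $N=4$ statement structurally different. Second is verifying that the remainders are genuinely subdominant to the $O(\beta)$ balance: since Lemma~\ref{phi} yields only $\|\phi_\epsilon\|_\infty=O(\beta^\tau)$ with $\tfrac12<\tau<1$, one cannot simply invoke smallness of $\phi_\epsilon$. Instead I would exploit that $Z_j$ is a near-kernel, so that $\int|L_\epsilon Z_j|\,e^{-\beta t}\,dt=O(\beta)$ (this is exactly where the well-adapted $w_{j,t_j}$ is needed in low dimensions), whence $\langle\phi_\epsilon,L_\epsilon Z_j\rangle_\epsilon=O(\beta^{1+\tau})=o(\beta)$; and for $\langle N_\epsilon[\phi_\epsilon],Z_j\rangle_\epsilon$ I would split $\mathbb{R}$ into the region where $|w_{\epsilon,\textbf{t}}|\ge|\phi_\epsilon|$, using the quadratic bound $|N_\epsilon[\phi_\epsilon]|\le C|w_{\epsilon,\textbf{t}}|^{p-2}\phi_\epsilon^2$, and its complement, where $Z_j$ is exponentially small, again obtaining $o(\beta)$. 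This regional estimate of the nonlinear term for the smaller powers $p$ occurring at large $N$ is the most delicate part of the argument.
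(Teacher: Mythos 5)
Your strategy coincides with the paper's own proof: both pair $S_\epsilon[v_\epsilon]=0$ (with $v_\epsilon=w_{\epsilon,\textbf{t}}+\phi_\epsilon$ and $\phi_\epsilon$ controlled by Lemma \ref{phi}) against the translation modes $w_{j,t_j}'$, show every $\phi_\epsilon$-contribution is $o(\beta)$, and then solve the resulting $2\times2$ system built from the three competing scales (drift $\beta$, interaction, well self-correction). The only structural difference is that the paper integrates against the unweighted measure $dt$ and integrates by parts explicitly, while you use $\langle\cdot,\cdot\rangle_\epsilon$ and self-adjointness; this is immaterial since $e^{-\beta t}=1+O(\beta|\log\beta|)$ on the relevant range.

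However, one intermediate estimate you lean on is false as stated. Writing $Z_j=w_{j,t_j}'$ as you do, differentiation of the equation of $w_{j,t_j}$ gives
\begin{align*}
L_{\epsilon,\textbf{t}}[w_{j,t_j}']=2e^{2t}w_{j,t_j}-\beta w_{j,t_j}''+\tfrac{\beta^2}{4}w_{j,t_j}'
+p\big[|w_{\epsilon,\textbf{t}}|^{p-1}w_{j,t_j}'-w_{t_j}^{p-1}w_{t_j}'\big],
\end{align*}
and the first term is not of size $\beta$ in $L^1$: integrating the equation of $w_{j,t_j}$ over $\mathbb{R}$ and using $\int_{\mathbb{R}}(w^p-\gamma_0 w)\,dt=0$ yields
\begin{align*}
\int_{-\infty}^{\infty}e^{2t}w_{j,t_j}\,dt=-\gamma_0\int_{-\infty}^{\infty}\phi_{j,t_j}\,dt+O(e^{2t_j}),
\end{align*}
which by Lemma \ref{lem-4} is of order $e^{(N-2)t_j/2}$ for $N\le5$; for $j=2$, $N=3$ this is $e^{t_2/2}\sim\beta^{1/2}\gg\beta$. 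Hence $\int|L_{\epsilon,\textbf{t}}[w_{j,t_j}']|e^{-\beta t}\,dt=O(\beta^{1/2})$, not $O(\beta)$, and your bound $\langle\phi_\epsilon,L_{\epsilon,\textbf{t}}[w_{j,t_j}']\rangle_\epsilon=O(\beta^{1+\tau})$ should read $O(\beta^{\tau+1/2})$. The step survives — but only because Lemma \ref{phi} gives $\tau>\tfrac12$, so that $\beta^{\tau+1/2}=o(\beta)$; the mechanism is this interplay of exponents, not near-kernel smallness of $w_{j,t_j}'$, and this is exactly how the paper treats the identical term $2\int e^{2t}w_{1,t_1}\phi\,dt$. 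A second, purely cosmetic imprecision: in the $j=2$ relation the interaction enters at the same order $\beta$ as the drift and the self-correction (the paper's system reads $\beta\int|w'|^2\,dt+\tfrac12\big[e^{-|t_1-t_2|/2}-e^{t_2}\big]A_{\epsilon,3}\int w^pe^{t/2}\,dt=o(\beta)$), so your heuristic ``drift balances self-correction'' misstates the constant $a_{0,3}$ by a factor $2$, though not the structure of the expansion, which is all the lemma asserts.
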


\begin{proof}
Here we only give the proof for $N=3$, for $N>3$, the proof is similar.  From $S_\epsilon[v_\epsilon]=0$ and $v_\epsilon=w_{\epsilon,\textbf{t}}+\phi_\epsilon$ we deduce
\begin{align}\label{eq3.1a}
L_{\epsilon,\textbf{t}}[\phi]+S_\epsilon[w_{\epsilon,\textbf{t}}]+N_\epsilon[\phi]=0,
\end{align}
where
\begin{align*}
L_{\epsilon,\textbf{t}}[\phi]=\phi''-\beta\phi'-(\gamma+e^{2t})\phi+p|w_{\epsilon,\textbf{t}}|^{p-1}\phi,
\end{align*}
and
\begin{align*}
N_\epsilon[\phi]=|w_{\epsilon,\textbf{t}}+\phi|^{p-1}(w_{\epsilon,\textbf{t}}+\phi)
 -|w_{\epsilon,\textbf{t}}|^{p-1}w_{\epsilon,\textbf{t}}-p|w_{\epsilon,\textbf{t}}|^{p-1}\phi.
\end{align*}

Multiplying (\ref{eq3.1a}) by $w_{1,t_1}'$ and integrating over $\mathbb{R}$, we
obtain
\begin{align*}
\int_{-\infty}^\infty
L_{\epsilon,\textbf{t}}[\phi]w_{1,t_1}'\,dt+\int_{-\infty}^\infty
S_\epsilon[w_{\epsilon,\textbf{t}}]w_{1,t_1}' +\int_{-\infty}^\infty
N_\epsilon[\phi]w_{1,t_1}'\,dt=0.
\end{align*}
Integrating by parts and using Lemma \ref{phi} we have
\begin{align*}
\int_{-\infty}^\infty L_{\epsilon,\textbf{t}}[\phi]w_{1,t_1}'\,dt
=&\int_{-\infty}^\infty\Big[w_{1,t_1}'''+\beta w_{1,t_1}''-(\gamma+e^{2t})w_{1,t_1}'
 +p|w_{\epsilon,\textbf{t}}|^{p-1}w_{1,t_1}'\Big]\phi\,dt\nonumber\\
=&\int_{-\infty}^\infty\beta\big[(\gamma_0+e^{2t})w_{1,t_1}-w_{t_1}^p\big]\phi\,dt
 -(\gamma-\gamma_0)\int_{-\infty}^\infty w_{1,t_1}'\phi\,dt\\
&\quad+2\int_{-\infty}^\infty e^{2t}w_{1,t_1}\phi\,dt
 +p\int_{-\infty}^\infty\Big[|w_{\epsilon,\textbf{t}}|^{p-1}w_{1,t_1}'
 -w_{t_1}^{p-1}w_{t_1}'\Big]\phi\,dt\nonumber\\
=&o(\big[\beta+e^{t_1}+e^{t_2}+e^{-|t_1-t_2|/2}\big]).\nonumber
\end{align*}
Similarly we can obtain
\begin{align*}
\int_{-\infty}^\infty L_{\epsilon,\textbf{t}}[\phi]w_{2,t_2}'\,dt
=o\big(\big[\beta+e^{t_1}+e^{t_2}+e^{-|t_1-t_2|/2}\big]\big).
\end{align*}

For the nonlinearity term using (\ref{ieq-1}) we get
\begin{align*}
\Big|N_\epsilon[\phi]\Big|&=\Big||w_{\epsilon,\textbf{t}}+\phi|^{p-1}(w_{\epsilon,\textbf{t}}+\phi)
\\
&-|w_{\epsilon,\textbf{t}}|^{p-1}w_{\epsilon,\textbf{t}}-p|w_{\epsilon,\textbf{t}}|^{p-1}\phi\Big|\leq
C|\phi|^{\min\{p,2\}}.
\end{align*}
So using the exponential decay of $w$ and taking $\tau>\max\{\frac{1}{2},\frac{1}{p}\}$ we deduce
\begin{align*}
\int_{-\infty}^\infty
N_\epsilon[\phi]w_{1,t_1}'\,dt=O(\|\phi\|_\infty^2)=o(\big[\beta+e^{t_1}+e^{t_2}+e^{-|t_1-t_2|/2}\big]).
\end{align*}
Similarly we can obtain
\begin{align*}
\int_{-\infty}^\infty
N_\epsilon[\phi]w_{2,t_2}'\,dt=o(\big[\beta+e^{t_1}+e^{t_2}+e^{-|t_1-t_2|/2}\big]).
\end{align*}

To estimate $\int_{-\infty}^\infty S_\epsilon[w_{\epsilon,\textbf{t}}]w_{1,t_1}'\,dt$, we write
\begin{align*}
\int_{-\infty}^\infty S_\epsilon[w_{\epsilon,\textbf{t}}]w_{1,t_1}'\,dt
=&\int_{-\infty}^\infty \Big[-\beta w_{\epsilon,\textbf{t}}'
 -(\gamma-\gamma_0)w_{\epsilon,\textbf{t}}
 +|w_{\epsilon,\textbf{t}}|^{p-1}w_{\epsilon,\textbf{t}}-w_{t_1}^p+w_{t_2}^p\Big]w_{1,t_1}'\,dt\nonumber\\
=&\,E_1+E_2+E_3,
\end{align*}
where
\begin{align*}
&E_1=-\beta\int_{-\infty}^\infty
 w_{\epsilon,\textbf{t}}'w_{1,t_1}'\,dt;\\
&E_2=-(\gamma-\gamma_0)\int_{-\infty}^\infty
 w_{\epsilon,\textbf{t}}w_{1,t_1}'\,dt;\\
&E_3=\int_{-\infty}^\infty
 \Big[|w_{\epsilon,\textbf{t}}|^{p-1}w_{\epsilon,\textbf{t}}
 -w_{t_1}^p+w_{t_2}^p\Big]w_{1,t_1}'\,dt.
\end{align*}
Using (\ref{w-phi-2}) and Lemma \ref{lem2.1} we obtain
\begin{eqnarray*}
E_1=-\beta\int_{-\infty}^\infty|w_{1,t_1}'|^2\,dt+\beta\int_{-\infty}^\infty
w_{1,t_1}'w_{2,t_2}'\,dt =-\beta\int_{-\infty}^\infty
|w'|^2\,dt+o(\beta).
\end{eqnarray*}
Note that $\gamma-\gamma_0=-\beta^2/4$ and using (\ref{w-phi-2}) we get
\begin{eqnarray*}
E_2=\frac{\beta^2}{4}\int_{-\infty}^\infty
w_{\epsilon,\textbf{t}}w_{1,t_1}'\,dt=O(\beta^2).
\end{eqnarray*}

To estimate $E_3$, following the argument in the proof of Lemma \ref{lem2.3}. We divide $(-\infty,\infty)$ into two intervals $I_1,I_2$
defined by
$$I_1=(-\infty,\frac{t_1+t_2}{2}),\quad I_2=[\frac{t_1+t_2}{2},\infty).$$
 On $I_1$ the following equality holds:
\begin{eqnarray*}
&&\big|w_{\epsilon,\textbf{t}}\big|^{p-1}w_{\epsilon,\textbf{t}}-w_{t_1}^p+w_{t_2}^p\\
&&=\big[(w_{1,t_1}-w_{2,t_2})^p-w_{1,t_1}^p+pw_{1,t_1}^{p-1}w_{2,t_2}\big]-pw_{1,t_1}^{p-1}w_{2,t_2}\\
&&+\big[(w_{t_1}+\phi_{1,t_1})^p-w_{t_1}^p-pw_{t_1}^{p-1}\phi_{1,t_1}\big]+pw_{t_1}^{p-1}\phi_{1,t_1}.
\end{eqnarray*}
We use inequality (\ref{ieq-1}) to get
\begin{eqnarray*}
\Big|(w_{1,t_1}-w_{2,t_2})^p-w_{1,t_1}^p+pw_{1,t_1}^{p-1}w_{2,t_2}\Big|
\leq Cw_{t_1}^{p-\delta}w_{t_2}^\delta,\\
\Big|(w_{t_1}+\phi_{1,t_1})^p-w_{t_1}^p-pw_{t_1}^{p-1}\phi_{1,t_1}\Big|
\leq Cw_{t_1}^{p-\delta}\phi_{1,t_1}^\delta,
\end{eqnarray*}
for any $1<\delta<2$.

\medskip

Then using Lemma \ref{lem2.1} and integrating by parts, we get
\begin{align*}
&\int_{I_1}\Big[\big|w_{\epsilon,\textbf{t}}\big|^{p-1}w_{\epsilon,\textbf{t}}
 -w_{t_1}^p+w_{t_2}^p\Big]w_{1,t_1}'\,dt\\
&=\int_{-\infty}^\infty w_{t_1}^pw_{t_2}'\,dt-\int_{-\infty}^\infty w_{t_1}^p\phi_{1,t_1}'\,dt
+o(e^{-|t_1-t_2|/2})+o(e^{t_1}).
\end{align*}
On the other hand, on $I_2$, using $w_{1,t_1}\leq w_{2,t_2}$, (\ref{w-phi-2}) and inequality (\ref{ieq-1}) we get
\begin{eqnarray*}
\Big|\big[\big|w_{\epsilon,\textbf{t}}\big|^{p-1}w_{\epsilon,\textbf{t}}-w_{t_1}^p+w_{t_2}^p\big]w_{1,t_1}'\Big|
\leq Cw_{t_1}^{\delta}w_{t_2}^{p+1-\delta}+C\phi_{2,t_2}^{\delta}w_{t_2}^{p+1-\delta},
\end{eqnarray*}
for any $1<\delta<2$.

\medskip

Using Lemma \ref{lem2.1} we get
\begin{align*}
\int_{I_2}\Big[\big|w_{\epsilon,\textbf{t}}\big|^{p-1}w_{\epsilon,\textbf{t}}
 -w_{t_1}^p+w_{t_2}^p\Big]w_{t_1}'\,dt
=o(e^{-|t_1-t_2|/2})+o(e^{t_2}).
\end{align*}
Thus
\begin{eqnarray*}
E_3&=\frac{1}{2}e^{-|t_1-t_2|/2}A_{\epsilon,3}\int_{-\infty}^\infty w^pe^{t/2}\,dt
+\frac{1}{2}e^{t_1}A_{\epsilon,3}\int_{-\infty}^\infty
w^pe^{t/2}\,dt\\
&+o(e^{-|t_1-t_2|/2})+o(e^{t_2}),
\end{eqnarray*}
and then
\begin{eqnarray}
\int_{-\infty}^\infty
S_\epsilon[w_{\epsilon,\textbf{t}}]w_{1,t_1}'\,dt
&=-\beta\int_{-\infty}^\infty|w'|^2\,dt+\frac{1}{2}e^{-|t_1-t_2|/2}A_{\epsilon,3}\int_{-\infty}^\infty w^pe^{t/2}\,dt\\
&+o(\beta)+o(e^{-|t_1-t_2|/2})+o(e^{t_2}).\nonumber
\end{eqnarray}
Similarly,
\begin{eqnarray}
\int_{-\infty}^\infty
S_\epsilon[w_{\epsilon,\textbf{t}}]w_{2,t_2}'\,dt
&=\beta\int_{-\infty}^\infty|w'|^2\,dt+\frac{1}{2}\Big[e^{-|t_1-t_2|/2}-e^{t_2}\Big]A_{\epsilon,3}\int_{-\infty}^\infty
w^pe^{t/2}\,dt\\
&+o(\beta)+o(e^{-|t_1-t_2|/2})+o(e^{t_2}).\nonumber
\end{eqnarray}
Combining all the estimates above, one can see that $\beta,e^{t_2}$ and $e^{-|t_1-t_2|/2}$ must have the same order.

\medskip

Therefore,
\begin{eqnarray*}
\left\{\begin{array}{ll} -\beta\int_{-\infty}^\infty |w'|^2\,dt
+\frac{1}{2}e^{-|t_1-t_2|/2}A_{\epsilon,3}\int_{-\infty}^\infty w^pe^{t/2}\,dt
=o(\beta);\\
\\
\beta\int_{-\infty}^\infty
|w'|^2\,dt+\frac{1}{2}\Big[e^{-|t_1-t_2|/2}-e^{t_2}\Big]A_{\epsilon,3}\int_{-\infty}^\infty w^pe^{t/2}\,dt =o(\beta).
\end{array}
\right.
\end{eqnarray*}
Let
\begin{eqnarray*}
e^{t_2}=a\beta,\quad e^{-|t_1-t_2|/2}=b\beta,
\end{eqnarray*}
then
\begin{eqnarray*}
a=\frac{4\int_{-\infty}^\infty |w'|^2\,dt}{A_{\epsilon,3}\int_{-\infty}^\infty
w^pe^{t/2}\,dt}+o(1)\rightarrow a_{0,3},
\end{eqnarray*}
and
\begin{eqnarray*}
b=\frac{2\int_{-\infty}^\infty |w'|^2\,dt}{A_{\epsilon,3}\int_{-\infty}^\infty
w^pe^{t/2}\,dt}+o(1)\rightarrow b_{0,3},
\end{eqnarray*}
where $a_{0,3},b_{0,3}$ are positive constants.

\medskip

Thus
\begin{align*}
\left\{\begin{array}{ll}
t_1=\log a+2\log b+3\log\beta;\\
t_2=\log a+\log\beta,
\end{array}
\right.
\end{align*}
where
\begin{eqnarray*}
a\rightarrow a_{0,3},\ b\rightarrow b_{0,3}.
\end{eqnarray*}

\end{proof}

\medskip

We now introduce the following configuration space:
\begin{align}
\Lambda\equiv
\left\{\begin{array}{lll}
\Big\{\textbf{t}=(t_1,t_2)|\frac{1}{2}a_{0,3}\beta<e^{t_2}<\frac{3}{2}a_{0,3}\beta,
\frac{1}{2}b_{0,3}\beta<e^{(t_1-t_2)/2}<\frac{3}{2}b_{0,3}\beta\Big\}\ {\rm{for}}\ N=3;\\\\
\Big\{\textbf{t}=(t_1,t_2)|\frac{1}{2}a_{0,4}\beta<-2t_2e^{2t_2}<\frac{3}{2}a_{0,4}\beta,
\frac{1}{2}b_{0,4}\beta<e^{t_1-t_2}<\frac{3}{2}b_{0,4}\beta\Big\}\ {\rm{for}}\ N=4;\\\\
\Big\{\textbf{t}=(t_1,t_2)|\frac{1}{2}a_{0,N}\beta<e^{2t_2}<\frac{3}{2}a_{0,N}\beta,
\frac{1}{2}b_{0,N}\beta<e^{(N-2)(t_1-t_2)/2}<\frac{3}{2}b_{0,N}\beta\Big\}\ {\rm{for}}\ N\geq5.
\end{array}
\right.
\end{align}

Then by Lemma \ref{basic lemma}, for $\epsilon$ sufficient small, $\textbf{t}=(t_1,t_2)\in\Lambda$ if $v_\epsilon$
is a sign-changing solution to equation (\ref{eq-v}). In the next section, we will show an one-to-one correspandence between the sign-changing solution of (\ref{eq0.1}) and the critical points of some functional in $\Lambda$.

%~~~~~~~~~~~~~~~~~~~~~~~~~~~~~~~~~~~~~~~~~~~~~~~~~~~~~~~~~~~~~~~~~~~~~~~~~~~~~~~~~~~~~~~~~~
%~~~~~~~~~~~~~~~~~~~~~~~~~~~~~~~~~~~~~~~~~~~~~~~~~~~~~~~~~~~~~~~~~~~~~~~~~~~~~~~~~~~~~~~~~~~~~~~
%~~~~~~~~~~~~~~~~~~~~~~~~~~~~~~~~~~~~~~~~~~~~~~~~~~~~~~~~~~~~~~~~~~~~~~~~~~~~~~~~~~~~~~~~~~~~~~~
\section{The existence result: Liapunov-Schmidt reduction}

In this section we outline the main steps of the so called Liapunov-Schmidt reduction method or localized energy method, which reduces the infinite problem to finding a critical point for a functional on a finite dimensional space. A very important observation is the reduction Lemma \ref{crucial lemma}. To achieve this, we first study the solvability of a linear problem and then apply some standard fixed point theorem for contraction mapping to solve the nonlinear problem. Since the procedure has been by now standard (see for example \cite{MNW} and the references therein), we will omit most of the details.

\subsection{An auxiliary linear problem}

In this subsection we study a linear theory which allows us to perform
the finite-dimensional reduction procedure.

First observing that
orthogonality to $\frac{\partial w_{\epsilon,\textbf{t}}}{\partial
t_j}$ in $H$, $j=1,2$, is equivalent to orthogonality to the following functions
\begin{align}\label{z-1-1}
Z_{\epsilon,t_j}:=-(\partial_{t_j}w_{\epsilon,\textbf{t}})''+\beta(\partial_{t_j}w_{\epsilon,\textbf{t}})'+(\gamma+e^{2t})\partial_{t_j}w_{\epsilon,\textbf{t}},\quad j=1,2,
\end{align}
in the weighted $L^2$-product $\langle\ ,\ \rangle_\epsilon\,$.

\medskip

By (\ref{eqw}) and elementary computations, we obtain for $j=1,2$,
\begin{align}
\partial_{t_j}w_{\epsilon,\textbf{t}}=(-1)^{j+1}\partial_{t_j}w_{j,t_j}=(-1)^{j+1}\big(\partial_{t_j}w_{t_j}+\partial_{t_j}\phi_{t,t_j}\big)+O(e^{2t_j}),
\end{align}
and
\begin{align}\label{Z}
Z_{\epsilon,t_j}&=(-1)^{j}\Big[pw_{t_j}^{p-1}w_{t_j}'
 -\beta(\partial_{t_j}w_{j,t_j})'-(\gamma-\gamma_0)\partial_{t_j}w_{j,t_j}\Big].
\end{align}

In this section, we consider the following linear problem:
\begin{align}\label{eq3.1}
\left\{\begin{array}{ll}
L_{\epsilon,\textbf{t}}[\phi]:=\phi''-\beta\phi'-(\gamma+e^{2t})\phi+p|w_{\epsilon,\textbf{t}}|^{p-1}\phi
=h+\sum\limits^2_{j=1}c_jZ_{\epsilon,t_j};\\\\
\langle\phi,Z_{\epsilon,t_j}\rangle_\epsilon=0,\ j=1,2,
\end{array}
\right.
\end{align}
where ${\bf t}\in \Lambda$.

\medskip

For the above linear problem, we have the following result:

\begin{pop}\label{pop3.1}
Let $\phi$ satisfy (\ref{eq3.1}) with $\|h\|_\infty<\infty$. Then for $\epsilon$ sufficiently
small, we have
\begin{eqnarray}\label{eq3.6}
\|\phi\|_\infty\leq C\|h\|_\infty,
\end{eqnarray}
where $C$ is a positive constant independent of $\epsilon$ and
$\textbf{t}\in\Lambda$.
\end{pop}

\begin{proof}
The proof is now standard, see for example \cite{MNW}.

\end{proof}

\medskip

Using Fredholm's alternative we can show the following existence result:
\begin{pop}\label{pop3.2}
There exists $\epsilon_0>0$ such that for any $\epsilon<\epsilon_0$
the following property holds true. Given $h\in
L^\infty(\mathbb{R})$, there exists a unique pair $(\phi,c_1,c_2)$
such that
\begin{align}\label{eq3.16}
\left\{\begin{array}{ll}
L_{\epsilon,\textbf{t}}[\phi]=h+\sum\limits^2_{j=1}c_jZ_{\epsilon,t_j};\\
\langle\phi,Z_{\epsilon,t_j}\rangle_\epsilon=0,\ j=1,2.
\end{array}
\right.
\end{align}
Moreover, we have
\begin{align}\label{eq3.18}
\|\phi\|_\infty+\sum\limits^2_{j=1}|c_j|\leq C\|h\|_\infty
\end{align}
for some positive constant $C$.
\end{pop}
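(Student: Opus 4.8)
The plan is to recast \eqref{eq3.16} as an operator equation $(I-T)\phi=\tilde h$ on the constraint space and to invoke the Fredholm alternative, using Proposition \ref{pop3.1} to supply the injectivity. First I would pass to a weak formulation in $H$. Since the weight $e^{-\beta t}$ in the inner product $(\cdot,\cdot)_\epsilon$ exactly absorbs the first-order term $-\beta\phi'$, integration by parts yields, for all $\phi,\psi\in H$,
\begin{align*}
\langle L_{\epsilon,\textbf{t}}[\phi],\psi\rangle_\epsilon=-(\phi,\psi)_\epsilon+p\langle|w_{\epsilon,\textbf{t}}|^{p-1}\phi,\psi\rangle_\epsilon.
\end{align*}
Let $K_\epsilon=\mathrm{span}\{\partial_{t_1}w_{\epsilon,\textbf{t}},\partial_{t_2}w_{\epsilon,\textbf{t}}\}$ and let $K_\epsilon^\perp$ be its orthogonal complement in $H$. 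By the equivalence recorded before \eqref{Z}, $K_\epsilon^\perp=\{\phi:\langle\phi,Z_{\epsilon,t_j}\rangle_\epsilon=0,\ j=1,2\}$, and since the terms $c_jZ_{\epsilon,t_j}$ are annihilated upon testing against $K_\epsilon^\perp$ in the weighted $L^2$-product, solving \eqref{eq3.16} is equivalent to finding $\phi\in K_\epsilon^\perp$ with
\begin{align*}
(\phi,\psi)_\epsilon=p\langle|w_{\epsilon,\textbf{t}}|^{p-1}\phi,\psi\rangle_\epsilon-\langle h,\psi\rangle_\epsilon\qquad\text{for all }\psi\in K_\epsilon^\perp.
\end{align*}

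By the Riesz representation theorem I would then introduce bounded linear maps $T$ and $\tilde h$ on $K_\epsilon^\perp$ via $(T\phi,\psi)_\epsilon=p\langle|w_{\epsilon,\textbf{t}}|^{p-1}\phi,\psi\rangle_\epsilon$ and $(\tilde h,\psi)_\epsilon=-\langle h,\psi\rangle_\epsilon$, reducing the problem to $(I-T)\phi=\tilde h$. The key structural fact is that $T$ is compact: since $|w_{\epsilon,\textbf{t}}|^{p-1}$ decays exponentially, the multiplication $\phi\mapsto|w_{\epsilon,\textbf{t}}|^{p-1}\phi$ carries a bounded sequence in $H$ to one precompact in the weighted $L^2$ space, by a standard tightness plus local-compactness argument. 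The Fredholm alternative then reduces solvability of $(I-T)\phi=\tilde h$ for every $\tilde h$ to injectivity of $I-T$ on $K_\epsilon^\perp$. Injectivity is precisely where Proposition \ref{pop3.1} enters: if $(I-T)\phi=0$, then $\phi$ solves \eqref{eq3.1} with $h\equiv0$, so $\|\phi\|_\infty\le C\|h\|_\infty=0$ and $\phi=0$. Hence $I-T$ is invertible and a unique $\phi\in K_\epsilon^\perp$ exists.

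It remains to recover $c_1,c_2$ and to prove the bound \eqref{eq3.18}. Having solved the weak problem, $L_{\epsilon,\textbf{t}}[\phi]-h$ is $\langle\cdot,\cdot\rangle_\epsilon$-orthogonal to $K_\epsilon^\perp$ and therefore lies in $\mathrm{span}\{Z_{\epsilon,t_1},Z_{\epsilon,t_2}\}$; this determines $c_1,c_2$ uniquely once $Z_{\epsilon,t_1},Z_{\epsilon,t_2}$ are shown to be linearly independent, which holds for small $\epsilon$ because, by \eqref{Z} and Lemma \ref{lem2.1}, the Gram matrix $\big(\langle Z_{\epsilon,t_i},Z_{\epsilon,t_j}\rangle_\epsilon\big)$ is diagonally dominant after rescaling and hence invertible. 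Testing the first equation of \eqref{eq3.16} against $Z_{\epsilon,t_k}$ and inverting this matrix expresses each $c_k$ in terms of $\langle h,Z_{\epsilon,t_k}\rangle_\epsilon$ and $\langle L_{\epsilon,\textbf{t}}[\phi],Z_{\epsilon,t_k}\rangle_\epsilon$; estimating the former by $\|h\|_\infty$ and the latter by $\|\phi\|_\infty$ (using the exponential decay of $Z_{\epsilon,t_k}$ to control the weighted integrals) and combining with $\|\phi\|_\infty\le C\|h\|_\infty$ from Proposition \ref{pop3.1} gives $\sum_j|c_j|\le C\|h\|_\infty$, which is \eqref{eq3.18}.

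The step I expect to be the main obstacle is the compactness of $T$ together with the uniformity of the whole construction: one must check that the exponentially decaying weight genuinely compactifies the relevant embedding, and that the constant $C$ and the threshold $\epsilon_0$ can be chosen uniformly in $\textbf{t}\in\Lambda$ as $\epsilon\to0$. Once Proposition \ref{pop3.1} is in hand, the remaining arguments are essentially bookkeeping.
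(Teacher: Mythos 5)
Your proposal is correct and follows essentially the same route as the paper, whose proof of Proposition \ref{pop3.2} is precisely the combination of Proposition \ref{pop3.1} (supplying injectivity and the a priori bound) with the Fredholm alternative, with details deferred to \cite{MNW}. What you have written is a faithful, fleshed-out version of that standard argument: the weak reformulation on $K_\epsilon^\perp$, compactness of the multiplication operator from the exponential decay of $|w_{\epsilon,\textbf{t}}|^{p-1}$, and recovery of $c_1,c_2$ via the near-diagonal Gram matrix of the $Z_{\epsilon,t_j}$.
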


\begin{proof}
The result follows from Proposition \ref{pop3.1} and the Fredholm's alternative theorem, see for example \cite{MNW}.
%The bound in () follows from Proposition () and (). Let us now prove
%the existence part. Note that, integrating by parts, one has
%\begin{eqnarray*}
%(\psi,\frac{\partial w_{t_i}}{\partial t_i})_\epsilon=0\ {\rm{if\
%and\ only\ if}}\ \langle\psi,Z_{\epsilon,t_j}\rangle_\epsilon=0,\
%j=1,2.
%\end{eqnarray*}
%Observe that $\phi$ solves ()-() if and only if $\phi\in H$
%satisfies
%\begin{eqnarray*}
%\int_{-\infty}^\infty\Big[-\phi'\psi'-(\gamma+e^{2t})\phi\psi
%+p|w_{\epsilon,\textbf{t}}|^{p-1}\phi\psi\Big]e^{-\beta
%t}\,dt=\langle\phi,\psi\rangle_\epsilon,\ \forall\psi\in H.
%\end{eqnarray*}
%This equation can be rewritten as
%\begin{eqnarray*}
%\phi+S(\phi)=\overline{h}\ {\rm{in}}\ H,
%\end{eqnarray*}
%where $\overline{h}$ is defined by duality and $S:\ H\rightarrow H$
%is a linear compact operator.
%
%Using the Fredholm's alternative, showing that equation () has a unique
%solution for each $\overline{h}$, is equivalent to showing that the
%equation has a uniqueness solution for $\overline{h}=0$, which in
%turn follows from Proposition () and our proof is complete.

\end{proof}

\medskip

In the following , if $\phi$ is the unique solution given in
Proposition \ref{pop3.2}, we set
\begin{eqnarray}\label{eq3.20}
\phi=A_\epsilon(h).
\end{eqnarray}
Note that (\ref{eq3.18}) implies
\begin{eqnarray}\label{eq3.21}
\|A_\epsilon(h)\|_\infty\leq C\|h\|_\infty.
\end{eqnarray}

%~~~~~~~~~~~~~~~~~~~~~~~~~~~~~~~~~~~~~~~~~~~~~~~~~~~~~~~~~~~~~~~~~~~~~~~~~~~~~~~~~~~~~~~~~~
%~~~~~~~~~~~~~~~~~~~~~~~~~~~~~~~~~~~~~~~~~~~~~~~~~~~~~~~~~~~~~~~~~~~~~~~~~~~~~~~~~~~~~~~~~~~~~~~
%~~~~~~~~~~~~~~~~~~~~~~~~~~~~~~~~~~~~~~~~~~~~~~~~~~~~~~~~~~~~~~~~~~~~~~~~~~~~~~~~~~~~~~~~~~~~~~~
\subsection{The nonlinear projected problem}
This subsection is devoted to the solvability of the following non-linear projected problem:
\begin{align}\label{eq4.2}
\left\{\begin{array}{ll}
(w_{\epsilon,\textbf{t}}+\phi)''-\beta(w_{\epsilon,\textbf{t}}+\phi)'
-(\gamma+e^{2t})(w_{\epsilon,\textbf{t}}+\phi)
+|w_{\epsilon,\textbf{t}}+\phi|^{p-1}(w_{\epsilon,\textbf{t}}+\phi)=\sum\limits_{j=1}^2c_jZ_{\epsilon,t_j};\\\\
\langle\phi,Z_{\epsilon,t_j}\rangle_\epsilon=0,\ j=1,2.
\end{array}
\right.
\end{align}

The first equation in (\ref{eq4.2}) can be written as
\begin{align*}
\phi''-\beta\phi'-(\gamma+e^{2t})\phi+p|w_{\epsilon,\textbf{t}}|^{p-1}\phi
=-S_\epsilon[w_{\epsilon,\textbf{t}}]-N_\epsilon[\phi]+\sum^2_{j=1}c_jZ_{\epsilon,t_j},
\end{align*}
where
\begin{align}\label{eq4.4}
N_\epsilon[\phi]=|w_{\epsilon,\textbf{t}}+\phi|^{p-1}(w_{\epsilon,\textbf{t}}+\phi)
-|w_{\epsilon,\textbf{t}}|^{p-1}w_{\epsilon,\textbf{t}}
-p|w_{\epsilon,\textbf{t}}|^{p-1}\phi_{\epsilon,\textbf{t}}.
\end{align}

\medskip

First, we have the following estimates:
\begin{lem}\label{lem4.1}
For $\textbf{t}\in\Lambda$ and $\epsilon$ sufficiently small, we
have for $\|\phi\|_\infty+\|\phi_1\|_\infty+\|\phi_2\|_\infty\leq1$,
\begin{align}\label{eq4.5}
\|N_\epsilon[\phi]\|_\infty\leq C\|\phi\|_\infty^{\min\{p,2\}};
\end{align}
\begin{align}\label{eq4.6}
\|N_\epsilon[\phi_1]-N_\epsilon[\phi_2]\|_\infty\leq
C(\|\phi_1\|_\infty^{\min\{p-1,1\}}+\|\phi_2\|_\infty^{\min\{p-1,1\}})\|\phi_1-\phi_2\|_\infty.
\end{align}
\end{lem}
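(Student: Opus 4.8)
The plan is to treat both inequalities as pointwise estimates on the first-order Taylor remainder of the nonlinearity $f(s)=|s|^{p-1}s$ expanded at $s=w_{\epsilon,\textbf{t}}$. Writing $w=w_{\epsilon,\textbf{t}}$ for brevity and noting $f'(s)=p|s|^{p-1}$, the fundamental theorem of calculus gives the representation
\begin{align*}
N_\epsilon[\phi]=f(w+\phi)-f(w)-f'(w)\phi=\int_0^1\big[f'(w+s\phi)-f'(w)\big]\phi\,ds,
\end{align*}
so that everything reduces to controlling the increment $|f'(a)-f'(b)|=p\big||a|^{p-1}-|b|^{p-1}\big|$ through inequality (\ref{ieq-1}).

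First I would handle (\ref{eq4.5}) by splitting into the two regimes appearing in (\ref{ieq-1}). When $1\leq p<2$ the exponent $p-1$ lies in $(0,1)$, so (\ref{ieq-1}) yields $\big||w+s\phi|^{p-1}-|w|^{p-1}\big|\leq|s\phi|^{p-1}\leq|\phi|^{p-1}$; inserting this into the integral gives $|N_\epsilon[\phi]|\leq p|\phi|^{p}$, i.e. the bound with $\min\{p,2\}=p$. When $p\geq2$ the exponent $p-1\geq1$, and (\ref{ieq-1}) gives $\big||w+s\phi|^{p-1}-|w|^{p-1}\big|\leq(p-1)|s\phi|\big(|w+s\phi|^{p-2}+|w|^{p-2}\big)$; since $\|w\|_\infty\leq C$ by Lemma \ref{lem-1} (together with the bounds (\ref{w-phi-1})) and $\|\phi\|_\infty\leq1$, the factor in parentheses is uniformly bounded, so $|f'(w+s\phi)-f'(w)|\leq C|\phi|$ and hence $|N_\epsilon[\phi]|\leq C|\phi|^2$, matching $\min\{p,2\}=2$.

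For the Lipschitz-type estimate (\ref{eq4.6}), I would write the analogous representation
\begin{align*}
N_\epsilon[\phi_1]-N_\epsilon[\phi_2]=\int_0^1\big[f'(w+\psi_s)-f'(w)\big]\,ds\;(\phi_1-\phi_2),
\end{align*}
where $\psi_s=\phi_2+s(\phi_1-\phi_2)$ satisfies $|\psi_s|\leq|\phi_1|+|\phi_2|$. In the regime $1\leq p<2$, (\ref{ieq-1}) together with the subadditivity $(a+b)^{p-1}\leq a^{p-1}+b^{p-1}$ valid for $p-1\in(0,1)$ bounds $|f'(w+\psi_s)-f'(w)|\leq p|\psi_s|^{p-1}\leq C(|\phi_1|^{p-1}+|\phi_2|^{p-1})$, which is exactly the claimed weight with $\min\{p-1,1\}=p-1$. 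In the regime $p\geq2$ the same mean-value bound combined with the uniform boundedness of $w$ and of $\psi_s$ gives $|f'(w+\psi_s)-f'(w)|\leq C|\psi_s|\leq C(|\phi_1|+|\phi_2|)$, producing the weight with $\min\{p-1,1\}=1$. Taking suprema in each case completes the argument.

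The computations here are entirely routine; the only point requiring a little care is the case distinction $p<2$ versus $p\geq2$, which reflects the change in regularity of $f'(s)=p|s|^{p-1}$ from merely $(p-1)$-Hölder to locally Lipschitz. The uniform $L^\infty$ bound on $w_{\epsilon,\textbf{t}}$ --- guaranteed by Lemma \ref{lem-1} and the estimates (\ref{w-phi-1})--(\ref{w-phi-2}) --- is what lets me absorb the factors $|w|^{p-2}$ in the superquadratic regime, so no genuine obstacle arises beyond bookkeeping.
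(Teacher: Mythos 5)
Your proof is correct and is essentially the paper's own argument spelled out in detail: the paper's proof is exactly ``the mean-value theorem and inequality (\ref{ieq-1})'', and your integral-remainder representation of $N_\epsilon[\phi]$ plus the case split $p<2$ versus $p\geq 2$ (using the two branches of (\ref{ieq-1}) applied to $f'(s)=p|s|^{p-1}$, together with the uniform bound on $w_{\epsilon,\textbf{t}}$) is precisely that argument made explicit.
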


\begin{proof}
These inequalities follows from the mean-value theorem and
inequality (\ref{ieq-1}).
\end{proof}

By the standard fixed point theorem for contraction mapping and Implicit Function Theorem,  Lemma \ref{lem2.3} and \ref{lem4.1}, we have the following Proposition:
\begin{pop}\label{pop4.2}
For $\textbf{t}\in\Lambda$ and $\epsilon$ sufficiently small, there
exists a unique $\phi=\phi_{\epsilon,\textbf{t}}$ such that
(\ref{eq4.2}) holds. Moreover,
$\textbf{t}\rightarrow\phi_{\epsilon,\textbf{t}}$ is of class $C^1$
as a map into $H$, and we have
\begin{align}\label{eq4.7}
\|\phi_{\epsilon,\textbf{t}}\|_\infty+\sum\limits^2_{j=1}|c_j|\leq\left\{\begin{array}{lll}
C\big[\beta+e^{\tau t_2}+e^{-\tau|t_1-t_2|/2}\big]\quad{\rm{for}}\
N=3;\\\\
C\big[\beta+t_2^{\tau}e^{2\tau t_2}+e^{-\tau|t_1-t_2|}\big]\quad{\rm{for}}\
N=4;\\\\
C\big[\beta+e^{2\tau t_2}+e^{-\tau(N-2)|t_1-t_2|/2}\big]\quad{\rm{for}}\
N\geq5,
\end{array}
\right.
\end{align}
where $\tau$ satisfies $\frac{1}{2}<\tau<\frac{\min\{p,2\}}{2}$.
%\begin{align}
%\|\phi_{\epsilon,\textbf{t}}\|_\ast\leq
%C(\beta+e^{2t_2}+e^{-p\sqrt{\gamma}|t_1-t_2|/2}).
%\end{align}
\end{pop}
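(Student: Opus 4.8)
The plan is to recast the projected problem (\ref{eq4.2}) as a fixed-point equation and solve it by a contraction mapping argument, relying on the linear solvability theory of Proposition \ref{pop3.2}. Writing the first equation of (\ref{eq4.2}) in the form
\[
L_{\epsilon,\textbf{t}}[\phi]=-S_\epsilon[w_{\epsilon,\textbf{t}}]-N_\epsilon[\phi]+\sum_{j=1}^2 c_j Z_{\epsilon,t_j},
\]
and recalling from (\ref{eq3.20}) that $A_\epsilon(h)$ denotes the unique solution of the linear projected problem with datum $h$, the pair $(\phi,c_1,c_2)$ solves (\ref{eq4.2}) if and only if $\phi$ is a fixed point of the map
\[
\mathcal{T}_\epsilon(\phi):=A_\epsilon\big(-S_\epsilon[w_{\epsilon,\textbf{t}}]-N_\epsilon[\phi]\big).
\]

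First I would denote by $K_\epsilon$ the right-hand side of (\ref{eq4.7}) and work in the closed ball $\mathcal{B}=\{\phi:\|\phi\|_\infty\le M K_\epsilon\}$ for a large constant $M$ to be fixed. Using the a priori bound (\ref{eq3.21}), the error estimate of Lemma \ref{lem2.3}, and the nonlinear estimate (\ref{eq4.5}) of Lemma \ref{lem4.1}, one gets for $\phi\in\mathcal{B}$,
\[
\|\mathcal{T}_\epsilon(\phi)\|_\infty\le C\big(\|S_\epsilon[w_{\epsilon,\textbf{t}}]\|_\infty+\|N_\epsilon[\phi]\|_\infty\big)\le C\big(K_\epsilon+(MK_\epsilon)^{\min\{p,2\}}\big).
\]
Since $\min\{p,2\}>1$ and $K_\epsilon\to0$ as $\epsilon\to0$ for $\textbf{t}\in\Lambda$, the nonlinear term is of strictly higher order, so for $M$ fixed large and $\epsilon$ small $\mathcal{T}_\epsilon$ maps $\mathcal{B}$ into itself. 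For the contraction, the Lipschitz estimate (\ref{eq4.6}) together with (\ref{eq3.21}) gives
\[
\|\mathcal{T}_\epsilon(\phi_1)-\mathcal{T}_\epsilon(\phi_2)\|_\infty\le C\|N_\epsilon[\phi_1]-N_\epsilon[\phi_2]\|_\infty\le C(MK_\epsilon)^{\min\{p-1,1\}}\|\phi_1-\phi_2\|_\infty,
\]
and since $\min\{p-1,1\}>0$ the prefactor is less than $\tfrac12$ for $\epsilon$ small. The Banach fixed-point theorem then yields a unique $\phi=\phi_{\epsilon,\textbf{t}}\in\mathcal{B}$, giving the bound on $\|\phi_{\epsilon,\textbf{t}}\|_\infty$ in (\ref{eq4.7}); the bound on $\sum_j|c_j|$ follows from the full linear estimate (\ref{eq3.18}) applied to the datum $h=-S_\epsilon[w_{\epsilon,\textbf{t}}]-N_\epsilon[\phi_{\epsilon,\textbf{t}}]$.

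Finally, for the $C^1$ dependence of $\textbf{t}\mapsto\phi_{\epsilon,\textbf{t}}$ into $H$, I would apply the Implicit Function Theorem to the $C^1$ map $(\textbf{t},\phi,c_1,c_2)\mapsto L_{\epsilon,\textbf{t}}[\phi]+S_\epsilon[w_{\epsilon,\textbf{t}}]+N_\epsilon[\phi]-\sum_j c_j Z_{\epsilon,t_j}$ together with the two orthogonality constraints; its linearization in $(\phi,c_1,c_2)$ is precisely the operator inverted in Proposition \ref{pop3.2}, which is uniformly invertible for $\textbf{t}\in\Lambda$, so smoothness of the solution in $\textbf{t}$ follows. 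The whole scheme is routine; the one point that genuinely requires the structure of the problem is the gap $\min\{p,2\}>1$, which is exactly what makes $N_\epsilon$ a higher-order perturbation and thereby closes both the self-mapping and the contraction estimates. The only mild care needed is to carry along the dimension-dependent form of $K_\epsilon$ supplied by Lemma \ref{lem2.3}, so that the identical argument produces the three separate bounds displayed in (\ref{eq4.7}).
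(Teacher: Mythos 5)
Your proposal is correct and follows essentially the same route as the paper, which proves Proposition \ref{pop4.2} precisely by the standard contraction mapping argument based on Proposition \ref{pop3.2}, the error estimate of Lemma \ref{lem2.3}, the nonlinear estimates of Lemma \ref{lem4.1}, and the Implicit Function Theorem for the $C^1$ dependence on $\textbf{t}$. You have simply written out in full the details that the paper leaves as standard, including the key point that $\min\{p,2\}>1$ makes $N_\epsilon$ a higher-order perturbation.
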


\subsection{Energy expansion for reduced energy functional}

In this subsection we expand the quantity
\begin{align}\label{eq5.1}
K_\epsilon(\textbf{t})=E_\epsilon[w_{\epsilon,\textbf{t}}+\phi_{\epsilon,\textbf{t}}]:\
\Lambda\rightarrow \mathbb{R}
\end{align}
in terms of $\epsilon$ and $\textbf{t}$, where $\phi_{\epsilon,\textbf{t}}$
is obtained in Proposition \ref{pop4.2}.

\begin{lem}\label{lem-K}
For $\textbf{t}\in\Lambda$ and $\epsilon$ sufficiently small, we
have for $N=3$,
\begin{align*}
K_\epsilon(\textbf{t})&=
\big(\frac{1}{2}-\frac{1}{p+1}\big)(e^{-\beta t_1}+e^{-\beta
t_2})\int_{-\infty}^\infty
w^{p+1}\,dt+\frac{1}{2}e^{t_2}A_{\epsilon,3}\int_{-\infty}^\infty w^{p}e^{t/2}\,dt\nonumber\\
 &\quad+e^{-|t_1-t_2|/2}A_{\epsilon,3}\int_{-\infty}^\infty
w^pe^{t/2}\,dt+o(\beta)+o(e^{t_2})+o(e^{-|t_1-t_2|/2})\\
&=\widetilde{K}_\epsilon(\textbf{t})+o(\beta)+o(e^{t_2})+o(e^{-|t_1-t_2|/2}).
\end{align*}

For $N=4$,
\begin{align*}
K_\epsilon(\textbf{t})&=
\big(\frac{1}{2}-\frac{1}{p+1}\big)(e^{-\beta t_1}+e^{-\beta
t_2})\int_{-\infty}^\infty
w^{p+1}\,dt-\frac{1}{4}t_2e^{2t_2}A_{\epsilon,4}\int_{-\infty}^\infty w^{p}e^{t}\,dt\nonumber\\
 &\quad+e^{-|t_1-t_2|}A_{\epsilon,4}\int_{-\infty}^\infty w^pe^{t}\,dt+o(\beta)+o(t_2e^{2t_2})+o(e^{-|t_1-t_2|})\\
&=\widetilde{K}_\epsilon(\textbf{t})+o(\beta)+o(t_2e^{2t_2})+o(e^{-|t_1-t_2|}).
\end{align*}

For $N\geq5$,
\begin{align*}
K_\epsilon(\textbf{t})&=
\big(\frac{1}{2}-\frac{1}{p+1}\big)(e^{-\beta t_1}+e^{-\beta
t_2})\int_{-\infty}^\infty
w^{p+1}\,dt+\frac{1}{2}e^{2t_2}\int_{-\infty}^\infty w^{2}e^{2t}\,dt\nonumber\\
 &\quad+e^{-(N-2)|t_1-t_2|/2}A_{\epsilon,N}\int_{-\infty}^\infty
  w^pe^{(N-2)t/2}\,dt+o(\beta)+o(e^{2t_2})+o(e^{-(N-2)|t_1-t_2|/2})\\
 &=\widetilde{K}_\epsilon(\textbf{t})+o(\beta)+o(e^{2t_2})+o(e^{-(N-2)|t_1-t_2|/2}).
\end{align*}
\end{lem}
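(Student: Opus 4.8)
The plan is to expand $E_\epsilon[w_{\epsilon,\mathbf{t}}+\phi_{\epsilon,\mathbf{t}}]$ around the approximate solution $w_{\epsilon,\mathbf{t}}$ and show that the contribution of $\phi_{\epsilon,\mathbf{t}}$ is of higher order, so that $K_\epsilon(\mathbf{t})=E_\epsilon[w_{\epsilon,\mathbf{t}}]+(\text{error})$. First I would perform a Taylor expansion of the energy in the correction $\phi:=\phi_{\epsilon,\mathbf{t}}$:
\begin{align*}
E_\epsilon[w_{\epsilon,\mathbf{t}}+\phi]
=E_\epsilon[w_{\epsilon,\mathbf{t}}]
-\langle S_\epsilon[w_{\epsilon,\mathbf{t}}],\phi\rangle_\epsilon
+\tfrac{1}{2}\langle L_{\epsilon,\mathbf{t}}[\phi],\phi\rangle_\epsilon
+R_\epsilon[\phi],
\end{align*}
where the linear term comes from $E_\epsilon'[w_{\epsilon,\mathbf{t}}]=-S_\epsilon[w_{\epsilon,\mathbf{t}}]$ (in the weighted $L^2$-product) and $R_\epsilon[\phi]$ collects the purely nonlinear remainder. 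The orthogonality condition $\langle\phi,Z_{\epsilon,t_j}\rangle_\epsilon=0$ from Proposition \ref{pop4.2} together with the bound $\|\phi\|_\infty\le K_\epsilon$ in (\ref{eq4.7}) is what makes these terms negligible. Using Cauchy--Schwarz, Lemma \ref{lem2.3} and Proposition \ref{pop4.2}, the linear term is $O(\|S_\epsilon[w_{\epsilon,\mathbf{t}}]\|_\infty\cdot\|\phi\|_H)=O(K_\epsilon^2)$, the quadratic term is $O(\|\phi\|_H^2)=O(K_\epsilon^2)$, and by Lemma \ref{lem4.1} the nonlinear remainder is $O(\|\phi\|_\infty^{\min\{p+1,3\}})$. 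In every dimension $K_\epsilon^2=o(K_\epsilon)$, so all of these fall into the stated error $o(\beta)+o(e^{2t_2})+o(e^{-(N-2)|t_1-t_2|/2})$ (with the appropriate $N=3,4$ modifications).

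The main computation is therefore the expansion of $E_\epsilon[w_{\epsilon,\mathbf{t}}]=E_\epsilon[w_{1,t_1}-w_{2,t_2}]$ itself. I would split it into a self-energy part and an interaction part. For the self-energy of each bump, I would use Lemma \ref{lem-4} to replace $w_{j,t_j}$ by $w_{t_j}+\phi_{j,t_j}$, exploit the identity (\ref{ide-w}) relating $\int|w'|^2$, $\int w^{p+1}$ and $\int w^2$, and track the weight $e^{-\beta t}$. Expanding $e^{-\beta t}=1-\beta t+O(\beta^2)$ over the support of $w_{t_j}$ produces the leading self-energy $(\tfrac12-\tfrac1{p+1})(e^{-\beta t_1}+e^{-\beta t_2})\int w^{p+1}$, while the $e^{2t}|v|^2$ term in $E_\epsilon$ contributes the single-bump boundary terms ($\tfrac12 e^{t_2}A_{\epsilon,3}\int w^p e^{t/2}$ for $N=3$, the analogous $-\tfrac14 t_2 e^{2t_2}A_{\epsilon,4}\int w^p e^{t}$ for $N=4$, and $\tfrac12 e^{2t_2}\int w^2 e^{2t}$ for $N\ge5$); the asymptotics (\ref{w}) and (\ref{n=3})--(\ref{n=5}) for $\phi_{j,t_j}$ are what give these explicit dimension-dependent constants.

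The hard part will be the interaction term, i.e.\ the cross contributions coming from $|w_{1,t_1}-w_{2,t_2}|^{p+1}$ and the cross product in the quadratic part of the energy. The cleanest route is to integrate by parts against the equations solved by $w_{j,t_j}$, turning the interaction into boundary-type integrals of the form $\int w_{t_1}^p w_{t_2}\,dt$, and then apply Lemma \ref{lem2.1} (and Lemma \ref{lem-B}) to extract the leading factor $w^\theta(|t_1-t_2|)\int w^\eta e^{\theta\sqrt{\gamma_0}t}$. This yields the $e^{-(N-2)|t_1-t_2|/2}A_{\epsilon,N}\int w^p e^{(N-2)t/2}$ interaction terms. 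The delicate point is bookkeeping: one must verify that all mixed terms not displayed in the statement (cross terms between $w_{t_j}$ and $\phi_{j,t_j}$, products of two $\phi$'s, and the difference between $\int w_{t_1}^p w_{t_2}$ and its $w$-based asymptotic value) are genuinely of order $o$ of the retained interaction and boundary terms, using that $\beta$, $e^{t_2}$ (resp. $-t_2e^{2t_2}$, $e^{2t_2}$) and the interaction scale are comparable on $\Lambda$ by Lemma \ref{basic lemma}. Collecting the leading self-energy, boundary, and interaction terms gives $\widetilde{K}_\epsilon(\mathbf{t})$, and absorbing everything else into the error completes the proof.
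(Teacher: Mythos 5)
Your proposal is correct and follows essentially the same route as the paper: the paper likewise writes $K_\epsilon(\textbf{t})=E_\epsilon[w_{\epsilon,\textbf{t}}]+K_1+K_2-K_3$ (linear, quadratic, and higher-order terms in $\phi_{\epsilon,\textbf{t}}$), shows each $K_i=o\big(\beta+e^{t_2}+e^{-|t_1-t_2|/2}\big)$ using Lemma \ref{lem2.3}, Lemma \ref{phi} and the orthogonality conditions, and then expands the main term $E_\epsilon[w_{\epsilon,\textbf{t}}]$ in Appendix B exactly as you describe, via the equations for $w_{j,t_j}$, the interval splitting, inequality (\ref{ieq-1}), and Lemmas \ref{lem2.1} and \ref{lem-B}. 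The only cosmetic differences are a harmless sign convention in your quadratic term (the paper's $K_2$ equals $-\tfrac12\langle L_{\epsilon,\textbf{t}}[\phi],\phi\rangle_\epsilon$) and that the paper controls $K_2$ by substituting the equation for $\phi_{\epsilon,\textbf{t}}$ rather than quoting an $H$-norm bound; both yield the same $O(K_\epsilon^2)=o(\beta)$ estimate, which works precisely because $\tau>\tfrac12$.
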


\begin{proof}
Here again, we only give the proof for $N=3$. By the definition of $K_\epsilon({\bf t})$, we can re-write it as
\begin{align}\label{K0}
K_\epsilon(\textbf{t})=E_\epsilon[w_{\epsilon,\textbf{t}}]+K_1+K_2-K_3,
\end{align}
where
\begin{align*}
K_1=\int_{-\infty}^\infty\Big[w_{\epsilon,\textbf{t}}'\phi_{\epsilon,\textbf{t}}'
+(\gamma+e^{2t})w_{\epsilon,\textbf{t}}\phi_{\epsilon,\textbf{t}}\Big]e^{-\beta
t}\,dt
-\int_{-\infty}^\infty|w_{\epsilon,\textbf{t}}|^{p-1}w_{\epsilon,\textbf{t}}\phi_{\epsilon,\textbf{t}}e^{-\beta
t}\,dt;
\end{align*}
\begin{align*}
K_2=\frac{1}{2}\int_{-\infty}^\infty\Big[|\phi_{\epsilon,\textbf{t}}'|^2
+(\gamma+e^{2t})|\phi_{\epsilon,\textbf{t}}|^2
-p|w_{\epsilon,\textbf{t}}|^{p-1}|\phi_{\epsilon,\textbf{t}}|^2\Big]e^{-\beta
t}\,dt;
\end{align*}
\begin{align*}
K_3&=\frac{1}{p+1}\int_{-\infty}^\infty\Big[|w_{\epsilon,\textbf{t}}+\phi_{\epsilon,\textbf{t}}|^{p+1}
-|w_{\epsilon,\textbf{t}}|^{p+1}
-(p+1)|w_{\epsilon,\textbf{t}}|^{p-1}w_{\epsilon,\textbf{t}}\phi_{\epsilon,\textbf{t}}
\\
&-\frac{1}{2}p(p+1)|w_{\epsilon,\textbf{t}}|^{p-1}|\phi_{\epsilon,\textbf{t}}|^2
\Big]e^{-\beta t}\,dt.
\end{align*}

Integrating by parts and using Lemma \ref{lem2.3}, \ref{phi}, we have
\begin{align}\label{K1}
|K_1|=\Big|-\int_{-\infty}^\infty
S_\epsilon[w_{\epsilon,\textbf{t}}]\phi_{\epsilon,\textbf{t}}e^{-\beta
t}\,dt\Big|
=o\Big(\big[\beta+e^{t_2}+e^{-|t_1-t_2|/2}\big]\Big).
\end{align}

\medskip

To estimate $K_2$, note that $\phi_{\epsilon,\textbf{t}}$
satisfies
\begin{align}\label{eq5.4}
&\phi_{\epsilon,\textbf{t}}''-\beta\phi_{\epsilon,\textbf{t}}'-(\gamma+e^{2t})\phi_{\epsilon,\textbf{t}}\nonumber\\
&=-|w_{\epsilon,\textbf{t}}+\phi_{\epsilon,\textbf{t}}|^{p-1}(w_{\epsilon,\textbf{t}}+\phi_{\epsilon,\textbf{t}})
+|w_{\epsilon,\textbf{t}}|^{p-1}w_{\epsilon,\textbf{t}}-S_\epsilon[w_{\epsilon,\textbf{t}}]
+\sum_{j=1}^2c_jZ_{\epsilon,t_j}.
\end{align}
Integrating by parts and using the orthogonality condition (\ref{eq4.2}), we have
\begin{align*}
2K_2&=\int_{-\infty}^\infty\Big[|w_{\epsilon,\textbf{t}}+\phi_{\epsilon,\textbf{t}}|^{p-1}(w_{\epsilon,\textbf{t}}+\phi_{\epsilon,\textbf{t}})
-|w_{\epsilon,\textbf{t}}|^{p-1}w_{\epsilon,\textbf{t}}\\
&-p|w_{\epsilon,\textbf{t}}|^{p-1}\phi_{\epsilon,\textbf{t}}
+S_\epsilon[w_{\epsilon,\textbf{t}}]\Big]
\phi_{\epsilon,\textbf{t}}e^{-\beta t}\,dt.
\end{align*}
By the mean value theorem and inequality (\ref{ieq-1}) we get
\begin{align*}
\Big||w_{\epsilon,\textbf{t}}+\phi_{\epsilon,\textbf{t}}|^{p-1}(w_{\epsilon,\textbf{t}}+\phi_{\epsilon,\textbf{t}})
-|w_{\epsilon,\textbf{t}}|^{p-1}w_{\epsilon,\textbf{t}}
-p|w_{\epsilon,\textbf{t}}|^{p-1}\phi_{\epsilon,\textbf{t}}
\Big|\leq
C|\phi_{\epsilon,\textbf{t}}|^{\min\{p,2\}}.
\end{align*}
So using Lemma \ref{lem2.3} and \ref{phi} we deduce
\begin{align}\label{K2}
K_2=o\Big(\big[\beta+e^{t_2}+e^{-|t_1-t_2|/2}\big]\Big).
\end{align}

\medskip

For $K_3$, using the mean value theorem and inequality (\ref{ieq-1}),
\begin{align*}
&\Big||w_{\epsilon,\textbf{t}}+\phi_{\epsilon,\textbf{t}}|^{p+1}
-|w_{\epsilon,\textbf{t}}|^{p+1}
-(p+1)|w_{\epsilon,\textbf{t}}|^{p-1}w_{\epsilon,\textbf{t}}\phi_{\epsilon,\textbf{t}}
\\
&-\frac{1}{2}p(p+1)|w_{\epsilon,\textbf{t}}|^{p-1}|\phi_{\epsilon,\textbf{t}}|^2\Big|
\leq C|\phi_{\epsilon,\textbf{t}}|^{\min\{p+1,3\}}.
\end{align*}
So, again, using Lemma \ref{lem2.3} and \ref{phi} it follows that
\begin{align}\label{K3}
K_3=o\Big(\big[\beta+e^{t_2}+e^{-|t_1-t_2|/2}\big]\Big).
\end{align}
Combing with (\ref{K0}), (\ref{K1}), (\ref{K2}), (\ref{K3}), and the estimates in Appendix B, we obtain the desired estimates.

\end{proof}

\medskip

We will end this section with a reduction lemma which is important for both the existence and uniqueness:

\begin{lem}\label{crucial lemma}
$v_{\epsilon,\textbf{t}}=w_{\epsilon,\textbf{t}}+\phi_{\epsilon,\textbf{t}}$ is a
critical point of $E_\epsilon$ if and only if $\textbf{t}$ is a
critical point of $K_\epsilon$ in $\Lambda$.
\end{lem}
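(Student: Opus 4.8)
The plan is to exploit the variational structure already in place: critical points of $E_\epsilon$ in $H$ are exactly the solutions of $S_\epsilon[v]=0$, since an integration by parts against the weight $e^{-\beta t}$ gives the identity $E_\epsilon'(v)[\psi]=-\langle S_\epsilon[v],\psi\rangle_\epsilon$ for every $\psi\in H$. By Proposition \ref{pop4.2}, the function $v_{\epsilon,\textbf{t}}=w_{\epsilon,\textbf{t}}+\phi_{\epsilon,\textbf{t}}$ solves $S_\epsilon[v_{\epsilon,\textbf{t}}]=\sum_{j=1}^2 c_j(\textbf{t})Z_{\epsilon,t_j}$ subject to $\langle\phi_{\epsilon,\textbf{t}},Z_{\epsilon,t_j}\rangle_\epsilon=0$. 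Consequently $v_{\epsilon,\textbf{t}}$ is a genuine critical point of $E_\epsilon$ if and only if both Lagrange multipliers vanish, $c_1(\textbf{t})=c_2(\textbf{t})=0$. So the lemma reduces to showing that, for $\epsilon$ small, $\nabla K_\epsilon(\textbf{t})=0$ is equivalent to $c_1(\textbf{t})=c_2(\textbf{t})=0$.

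First I would differentiate $K_\epsilon(\textbf{t})=E_\epsilon[v_{\epsilon,\textbf{t}}]$, using the $C^1$ dependence $\textbf{t}\mapsto\phi_{\epsilon,\textbf{t}}$ from Proposition \ref{pop4.2}. The chain rule together with the identity above and the linearity of $\langle\,\cdot\,,\cdot\,\rangle_\epsilon$ yields
\begin{align*}
\partial_{t_i}K_\epsilon(\textbf{t})
=E_\epsilon'(v_{\epsilon,\textbf{t}})\big[\partial_{t_i}w_{\epsilon,\textbf{t}}+\partial_{t_i}\phi_{\epsilon,\textbf{t}}\big]
=-\sum_{j=1}^2 c_j(\textbf{t})\,\big\langle Z_{\epsilon,t_j},\,\partial_{t_i}w_{\epsilon,\textbf{t}}+\partial_{t_i}\phi_{\epsilon,\textbf{t}}\big\rangle_\epsilon.
\end{align*}
Setting $M_{ji}(\textbf{t})=\langle Z_{\epsilon,t_j},\,\partial_{t_i}w_{\epsilon,\textbf{t}}+\partial_{t_i}\phi_{\epsilon,\textbf{t}}\rangle_\epsilon$, this reads $\nabla K_\epsilon(\textbf{t})=-M(\textbf{t})^{T}(c_1(\textbf{t}),c_2(\textbf{t}))$, so the whole statement follows once $M(\textbf{t})$ is shown to be invertible, uniformly for $\textbf{t}\in\Lambda$ and $\epsilon$ small.

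To prove invertibility I would compute the leading order of $M$. From the definition (\ref{z-1-1}) of $Z_{\epsilon,t_j}$, an integration by parts gives the key identity $\langle Z_{\epsilon,t_j},\psi\rangle_\epsilon=(\partial_{t_j}w_{\epsilon,\textbf{t}},\psi)_\epsilon$ for all $\psi\in H$ (this is exactly the equivalence of orthogonalities noted just before (\ref{z-1-1})). Taking $\psi=\partial_{t_i}w_{\epsilon,\textbf{t}}$ yields the main part $(\partial_{t_j}w_{\epsilon,\textbf{t}},\partial_{t_i}w_{\epsilon,\textbf{t}})_\epsilon$; by (\ref{eqw}), Lemma \ref{lem-4} and the interaction estimates of Lemma \ref{lem2.1} its diagonal entries are bounded below by a fixed positive constant while its off-diagonal entries are exponentially small because $|t_1-t_2|\to\infty$. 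The remaining contribution $\langle Z_{\epsilon,t_j},\partial_{t_i}\phi_{\epsilon,\textbf{t}}\rangle_\epsilon$ I would control by differentiating the constraint $\langle\phi_{\epsilon,\textbf{t}},Z_{\epsilon,t_j}\rangle_\epsilon=0$ in $t_i$, which gives $\langle\partial_{t_i}\phi_{\epsilon,\textbf{t}},Z_{\epsilon,t_j}\rangle_\epsilon=-\langle\phi_{\epsilon,\textbf{t}},\partial_{t_i}Z_{\epsilon,t_j}\rangle_\epsilon=o(1)$ by the smallness of $\|\phi_{\epsilon,\textbf{t}}\|$ in Proposition \ref{pop4.2}. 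Thus $M(\textbf{t})$ is diagonally dominant with nonvanishing diagonal and hence invertible, giving the claimed equivalence $\nabla K_\epsilon(\textbf{t})=0\iff c_1(\textbf{t})=c_2(\textbf{t})=0\iff S_\epsilon[v_{\epsilon,\textbf{t}}]=0$.

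The main obstacle is precisely this invertibility step: it needs both the quantitative smallness and the $C^1$ estimate of $\phi_{\epsilon,\textbf{t}}$ supplied by Proposition \ref{pop4.2}, and it requires genuine care with the weighted Hilbert space, since $w\notin H$ when $N=3,4$. All the inner products must therefore be read through the admissible functions $w_{j,t_j}$ and $Z_{\epsilon,t_j}$ rather than through $w$ itself, and one must verify that the interaction estimates of Lemma \ref{lem2.1} survive the $e^{2t}$ and $e^{-\beta t}$ weights when evaluating the entries of $M(\textbf{t})$.
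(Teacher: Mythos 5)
Your proposal is correct and follows essentially the same route as the paper's proof: both differentiate $K_\epsilon$ to get $\partial_{t_i}K_\epsilon(\textbf{t})=-\sum_j c_j(\textbf{t})\langle Z_{\epsilon,t_j},\partial_{t_i}w_{\epsilon,\textbf{t}}+\partial_{t_i}\phi_{\epsilon,\textbf{t}}\rangle_\epsilon$, and both establish the invertibility of the matrix $M$ by exactly the two estimates you give — differentiating the orthogonality constraint to get $\langle Z_{\epsilon,t_j},\partial_{t_i}\phi_{\epsilon,\textbf{t}}\rangle_\epsilon=-\langle\phi_{\epsilon,\textbf{t}},\partial_{t_i}Z_{\epsilon,t_j}\rangle_\epsilon=o(1)$, and the leading-order diagonal computation $\langle Z_{\epsilon,t_i},\partial_{t_j}w_{\epsilon,\textbf{t}}\rangle_\epsilon=\delta_{ij}\,p\int w^{p-1}|w'|^2\,dt+o(1)$ yielding diagonal dominance.
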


\begin{proof}
The proof follows from the proofs in \cite{GWW}, \cite{W}. For the sake of completeness, we include a proof here.

\medskip

By Proposition \ref{pop4.2}, there exists an $\epsilon_0$ such that,
for $0<\epsilon<\epsilon_0$, we have a $C^1$ map
$\textbf{t}\rightarrow\phi_{\epsilon,\textbf{t}}$ from $\Lambda$
into $H$ such that
\begin{align}\label{eq-v-phi}
S_\epsilon[v_{\epsilon,\textbf{t}}]=\sum_{j=1}^2c_j(\textbf{t})Z_{\epsilon,t_j},\quad v_{\epsilon,\textbf{t}}=w_{\epsilon,\textbf{t}}+\phi_{\epsilon,\textbf{t}},
\end{align}
for some constants $c_j$, which also are of class $C^1$ in
$\textbf{t}$.

\medskip

First integrating by parts we get
\begin{align}\label{equiv-1}
\partial_{t_j}K_\epsilon(\textbf{t})
&=\int_{-\infty}^\infty\Big[v_{\epsilon,\textbf{t}}'\big(\partial_{t_j}w_{\epsilon,\textbf{t}}
 +\partial_{t_j}\phi_{\epsilon,\textbf{t}}\big)'+(\gamma+e^{2t})v_{\epsilon,\textbf{t}}
 \big(\partial_{t_j}w_{\epsilon,\textbf{t}}+\partial_{t_j}\phi_{\epsilon,\textbf{t}}\big)
 \Big]e^{-\beta t}\,dt\nonumber\\
&\quad\quad-\int_{-\infty}^\infty\big|v_{\epsilon,\textbf{t}}\big|^{p-1}v_{\epsilon,\textbf{t}}
 \big(\partial_{t_j}w_{\epsilon,\textbf{t}}+\partial_{t_j}\phi_{\epsilon,\textbf{t}}\big)e^{-\beta t}\,dt\\
&=-\int_{-\infty}^\infty
S_\epsilon[v_{\epsilon,\textbf{t}}]
\big(\partial_{t_j}w_{\epsilon,\textbf{t}}+\partial_{t_j}\phi_{\epsilon,\textbf{t}}\big)e^{-\beta
t}\,dt.\nonumber
\end{align}

If $v_{\epsilon,\textbf{t}}=w_{\epsilon,\textbf{t}}+\phi_{\epsilon,\textbf{t}}$
is a critical point of $E_\epsilon$, then $S_\epsilon[v_{\epsilon,\textbf{t}}]=0$. By (\ref{equiv-1}) we get
\begin{align*}
\partial_{t_j}K_\epsilon(\textbf{t})=-\int_{-\infty}^\infty
S_\epsilon[v_{\epsilon,\textbf{t}}]\big(\partial_{t_j}w_{\epsilon,\textbf{t}}+\partial_{t_j}\phi_{\epsilon,\textbf{t}}\big)e^{-\beta
t}\,dt=0,
\end{align*}
which means that $\textbf{t}$ is a critical point of $K_\epsilon$.

\medskip

On the other hand, let $\textbf{t}_\epsilon\in\Lambda$ be a critical point of $K_\epsilon$, that is $\partial_{t_j}K_\epsilon(\textbf{t}_\epsilon)=0$, $j=1,2$, by (\ref{equiv-1}) we get
\begin{align*}
0=\partial_{t_j}K_\epsilon(\textbf{t}_\epsilon)=-\int_{-\infty}^\infty
S_\epsilon[v_{\epsilon,\textbf{t}_\epsilon}]
\big(\partial_{t_j}w_{\epsilon,\textbf{t}_\epsilon}+\partial_{t_j}\phi_{\epsilon,\textbf{t}_\epsilon}\big)e^{-\beta
t}\,dt
\end{align*}
for $j=1,2$.
Hence by (\ref{eq-v-phi}) we have
\begin{align*}
\sum_{i=1}^2c_i(\textbf{t}_\epsilon)\int_{-\infty}^\infty
Z_{\epsilon,t_{\epsilon,i}}
\big(\partial_{t_j}w_{\epsilon,\textbf{t}_\epsilon}+\partial_{t_j}\phi_{\epsilon,\textbf{t}_\epsilon}\big)e^{-\beta
t}\,dt=0.
\end{align*}
By Proposition \ref{pop4.2} and  the fact
$\langle\phi_{\epsilon,\textbf{t}_\epsilon},Z_{\epsilon,t_{\epsilon,i}}\rangle_\epsilon=0$,
\begin{align}\label{phi-z}
\langle
Z_{\epsilon,t_{\epsilon,i}},\partial_{t_j}\phi_{\epsilon,\textbf{t}_\epsilon}\rangle_\epsilon
=-\langle\phi_{\epsilon,\textbf{t}_\epsilon},\partial_{t_j}Z_{\epsilon,t_{\epsilon,i}}\rangle_\epsilon
=o(1).
\end{align}
On the other hand,
\begin{align}\label{z-w}
\int_{-\infty}^\infty Z_{\epsilon,t_{\epsilon,i}}
\partial_{t_j}w_{\epsilon,\textbf{t}_\epsilon}e^{-\beta
t}\,dt=\langle
Z_{\epsilon,t_{\epsilon,i}},\partial_{t_j}w_{\epsilon,\textbf{t}_\epsilon}\rangle_\epsilon
=\delta_{ij}p\int_{-\infty}^\infty w^{p-1}|w'|^2\,dt+o(1).
\end{align}
By (\ref{phi-z}) and (\ref{z-w}), the matrix
$$\int_{-\infty}^\infty
Z_{\epsilon,t_{\epsilon,i}}
\big(\partial_{t_j}w_{\epsilon,\textbf{t}_\epsilon}+\partial_{t_j}\phi_{\epsilon,\textbf{t}_\epsilon}\big)e^{-\beta
t}\,dt$$ is diagonally dominant and thus is non-singular, which
implies $c_i(\textbf{t}_\epsilon)=0$ for $i=1,2$. Hence
$v_{\epsilon,\textbf{t}_\epsilon}=w_{\epsilon,\textbf{t}_\epsilon}+\phi_{\epsilon,\textbf{t}_\epsilon}$ is a critical point of $E_\epsilon$. This finishes the proof.

\end{proof}

\begin{rk}
Note that in the proof the theorem, we assume that the solution
$v_\epsilon$ of equation (\ref{eq-v}) can be written as
$v_\epsilon=w_{\epsilon,\textbf{t}}+\phi_\epsilon$ with
$\phi_\epsilon$ satisfying
\begin{align}\label{ortho}
\langle\phi_\epsilon,Z_{\epsilon,t_j}\rangle_\epsilon=0,\ j=1,2.
\end{align}

\medskip

In general, using (\ref{z-w}) we can decompose
\begin{align*}
\phi_\epsilon=\overline{\phi_\epsilon}+\sum_{j=1}^2
d_j\partial_{t_j}w_{\epsilon,\textbf{t}},
\end{align*}
where $\overline{\phi_\epsilon}$ satisfies (\ref{ortho}) and
$d_j=O(\|\phi_\epsilon\|_\infty)$. Thus we can write
\begin{align*}
v_\epsilon=w_{\epsilon,\textbf{t}}+\sum_{j=1}^2 d_j\partial_{t_j}w_{\epsilon,\textbf{t}}+\overline{\phi_\epsilon}
\end{align*}
and get the desired result using the same argument for $w_{\epsilon,\textbf{t}}+\sum\limits_{j=1}^2d_j\partial_{t_j}w_{\epsilon,\textbf{t}}.$
\end{rk}

%~~~~~~~~~~~~~~~~~~~~~~~~~~~~~~~~~~~~~~~~~~~~~~~~~~~~~~~~~~~~~~~~~~~~~~~~~~~~~~~~~~~~~~~~~~
%~~~~~~~~~~~~~~~~~~~~~~~~~~~~~~~~~~~~~~~~~~~~~~~~~~~~~~~~~~~~~~~~~~~~~~~~~~~~~~~~~~~~~~~~~~~~~~~
%~~~~~~~~~~~~~~~~~~~~~~~~~~~~~~~~~~~~~~~~~~~~~~~~~~~~~~~~~~~~~~~~~~~~~~~~~~~~~~~~~~~~~~~~~~~~~~~
\section{The uniqueness result}

By Lemma \ref{crucial lemma}, the number of sign-changing once solutions of (\ref{eq-v}) equals to the number of critical points of $K_\epsilon(\textbf{t})$. To count the number of critical points of $K_\epsilon(\textbf{t})$, we need to compute $\partial K_\epsilon(\textbf{t})$ and $\partial^2
K_\epsilon(\textbf{t})$.

\medskip

Recall that $K_\epsilon(\textbf{t})$ and $\widetilde{K}_\epsilon(\textbf{t})$ are defined in (\ref{eq5.1}) and Lemma \ref{lem-K}. The crucial estimate to prove uniqueness of $v_\epsilon$ and $u_\epsilon$ is the following Proposition:

\begin{pop}\label{pop4.1}
$K_\epsilon(\textbf{t})$ is of $C^2$ in $\Lambda$ and for $\epsilon$
sufficiently small, we have
\begin{enumerate}[(1)]
\item $K_\epsilon(\textbf{t})-\widetilde{K}_\epsilon(\textbf{t})=o(\beta)$;

\item $\partial K_\epsilon(\textbf{t})-\partial \widetilde{K}_\epsilon(\textbf{t})=o(\beta)$ uniformly for $\textbf{t}\in\Lambda$;

\item if $\textbf{t}_\epsilon\in\Lambda$ is a critical point of $K_\epsilon$, then
\begin{eqnarray}\label{diff2}
\partial^2K_\epsilon(\textbf{t}_\epsilon)-\partial^2
\widetilde{K}_\epsilon(\textbf{t}_\epsilon)=o(\beta).
\end{eqnarray}
\end{enumerate}
\end{pop}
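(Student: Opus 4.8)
The plan is to deduce (1) directly from the energy expansion already in hand, and to obtain (2) and (3) by differentiating the variational identity behind the reduction, with the decisive simplification in (3) coming from the vanishing of $S_\epsilon$ at a critical point. \emph{Assertion (1)} I would read off immediately by combining Lemma \ref{lem-K} with the definition of $\Lambda$: on $\Lambda$ the competing small quantities are all comparable to $\beta$, since for $N=3$ one has $e^{t_2}\sim\beta$ and $e^{-|t_1-t_2|/2}\sim\beta$ (with the evident analogues $t_2e^{2t_2}$, $e^{2t_2}$ and $e^{-(N-2)|t_1-t_2|/2}$ for $N=4$ and $N\ge5$). Hence the remainder $o(e^{t_2})+o(e^{-|t_1-t_2|/2})$ in Lemma \ref{lem-K} is in fact $o(\beta)$, which is exactly (1).

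\emph{Assertion (2).} I would start from the identity (\ref{equiv-1}), valid for every $\textbf{t}\in\Lambda$,
\[
\partial_{t_j}K_\epsilon(\textbf{t})=-\big\langle S_\epsilon[v_{\epsilon,\textbf{t}}],\,\partial_{t_j}w_{\epsilon,\textbf{t}}+\partial_{t_j}\phi_{\epsilon,\textbf{t}}\big\rangle_\epsilon,\qquad v_{\epsilon,\textbf{t}}=w_{\epsilon,\textbf{t}}+\phi_{\epsilon,\textbf{t}},
\]
and use $S_\epsilon[v_{\epsilon,\textbf{t}}]=S_\epsilon[w_{\epsilon,\textbf{t}}]+L_{\epsilon,\textbf{t}}[\phi_{\epsilon,\textbf{t}}]+N_\epsilon[\phi_{\epsilon,\textbf{t}}]$. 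The leading contribution is $-\langle S_\epsilon[w_{\epsilon,\textbf{t}}],\partial_{t_j}w_{\epsilon,\textbf{t}}\rangle_\epsilon=\partial_{t_j}E_\epsilon[w_{\epsilon,\textbf{t}}]$, which I would expand by the same interaction-integral method used on $\int S_\epsilon[w_{\epsilon,\textbf{t}}]w_{j,t_j}'\,dt$ in the proof of Lemma \ref{basic lemma}; differentiating each such integral with the help of Lemma \ref{lem2.1}, Lemma \ref{lem-B} and the bound $|w_{t_j}'|\le c_1 w_{t_j}$ from (\ref{w-phi-2}) multiplies it only by a bounded factor, so the $o(\beta)$ error survives and this term equals $\partial_{t_j}\widetilde{K}_\epsilon+o(\beta)$. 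For the remaining contributions I would first establish the uniform derivative bound $\|\partial_{t_j}\phi_{\epsilon,\textbf{t}}\|_\infty=O(\beta)$ on $\Lambda$, by differentiating the projected equation (\ref{eq4.2}) in $t_j$ and re-applying the linear estimate of Proposition \ref{pop3.1}; then, using Lemma \ref{lem2.3}, Lemma \ref{lem4.1}, the orthogonality $\langle\phi_{\epsilon,\textbf{t}},Z_{\epsilon,t_j}\rangle_\epsilon=0$ and (\ref{z-w}), every term containing a factor $\phi_{\epsilon,\textbf{t}}$ or $\partial_{t_j}\phi_{\epsilon,\textbf{t}}$ is a product of two $O(\beta)$ quantities (or is of higher order $O(\|\phi_{\epsilon,\textbf{t}}\|_\infty^{\min\{p,2\}})$), hence $o(\beta)$.

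\emph{Assertion (3).} The crucial point is that, by Lemma \ref{crucial lemma}, a critical point $\textbf{t}_\epsilon\in\Lambda$ of $K_\epsilon$ satisfies $c_1(\textbf{t}_\epsilon)=c_2(\textbf{t}_\epsilon)=0$, so $S_\epsilon[v_{\epsilon,\textbf{t}_\epsilon}]=0$; that is, $v_{\epsilon,\textbf{t}_\epsilon}$ solves (\ref{eq-v}) exactly. Differentiating (\ref{equiv-1}) once more yields
\[
\partial_{t_i}\partial_{t_j}K_\epsilon=-\big\langle DS_\epsilon[v_{\epsilon,\textbf{t}}]\big[\partial_{t_i}v_{\epsilon,\textbf{t}}\big],\,\partial_{t_j}v_{\epsilon,\textbf{t}}\big\rangle_\epsilon-\big\langle S_\epsilon[v_{\epsilon,\textbf{t}}],\,\partial_{t_i}\partial_{t_j}v_{\epsilon,\textbf{t}}\big\rangle_\epsilon,
\]
and at $\textbf{t}_\epsilon$ the second term drops out since $S_\epsilon[v_{\epsilon,\textbf{t}_\epsilon}]=0$; in particular no second-order derivative bound on $\phi$ is needed. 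It then remains to replace $v_{\epsilon,\textbf{t}_\epsilon}$ by $w_{\epsilon,\textbf{t}_\epsilon}$ and $\partial_{t_i}v$ by $\partial_{t_i}w$ in the bilinear form, the errors being controlled by the $O(\beta)$ bounds on $\phi_{\epsilon,\textbf{t}_\epsilon}$ and $\partial_{t_i}\phi_{\epsilon,\textbf{t}_\epsilon}$ from (2), and to evaluate the leading form $-\langle L_{\epsilon,\textbf{t}_\epsilon}[\partial_{t_i}w_{\epsilon,\textbf{t}_\epsilon}],\partial_{t_j}w_{\epsilon,\textbf{t}_\epsilon}\rangle_\epsilon$ explicitly via Lemma \ref{lem2.1} and Lemma \ref{lem-B}, matching it term by term with the Hessian $\partial^2\widetilde{K}_\epsilon(\textbf{t}_\epsilon)$ read off from the formulas of Lemma \ref{lem-K}; this gives (\ref{diff2}).

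The main obstacle is verifying that one and two $t_j$-differentiations do not degrade the $o(\beta)$ remainder of Lemma \ref{lem-K}. This rests on two things: the uniform derivative bound $\|\partial_{t_j}\phi_{\epsilon,\textbf{t}}\|_\infty=O(\beta)$ on $\Lambda$, coming from the differentiated projected equation together with the linear theory, and a term-by-term check that differentiating each interaction integral reproduces a quantity of the same (or smaller) order. It is precisely here that the critical-point hypothesis in (3) is indispensable: without $S_\epsilon[v_{\epsilon,\textbf{t}_\epsilon}]=0$ the term $\langle S_\epsilon[v_{\epsilon,\textbf{t}}],\partial_{t_i}\partial_{t_j}v_{\epsilon,\textbf{t}}\rangle_\epsilon$ would be only $O(\beta)$ and would require control of $\partial_{t_i}\partial_{t_j}\phi_{\epsilon,\textbf{t}}$, leaving an unwanted leftover of order $\beta$.
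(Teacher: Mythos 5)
Parts (1) and (2) of your plan are essentially the paper's own argument: (1) is read off from Lemma \ref{lem-K} together with the definition of $\Lambda$, and for (2) the paper likewise starts from $\partial_{t_j}K_\epsilon(\textbf{t})=-\int S_\epsilon[v_{\epsilon,\textbf{t}}]\big(\partial_{t_j}w_{\epsilon,\textbf{t}}+\partial_{t_j}\phi_{\epsilon,\textbf{t}}\big)e^{-\beta t}\,dt$ and evaluates the pairing with $\partial_{t_j}w_{\epsilon,\textbf{t}}$ by the computations of Lemma \ref{basic lemma}. Two corrections, though. First, the bounds you claim, $\|\phi_{\epsilon,\textbf{t}}\|_\infty=O(\beta)$ and $\|\partial_{t_j}\phi_{\epsilon,\textbf{t}}\|_\infty=O(\beta)$, are not available: Proposition \ref{pop4.2} on $\Lambda$ only gives order $\beta^{\tau}$ with $\tfrac12<\tau<1$ (since $e^{t_2}\sim\beta$ gives $e^{\tau t_2}\sim\beta^{\tau}$). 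Your conclusion in (2) survives only because the discarded terms are genuinely \emph{quadratic} in small quantities, hence $O(\beta^{2\tau})=o(\beta)$; note that the paper avoids any derivative bound on $\phi$ altogether by using $S_\epsilon[v_{\epsilon,\textbf{t}}]=\sum_i c_iZ_{\epsilon,t_i}$ and converting $\int Z_{\epsilon,t_i}\partial_{t_j}\phi_{\epsilon,\textbf{t}}e^{-\beta t}\,dt$ into $-\int \phi_{\epsilon,\textbf{t}}\partial_{t_j}Z_{\epsilon,t_i}e^{-\beta t}\,dt$ via the orthogonality constraint.

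The genuine gap is in (3). After the (correct) use of $S_\epsilon[v_{\epsilon,\textbf{t}_\epsilon}]=0$, you replace the bilinear form $-\langle DS_\epsilon[v][\partial_{t_i}v],\partial_{t_j}v\rangle_\epsilon$ by $-\langle L_{\epsilon,\textbf{t}}[\partial_{t_i}w_{\epsilon,\textbf{t}}],\partial_{t_j}w_{\epsilon,\textbf{t}}\rangle_\epsilon$, i.e.\ you also replace the linearization at $v=w_{\epsilon,\textbf{t}}+\phi_{\epsilon,\textbf{t}}$ by the linearization at $w_{\epsilon,\textbf{t}}$, asserting that the errors are controlled by the bounds on $\phi$ and $\partial_{t_i}\phi$. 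This step fails, because the corresponding error is \emph{linear} in $\phi_{\epsilon,\textbf{t}}$ against a weight that is not small:
\begin{equation*}
\int p\big(|v_{\epsilon,\textbf{t}}|^{p-1}-|w_{\epsilon,\textbf{t}}|^{p-1}\big)\partial_{t_i}w_{\epsilon,\textbf{t}}\,\partial_{t_j}w_{\epsilon,\textbf{t}}\,e^{-\beta t}\,dt
\approx \int p(p-1)|w_{\epsilon,\textbf{t}}|^{p-2}\,\phi_{\epsilon,\textbf{t}}\,\partial_{t_i}w_{\epsilon,\textbf{t}}\,\partial_{t_j}w_{\epsilon,\textbf{t}}\,dt .
\end{equation*}
A single factor of $\phi_{\epsilon,\textbf{t}}$ makes this a priori only $O(\beta^{\tau})$ (and even with your claimed $O(\beta)$ bound it would still be $O(\beta)$, not $o(\beta)$, which is the precision (\ref{diff2}) demands). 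In fact this term is \emph{exactly} of order $\beta$: it is the term $T_2$ of Appendix C, which the paper evaluates, via the identity $p(p-1)w_{t_1}^{p-2}(w_{t_1}')^2=-L_\epsilon[\partial_{t_1}^2w_{1,t_1}]+o(\beta)$, self-adjointness to throw the operator onto $\phi_{\epsilon,\textbf{t}}$, and the equation $L_\epsilon[\phi_{\epsilon,\textbf{t}}]=\beta w_{t_1}'+pw_{t_1}^{p-1}w_{t_2}+o(\beta)$ on $I_1$, obtaining $T_2=-\int w_{t_1}''\,pw_{t_1}^{p-1}w_{t_2}\,dt+o(\beta)$. This order-$\beta$ quantity (its limiting coefficient is a nonzero multiple of $\int pw^{p-1}w''e^{t/2}\,dt$, which one can check does not vanish for the explicit $w$) cancels the matching piece of the pure-$w$ term $T_1$, so that $T_1+T_2=-\int w_{t_1}''w_{t_2}^p\,dt+o(\beta)$, which is what produces the correct Hessian (\ref{w-L-w}) agreeing with $\partial^2\widetilde{K}_\epsilon$. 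Dropping it, as your outline does, yields a Hessian that differs from $\partial^2\widetilde{K}_\epsilon(\textbf{t}_\epsilon)$ at order $\beta$, so (\ref{diff2}) fails and the degree count in the proof of Theorem \ref{thm-unique} breaks down. In short: in the first derivative the $\phi$-linear terms are saved because they pair against approximate kernel elements of $L_{\epsilon,\textbf{t}}$, but in the second derivative they pair against $p(p-1)|w|^{p-2}(\partial_{t_i}w)(\partial_{t_j}w)$, which is not an approximate kernel direction, and this is precisely the content of Appendix C that your proposal omits.
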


\medskip

The proof of Proposition \ref{pop4.1} will be delayed until the end of this section. Let us now use it to prove the uniqueness of $v_\epsilon$.

\medskip
\noindent
{\bf Proof of theorem \ref{thm-unique}}
By lemma \ref{crucial lemma}, we just need to prove that $K_\epsilon(\textbf{t})$ has only one critical point in $\Lambda$. We prove it in the following steps as in \cite{W}.

\medskip

{\bf Step 1.} By (2) of proposition \ref{pop4.1}, both $K_\epsilon(\textbf{t})$ and $\widetilde{K}_\epsilon(\textbf{t})$ have no critical points on $\partial\Lambda$ and a continuous deformation argument shows that $\partial K_\epsilon(\textbf{t})$ has the same degree as $\partial\widetilde{K}_\epsilon(\textbf{t})$ on $\Lambda$. By the definition of $\widetilde{K}_\epsilon(\textbf{t})$, we have
 $\deg(\widetilde{K}_\epsilon(\textbf{t}),\Lambda,0)=1$ and thus $\deg(\partial K_\epsilon(\textbf{t}),\Lambda,0)=1$.

\medskip

{\bf Step 2.} At each critical point $\textbf{t}_\epsilon$ of $K_\epsilon(\textbf{t})$, we have
\begin{eqnarray*}
\deg(\partial K_\epsilon(\textbf{t}),\Lambda\cap B_{\delta_\epsilon}(\textbf{t}_\epsilon),0)=1,
\end{eqnarray*}
for $\delta_\epsilon$ sufficiently small.

\medskip

This follows from (3) of Proposition \ref{pop4.1} and the fact that the eigenvalues of the matrix $\beta^{-1}(\partial_{t_i}\partial_{t_j}\widetilde{K}_\epsilon(\textbf{t}_\epsilon))$ are positive and away from $0$.

\medskip

{\bf Step 3.} From step 2, we deduce that $K_\epsilon(\textbf{t})$ has only a finite number of critical points in $\Lambda$, say, $k_\epsilon$. By the properties of degree, we have
\begin{eqnarray*}
\deg(\partial K_\epsilon(\textbf{t}),\Lambda, 0)=k_\epsilon.
\end{eqnarray*}

\medskip

By step 1, $k_\epsilon=1$ and then Theorem \ref{thm-unique} is thus proved.

\qed

\noindent
In the rest of this section, we shall prove Proposition \ref{pop4.1}.

\medskip
\noindent{\bf Proof of Proposition \ref{pop4.1}}
The proof of part (1) follows from Lemma \ref{lem-K}. We now prove part (2) of Proposition \ref{pop4.1} as follows:
\begin{align}\label{eq4.1a}
\begin{array}{lll}
\partial_{t_j} K_\epsilon(\textbf{t})&=\int\limits_{-\infty}^\infty\Big[v_{\epsilon,\textbf{t}}'
(\partial_{t_j}v_{\epsilon,\textbf{t}})'+(\gamma+e^{2t})v_{\epsilon,\textbf{t}}
\partial_{t_j}v_{\epsilon,\textbf{t}}\Big]e^{-\beta t}\,dt
 -\int\limits_{-\infty}^\infty|v_{\epsilon,\textbf{t}}|^{p-1}v_{\epsilon,\textbf{t}}
 \partial_{t_j}v_{\epsilon,\textbf{t}}e^{-\beta t}\,dt\\\\
&=-\int\limits_{-\infty}^\infty S_\epsilon[v_{\epsilon,\textbf{t}}]\partial_{t_j}v_{\epsilon,\textbf{t}}e^{-\beta t}\,dt\\\\
&=J_1+J_2,
\end{array}
\end{align}
where
\begin{align*}
J_1\equiv-\int\limits_{-\infty}^\infty S_\epsilon[w_{\epsilon,\textbf{t}}+\phi_{\epsilon,\textbf{t}}]\partial_{t_j}w_{\epsilon,\textbf{t}}e^{-\beta t}\,dt,
\end{align*}
and
\begin{align*}
J_2\equiv-\int\limits_{-\infty}^\infty S_\epsilon[w_{\epsilon,\textbf{t}}+\phi_{\epsilon,\textbf{t}}]\partial_{t_j}\phi_{\epsilon,\textbf{t}}e^{-\beta t}\,dt.
\end{align*}

\medskip

Using similar argument as in Lemma \ref{basic lemma},for $N=3$, we can obtain
\begin{align}\label{I-1}
J_1=\left\{\begin{array}{ll}
-\beta\int\limits_{-\infty}^\infty|w'|^2\,dt+\frac{1}{2}e^{(t_1-t_2)/2}A_{\epsilon,3}\int\limits_{-\infty}^\infty w^pe^{t/2}\,dt+o(\beta), j=1;\\\\
-\beta\int\limits_{-\infty}^\infty|w'|^2\,dt-\frac{1}{2}\Big[e^{(t_1-t_2)/2}-e^{t_2}\Big] A_{\epsilon,3}\int\limits_{-\infty}^\infty w^pe^{t/2}\,dt+o(\beta),j=2.
\end{array}
\right.
\end{align}

By (\ref{eq4.2}) and Proposition \ref{pop4.2},
\begin{align}\label{I-2}
\begin{array}{ll}
J_2&=-\sum\limits_{i=1}^2c_i(\textbf{t})\int\limits_{-\infty}^\infty
 Z_{\epsilon,t_i}\partial_{t_j}\phi_{\epsilon,\textbf{t}}e^{-\beta t}\,dt\\\\
&=\sum\limits_{i=1}^2c_i(\textbf{t})\int\limits_{-\infty}^\infty
\phi_{\epsilon,\textbf{t}}\partial_{t_j}Z_{\epsilon,t_i}e^{-\beta t}\,dt=o(\beta).
\end{array}
\end{align}

\medskip

Combining the above two estimates (\ref{I-1}) and (\ref{I-2}), part (2) of Proposition \ref{pop4.1} is thus proved.

\medskip

In the rest we shall prove part (3) of Proposition \ref{pop4.1}. Using (\ref{eq4.1a}),
\begin{align*}
\begin{array}{ll}
\partial_{t_i}\partial_{t_j}K_\epsilon(\textbf{t})&=
 \partial_{t_i}\Big[-\int\limits_{-\infty}^\infty S_\epsilon[v_{\epsilon,\textbf{t}}]\partial_{t_j}v_{\epsilon,\textbf{t}}e^{-\beta t}\,dt\Big]\\\\
&=-\int\limits_{-\infty}^\infty S_\epsilon[v_{\epsilon,\textbf{t}}]\partial_{t_i}\partial_{t_j}v_{\epsilon,\textbf{t}}e^{-\beta t}\,dt
 -\int\limits_{-\infty}^\infty \partial_{t_i}S_\epsilon[v_{\epsilon,\textbf{t}}]\partial_{t_j}v_{\epsilon,\textbf{t}}e^{-\beta t}\,dt.
\end{array}
\end{align*}
By (\ref{eq-v-phi}) we get
\begin{eqnarray*}
\partial_{t_i}S_\epsilon[v_{\epsilon,\textbf{t}}]=\sum_{k=1}^2c_k(\textbf{t})\partial_{t_i}Z_{\epsilon,t_{k}}+\sum_{k=1}^2\partial_{t_i}c_k(\textbf{t})Z_{\epsilon,t_{k}}
\end{eqnarray*}
Let $\textbf{t}_\epsilon$ be a critical point of $K_\epsilon(\textbf{t})$ in $\Lambda$, then
\begin{align*}
S_\epsilon[v_{\epsilon,\textbf{t}_\epsilon}]=0\quad\mathrm{and}\ c_k(\textbf{t}_\epsilon)=0,
\end{align*}
which implies
\begin{align*}
\partial_{t_i}S_\epsilon[v_{\epsilon,\textbf{t}}]\Big|_{\textbf{t}=\textbf{t}_\epsilon}
=\sum_{k=1}^2\partial_{t_i}c_k(\textbf{t}_\epsilon)Z_{\epsilon,t_{\epsilon,k}}.
\end{align*}
Note that
\begin{eqnarray}\label{ck-1}
\partial_{t_i}S_\epsilon[v_{\epsilon,\textbf{t}}]=L_\epsilon[\partial_{t_i}v_{\epsilon,\textbf{t}}]+p\big[|v_{\epsilon,\textbf{t}}|^{p-1}-|w_{\epsilon,\textbf{t}}|^{p-1}\big]\partial_{t_i}v_{\epsilon,\textbf{t}}=:\overline{L}_\epsilon[\partial_{t_i}v_{\epsilon,\textbf{t}}].
\end{eqnarray}

\medskip

As in Lemma \ref{pop3.2}, multiplying (\ref{ck-1}) by $\partial_{t_j}w_{j,t_j}$ and integrating by parts, we get $\partial_{t_i}c_k(\textbf{t}_\epsilon)=O(\beta)$. Hence
\begin{align*}
\int\limits_{-\infty}^\infty\partial_{t_i}S_\epsilon[v_{\epsilon,\textbf{t}}]
 \partial_{t_j}\phi_{\epsilon,\textbf{t}}e^{-\beta t}\,dt\Big|_{\textbf{t}=\textbf{t}_\epsilon}
 &=\sum_{k=1}^2\partial_{t_i}c_k(\textbf{t}_\epsilon)\int\limits_{-\infty}^\infty
  Z_{\epsilon,t_{\epsilon,k}}(\partial_{t_j}\phi_{\epsilon,\textbf{t}_\epsilon})e^{-\beta t}\,dt\nonumber\\
 &=-\sum_{k=1}^2\partial_{t_i}c_k(\textbf{t}_\epsilon)\int\limits_{-\infty}^\infty
  (\partial_{t_j}Z_{\epsilon,t_{\epsilon,k}})\phi_{\epsilon,\textbf{t}_\epsilon}e^{-\beta t}\,dt=o(\beta),
\end{align*}
and then
\begin{align*}
\partial_{t_i}\partial_{t_j}K_\epsilon(\textbf{t}_\epsilon)
&=-\int\limits_{-\infty}^\infty\partial_{t_i}S_\epsilon[v_{\epsilon,\textbf{t}}]
 \partial_{t_j}v_{\epsilon,\textbf{t}}e^{-\beta t}\,dt\Big|_{\textbf{t}=\textbf{t}_\epsilon}\nonumber\\
&=-\int\limits_{-\infty}^\infty \overline{L}_\epsilon[\partial_{t_i}w_{\epsilon,\textbf{t}}
 +\partial_{t_i}\phi_{\epsilon,\textbf{t}}]
 \partial_{t_j}w_{\epsilon,\textbf{t}}e^{-\beta t}\,dt\Big|_{\textbf{t}=\textbf{t}_\epsilon}+o(\beta).
\end{align*}
Note that
\begin{align*}
\int\limits_{-\infty}^\infty \overline{L}_\epsilon[\partial_{t_i}\phi_{\epsilon,\textbf{t}}]
 \partial_{t_j}w_{\epsilon,\textbf{t}}e^{-\beta t}\,dt
 =\int\limits_{-\infty}^\infty \partial_{t_i}\phi_{\epsilon,\textbf{t}}
\overline{L}_\epsilon[\partial_{t_j}w_{\epsilon,\textbf{t}}]
 e^{-\beta t}\,dt=o(\beta),
\end{align*}
since
\begin{align*}
\overline{L}_\epsilon[\partial_{t_j}w_{\epsilon,\textbf{t}}]
 =-Z_{\epsilon,t_j}+p|v_{\epsilon,\textbf{t}}|^{p-1}\partial_{t_j}w_{\epsilon,\textbf{t}}=O(\beta^{\tau}).
\end{align*}
Therefore,
\begin{align*}
\partial_{t_i}\partial_{t_j}K_\epsilon(\textbf{t}_\epsilon)
=-\int\limits_{-\infty}^\infty \overline{L}_\epsilon[\partial_{t_i}w_{\epsilon,\textbf{t}}]
 \partial_{t_j}w_{\epsilon,\textbf{t}}e^{-\beta t}\,dt\Big|_{\textbf{t}=\textbf{t}_\epsilon}+o(\beta).
\end{align*}
Using the following important estimate:
\begin{eqnarray}\label{w-L-w}
\int\limits_{-\infty}^\infty \overline{L}_\epsilon[\partial_{t_i}w_{\epsilon,\textbf{t}}]
 \partial_{t_j}w_{\epsilon,\textbf{t}}e^{-\beta t}\,dt
 =\left\{\begin{array}{lll}
-\frac{1}{4}e^{(t_1-t_2)/2}A_{\epsilon,3}\int\limits_{-\infty}^\infty w^pe^{t/2}\,dt+o(\beta),\quad\mathrm{for}\ i=j=1;\\\\
\frac{1}{4}e^{(t_1-t_2)/2}A_{\epsilon,3}\int\limits_{-\infty}^\infty w^pe^{t/2}\,dt+o(\beta),\quad\mathrm{for}\ i\neq j;\\\\
-\Big[\frac{1}{4}e^{(t_1-t_2)/2}+\frac{1}{2}e^{t_2}\Big] A_{\epsilon,3}\int\limits_{-\infty}^\infty w^pe^{t/2}\,dt+o(\beta),\quad\mathrm{for}\ i=j=2,
\end{array}
\right.
\end{eqnarray}
which will be proved in Appendix C, we get the desired result.

\qed

%~~~~~~~~~~~~~~~~~~~~~~~~~~~~~~~~~~~~~~~~~~~~~~~~~~~~~~~~~~~~~~~~~~~~~~~~~~~~~~~~~~~~~~~~~~
%~~~~~~~~~~~~~~~~~~~~~~~~~~~~~~~~~~~~~~~~~~~~~~~~~~~~~~~~~~~~~~~~~~~~~~~~~~~~~~~~~~~~~~~~~~~~~~~
%~~~~~~~~~~~~~~~~~~~~~~~~~~~~~~~~~~~~~~~~~~~~~~~~~~~~~~~~~~~~~~~~~~~~~~~~~~~~~~~~~~~~~~~~~~~~~~~
\section{The non-degeneracy result and eigenvalue estimates}

In this section we shall study the eigenvalue estimates for
\begin{eqnarray}\label{eq-l-u-h}
L_\epsilon(\phi):=\Delta\phi-\phi+p|u_\epsilon|^{p-1}\phi
\end{eqnarray}
and prove Theorem \ref{thm1.2}.

\medskip

\noindent{\bf Proof of Theorem \ref{thm1.2}.} Let $\lambda_k, e_k(\theta)$ with $\theta\in S^{N-1}$ be the eigenvalues and eigenfunctions of the Laplace-Beltrami operator on $S^{N-1}$. Then
\begin{equation*}
\lambda_0=0<\lambda_1=\cdots=\lambda_N=N-1<\lambda_{N+1}\leq\cdots,
\end{equation*}
and $e_k$ are normalized so that they form a complete orthonormal basis of $L^2(S^{N-1})$. In fact the set of eigenvalues is given by  $\{j(N-2+j)\ |\ j\geq0\}$.

\medskip

Suppose $\phi$ satisfies
\begin{equation*}
L_\epsilon (\phi)=0 \mbox{ in } R^N, \ \phi(x)\to 0 \mbox{ as }|x|\to \infty.
\end{equation*}
Put
\begin{equation*}
\phi_k(r)=\int_{S^{N-1}}\phi(r,\theta)e_k(\theta)d\theta,
\end{equation*}
then $\phi_k(r)\to 0$ as $r\to \infty$, and it satisfies
\begin{eqnarray}\label{sh-u}
\phi_k''+\frac{N-1}{r}\phi_k'-\phi_k+p|u_\epsilon|^{p-1}\phi_k+\frac{(-\lambda_k)}{r^2}\phi_k=0\quad \mathrm{in}\ (0,\infty)\quad\mathrm{and}\ \lim\limits_{r\rightarrow\infty}\phi_k(r)=0,
\end{eqnarray}
for $k=0,1,\cdots$.  We claim that $\phi_k=0$ for $k\geq N+1$.

\medskip

To this end, let us consider the eigenvalues of the problem
\begin{eqnarray}\label{ev-u}
\phi_k''+\frac{N-1}{r}\phi_k'-\phi_k+p|u_\epsilon|^{p-1}\phi_k+\frac{\nu}{r^2}\phi_k=0\quad \mathrm{in}\ (0,\infty)\quad\mathrm{and}\ \lim\limits_{r\rightarrow\infty}\phi_k(r)=0.
\end{eqnarray}

\medskip

The $l$-th eigenvalue of (\ref{ev-u}) can be characterized variationally as
\begin{eqnarray}\label{nu-u}
\nu_l(p)=\max\limits_{\dim(V)<l}\inf\limits_{\phi\in V^\perp}\frac{\int_0^\infty\big[|\phi'|^2+|\phi|^2\big]r^{N-1}\,dr-p\int_0^\infty|u_\epsilon|^{p-1}|\phi|^2r^{N-1}\,dr}{\int_0^\infty|\phi|^2r^{N-3}\,dr},
\end{eqnarray}
where $V$ runs through subspaces of $H_{r}^1(\mathbb{R}^{N})$ and $V^\perp$ is the set of $\phi\in H_{0,r}^1(\mathbb{R}^{N})$ satisfying $\int_0^\infty \phi ur^{N-3}=0$ for all $u\in V$, and $H_{r}^1(\mathbb{R}^{N})$ be the space of radial functions in $H^1(\mathbb{R}^{N})$. Thanks to Hardy's inequality:
\begin{eqnarray*}
\frac{(N-2)^2}{4}\int_0^\infty|\phi|^2r^{N-3}\,dr\leq\int_0^\infty|\phi'|^2r^{N-1}\,dr,
\end{eqnarray*}
the eigenvalues $\nu_1(p)\leq\nu_2(p)\leq\cdots$ are well defined. Using Hardy's embedding and a simple compactness argument involving the fast decay of $|u_\epsilon|^{p-1}$, there is an extremal for $\nu_l(p)$ which represents a solution to problem (\ref{ev-u}) for $\nu=\nu_l(p)$.

\medskip

To prove Theorem \ref{thm1.2} we need to know whether and when $\nu_l(p)$ equals $-\lambda_k$. To show this more information about solutions is required. So we consider the corresponding problems for $v_\epsilon$ using the Emden-Fowler transformation. Then the eigenvalue problem (\ref{ev-u}) becomes
\begin{eqnarray}\label{ev-v}
\tilde{L}_\epsilon[\psi]:=\psi''-\beta\psi'-(\gamma+e^{2t})\psi+p|v_\epsilon|^{p-1}\psi=-\nu\psi\ \mathrm{in}\ (-\infty,\infty)\quad \mathrm{and}\ \lim\limits_{|t|\rightarrow\infty}\psi(t)=0.
\end{eqnarray}

\medskip

For the proof of Theorem \ref{thm1.2}, let us consider first the radial mode $k=0$, namely $\lambda_k=0$. The following result, which contains elements of independent interest, gives the small eigenvalue estimates of $L_\epsilon$ and shows that $\psi_k=0$ for the mode $k=0$.

\begin{pop}\label{pop-eigen}
For $\epsilon$ small enough, the eigenvalue problem
\begin{eqnarray*}
L_\epsilon\phi_\epsilon=\mu_\epsilon\phi_\epsilon
\end{eqnarray*}
has exactly two small eigenvalues $\mu_\epsilon^j$, $j=1,2$, which satisfy
\begin{eqnarray}\label{eigen-j}
\frac{\mu_\epsilon^j}{\epsilon}\rightarrow -c_0\xi_j,\quad\mathrm{up\ to\ a\ subsequence\ as}\ \epsilon\rightarrow0,\ \mathrm{for}\ j=1,2,
\end{eqnarray}
where $\xi_j$'s are the eigenvalues of the Hessian matrix $\nabla^2\widetilde{K}_\epsilon$ and $c_0$ is a positive constant. Furthermore, the corresponding eigenfunctions $\phi_\epsilon^j$'s satisfy
\begin{eqnarray*}
\phi_\epsilon^j=\sum\limits_{i=1}^2\big[a_{ij}+o(1)\big]\partial_{t_i}w_{\epsilon,\textbf{t}}+O(\epsilon),\quad j=1,2,
\end{eqnarray*}
where $\textbf{a}_j=(a_{1,j},\dots,a_{2,j})^T$ is the eigenvector associated with $\xi_j$, namely,
\begin{eqnarray*}
\nabla^2\widetilde{K}_\epsilon \textbf{a}_j=\xi_j \textbf{a}_j.
\end{eqnarray*}
\end{pop}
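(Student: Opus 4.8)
The plan is to pass to the Emden--Fowler variable and reduce the eigenvalue problem $L_\epsilon\phi_\epsilon=\mu_\epsilon\phi_\epsilon$ to a two-dimensional one, in complete analogy with the reduction already carried out for the solutions. Restricting to the radial mode $k=0$ and setting $\psi_\epsilon=r^\alpha\phi_\epsilon$, a direct computation turns the equation into the weighted problem $\tilde L_\epsilon[\psi_\epsilon]=\mu_\epsilon e^{2t}\psi_\epsilon$ on $H$, paired in the inner product $\langle\cdot,\cdot\rangle_\epsilon$. The relevant approximate kernel is the two-dimensional space $\mathcal K=\mathrm{span}\{\partial_{t_1}w_{\epsilon,\mathbf{t}},\partial_{t_2}w_{\epsilon,\mathbf{t}}\}$ generated by the two concentration parameters; these are nearly null directions since $\overline L_\epsilon[\partial_{t_j}w_{\epsilon,\mathbf{t}}]=O(\beta^\tau)$, exactly the estimate recorded just before (\ref{w-L-w}).

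First I would establish the spectral gap, which yields the counting statement ``exactly two small eigenvalues''. From Proposition \ref{pop3.1}, the operator $L_{\epsilon,\mathbf{t}}$ is uniformly invertible on the $\langle\cdot,\cdot\rangle_\epsilon$-orthogonal complement $\mathcal K^\perp$, so its quadratic form is coercive there with a bound independent of $\epsilon$ and of $\mathbf{t}\in\Lambda$. A standard continuity argument on the spectrum then forces $L_\epsilon$ to have precisely two eigenvalues in a neighborhood of $0$ that shrinks with $\epsilon$, all remaining eigenvalues staying bounded away from $0$ uniformly in $\epsilon$.

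For each small eigenvalue $\mu_\epsilon$ I would decompose the eigenfunction as $\psi_\epsilon=\sum_{i=1}^2 a_i\,\partial_{t_i}w_{\epsilon,\mathbf{t}}+\psi_\epsilon^\perp$ with $\psi_\epsilon^\perp\in\mathcal K^\perp$, project onto $\mathcal K^\perp$, and use the uniform invertibility together with $\|\overline L_\epsilon[\partial_{t_i}w_{\epsilon,\mathbf{t}}]\|_\infty=O(\beta^\tau)$ and $\mu_\epsilon=o(1)$ to conclude that $\psi_\epsilon^\perp=O(\epsilon)$ relative to the kernel part; this already produces the claimed expansion $\phi_\epsilon^j=\sum_i[a_{ij}+o(1)]\partial_{t_i}w_{\epsilon,\mathbf{t}}+O(\epsilon)$. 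Testing the equation against $\partial_{t_j}w_{\epsilon,\mathbf{t}}$ then reduces matters to the generalized eigenvalue problem $A_\epsilon\mathbf{a}=\mu_\epsilon M_\epsilon\mathbf{a}$, where $A_{ij}=\langle\overline L_\epsilon[\partial_{t_i}w_{\epsilon,\mathbf{t}}],\partial_{t_j}w_{\epsilon,\mathbf{t}}\rangle_\epsilon$ (at the critical point the linearization $\tilde L_\epsilon$ coincides with $\overline L_\epsilon$) and $M_{ij}=\int_{-\infty}^\infty e^{2t}\,\partial_{t_i}w_{\epsilon,\mathbf{t}}\,\partial_{t_j}w_{\epsilon,\mathbf{t}}\,e^{-\beta t}\,dt$. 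Here the remainder is harmless: since $\tilde L_\epsilon$ is self-adjoint for $\langle\cdot,\cdot\rangle_\epsilon$, the cross term equals $\langle\psi_\epsilon^\perp,\overline L_\epsilon[\partial_{t_j}w_{\epsilon,\mathbf{t}}]\rangle_\epsilon=O(\epsilon\,\beta^\tau)=o(\beta)$.

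The decisive step is to match $A_\epsilon$ with the Hessian of the reduced energy. Comparing with the proof of Proposition \ref{pop4.1} and with (\ref{w-L-w}) gives $A_{ij}=-\partial_{t_i}\partial_{t_j}K_\epsilon(\mathbf{t}_\epsilon)+o(\beta)=-\partial_{t_i}\partial_{t_j}\widetilde K_\epsilon(\mathbf{t}_\epsilon)+o(\beta)$, the discrepancy between $\tilde L_\epsilon$ and $\overline L_\epsilon$ being of lower order. Evaluating $M_\epsilon$ through the concentration of $\partial_{t_i}w_{\epsilon,\mathbf{t}}$ near $t=t_i$ and the decay estimates of Lemma \ref{lem-4} (which tame the $e^{2t}$ weight against the confinement in $H$) isolates the positive constant $c_0$ and the correct power of $\epsilon$; diagonalizing $A_\epsilon\mathbf{a}=\mu_\epsilon M_\epsilon\mathbf{a}$ then delivers $\mu_\epsilon^j/\epsilon\to-c_0\xi_j$, with eigenvectors $\mathbf{a}_j$ of $\nabla^2\widetilde K_\epsilon$. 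I expect the genuine obstacles to be twofold: the spectral-gap/counting step, which rests on the uniform coercivity of $L_{\epsilon,\mathbf{t}}$ on $\mathcal K^\perp$ and on excluding any further collapse of eigenvalues toward $0$; and the precise asymptotics of the mass matrix $M_\epsilon$ (the source of $c_0$ and of the exact $\epsilon$-scaling), which is dimension-dependent and relies on the appendix decay estimates, along with the bookkeeping that confines the off-kernel part $\psi_\epsilon^\perp$ to order $o(\beta)$ in the reduced matrix.
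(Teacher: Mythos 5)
Your outline has the right skeleton (reduce to a $2\times2$ problem on $\mathrm{span}\{\partial_{t_1}w_{\epsilon,\textbf{t}},\partial_{t_2}w_{\epsilon,\textbf{t}}\}$, identify the stiffness matrix with $-\nabla^2\widetilde{K}_\epsilon$ via (\ref{w-L-w}), read off the small eigenvalues), and your identification $A_{ij}=-\partial_{t_i}\partial_{t_j}\widetilde{K}_\epsilon(\textbf{t}_\epsilon)+o(\beta)$ is correct. But there is a genuine gap in the normalization, and it is fatal to the final step. You transform the literal equation $L_\epsilon\phi=\mu\phi$ and correctly obtain the \emph{weighted} problem $\tilde L_\epsilon[\psi]=\mu e^{2t}\psi$, hence your mass matrix is $M_{ij}=\int_{-\infty}^\infty e^{2t}\partial_{t_i}w_{\epsilon,\textbf{t}}\,\partial_{t_j}w_{\epsilon,\textbf{t}}\,e^{-\beta t}\,dt$. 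For the conclusion $\mu_\epsilon^j\approx-c_0\,\epsilon\,\xi_j$ to come out of $A_\epsilon\mathbf{a}=\mu M_\epsilon\mathbf{a}$ with $A_\epsilon=O(\beta)$, you would need $M_\epsilon=\frac{1}{c_0\epsilon}\big(\mathrm{Id}+o(1)\big)$, i.e.\ a \emph{large, asymptotically scalar} matrix. It is neither. Since $\partial_{t_i}w_{\epsilon,\textbf{t}}\approx w'(t-t_i)$ (up to sign) concentrates near $t_i$, for $N\geq5$ one finds
\begin{align*}
M_{ii}=\big(1+o(1)\big)\,e^{2t_i}\int_{-\infty}^\infty e^{2s}|w'(s)|^2\,ds,
\end{align*}
and Lemma \ref{basic lemma} gives $e^{2t_2}\approx a_{0,N}\beta$ and $e^{2t_1}=e^{2t_2}\big(b_{0,N}\beta\big)^{4/(N-2)}$; so $M_\epsilon\approx C\,\mathrm{diag}(e^{2t_1},e^{2t_2})$ with both entries tending to $0$ and $M_{11}/M_{22}\to0$. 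The resulting generalized eigenvalues have sizes $\beta e^{-2t_1}\to\infty$ and $\beta e^{-2t_2}\to a_{0,N}^{-1}$: neither tends to zero, they are of different orders, and no single constant $c_0$ can emerge. (For $N=3,4$ it is worse: $\int e^{2s}|w'|^2\,ds$ diverges, $w\notin H$, and $M_{ii}$ is governed by the matching corrections of Lemma \ref{lem-4}, e.g.\ $M_{ii}\sim Ce^{t_i}$ for $N=3$ — still wildly non-uniform.) So the sentence ``evaluating $M_\epsilon$ \dots isolates the positive constant $c_0$ and the correct power of $\epsilon$'' is not merely unjustified; it is false for your $M_\epsilon$, and carried out honestly your reduction contradicts the statement instead of proving it. The same weight issue undermines the counting step, where in addition coercivity on $\mathcal{K}^\perp$ does not follow from Proposition \ref{pop3.1} (that is an $L^\infty$ a priori bound for the projected problem, not a quadratic-form estimate), and for the weighted problem one would need coercivity relative to the $e^{(2-\beta)t}dt$ norm, in which the two ``almost null'' directions are not almost null.

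The repair is to pose the eigenvalue problem in the variable where the two bumps have comparable scale, namely the unweighted Emden--Fowler problem $\tilde L_\epsilon[\psi]=\mu\psi$ in $L^2(e^{-\beta t}dt)$, equivalently $L_\epsilon\phi=\mu\,r^{-2}\phi$ in the original variable. This is the version the paper actually needs and uses: in the mode-by-mode analysis (\ref{sh-u})--(\ref{ev-v}) the spherical-harmonic eigenvalue enters through $\nu/r^{2}$, so ``$0$ is not an eigenvalue for $k=0$'' is precisely a statement about the unweighted $t$-variable problem, and it is this problem that the cited arguments of Section 5 of \cite{W} and Section 2 of \cite{LWY} handle. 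In that pairing the Gram matrix is $\int_{-\infty}^\infty\partial_{t_i}w_{\epsilon,\textbf{t}}\partial_{t_j}w_{\epsilon,\textbf{t}}e^{-\beta t}\,dt=\delta_{ij}\int_{-\infty}^\infty|w'|^2\,dt+o(1)$ — uniform and of order one — so your reduction, combined with (\ref{w-L-w}), yields exactly two eigenvalues $\mu_\epsilon^j=-\xi_j\big(\int_{-\infty}^\infty|w'|^2dt\big)^{-1}(1+o(1))$ of order $\beta\sim\epsilon$, which is the assertion of the Proposition with $c_0$ essentially equal to $\frac{(N-2)^2}{4}\big(\int_{-\infty}^\infty|w'|^2dt\big)^{-1}$ (interpreting $\xi_j$ as eigenvalues of $\beta^{-1}\nabla^2\widetilde K_\epsilon$, as in Step 2 of the uniqueness proof). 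The rest of your outline survives unchanged once the inner product is corrected.
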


\begin{rk}
By (\ref{eigen-j}) we know that $\mu_\epsilon\neq0$ and then obtain the non-degeneracy of $v_\epsilon$ in the space of $H^1$-radial symmetric functions.
\end{rk}

\medskip
\noindent
{\bf Proof of proposition \ref{pop-eigen}.}
To prove this Proposition, one may follow the arguments given in Section 5 of \cite{W} or Section 2 of \cite{LWY} and the following estimates
\begin{eqnarray}
\int\limits_{-\infty}^\infty \overline{L}_\epsilon[\partial_{t_i}w_{\epsilon,\textbf{t}}]
 \partial_{t_j}w_{\epsilon,\textbf{t}}e^{-\beta t}\,dt
 =\left\{\begin{array}{lll}
-\frac{1}{4}e^{(t_1-t_2)/2}A_{\epsilon,3}\int\limits_{-\infty}^\infty w^pe^{t/2}\,dt+o(\beta),\quad\mathrm{for}\ i=j=1;\\\\
\frac{1}{4}e^{(t_1-t_2)/2}A_{\epsilon,3}\int\limits_{-\infty}^\infty w^pe^{t/2}\,dt+o(\beta),\quad\mathrm{for}\ i\neq j;\\\\
-\Big[\frac{1}{4}e^{(t_1-t_2)/2}+\frac{1}{2}e^{t_2}\Big] A_{\epsilon,3}\int\limits_{-\infty}^\infty w^pe^{t/2}\,dt+o(\beta),\quad\mathrm{for}\ i=j=2,
\end{array}
\right.
\end{eqnarray}
given in Appendix C.

\qed

\medskip

Let us consider now mode $1$ for (\ref{sh-u}), namely $k=1,\dots,N$, for which $\lambda_k=N-1$. In this case we have an explicit solution $u_\epsilon'(r)$. Now we show that $\phi_k=C_k u_\epsilon'$ for some constants $C_k$ for $k=1,\dots,N$. This is not trivial since $u_\epsilon'(r)$ changes sign once. Suppose that $\phi_k$ solve (\ref{sh-u}). We first multiply equation of $\phi_k$ by $u_\epsilon'$ and the equation of $u_\epsilon'$ by $\phi_k$, and integrate over the ball $B_r$ centered at the origin with radius $r$. Since they satisfy the same equation, we get
\begin{eqnarray*}
\phi_k'(r)u_\epsilon'(r)-\phi_k(r)u_\epsilon''(r)=0,
\end{eqnarray*}
from which we get $\phi_k=C_k u_\epsilon'$ for some constants $C_k$.

\medskip

Finally let us consider modes $2$ and higher. Assume now that $k\geq N+1$ for which $\lambda_k\geq 2N$. Since $u_\epsilon'(r)$ has exactly one zero in $(0,\infty)$ and $\lambda_k>\lambda_1$, by the standard Sturm-Liouville comparison theorem, $\phi_k$ does not change sign in $(0,\infty)$. On the other hand, by Sturm-Liouville theory, it is well known that the eigenfunctions corresponding to $\nu_l$ much change sign in $(0,\infty)$ at least $l-1$ times. Thus the only possibility for equation (\ref{sh-u}) to have a nontrivial solution for a given $k\geq N+1$ is that $\lambda_k=-\nu_1(p)$. In the next proposition we shall show that $-\nu_1(p)\rightarrow\lambda_1=N-1$ as $p\rightarrow\frac{N+2}{N-2}$. Therefore we get $\lambda_k\neq-\nu_1(p)$ for $k\geq N+1$ when $p$ is closed to $\frac{N+2}{N-2}$ and then complete the proof of Theorem \ref{thm1.2}.

\qed

\begin{pop}\label{mode2}
As $p\uparrow\frac{N+2}{N-2}$, we have that $-\nu_l(p)\rightarrow\lambda_1=N-1$ for $l\leq2$.
\end{pop}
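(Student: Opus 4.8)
The plan is to work in the Emden--Fowler picture (\ref{ev-v}), where, by the isomorphism of the transformation, $-\nu_l(p)$ is exactly the $l$-th largest eigenvalue $\Lambda_l^\epsilon$ of the one-dimensional operator $\widetilde L_\epsilon$. Since $\widetilde L_\epsilon$ is self-adjoint for the weighted product $\langle\cdot,\cdot\rangle_\epsilon$, and conjugating away the drift $-\beta\psi'$ turns it into a Schr\"odinger operator with essential spectrum $(-\infty,-\gamma_0]$, the top eigenvalues admit the usual Sturm--Liouville nodal characterization. First I would dispose of $l=2$ exactly. Differentiating (\ref{eq0.1r}) shows that $u_\epsilon'$ solves (\ref{ev-u}) with $\nu=-(N-1)$; it is admissible (near $r=0$, $u_\epsilon'\sim c\,r$, so $\int_0^\infty|u_\epsilon'|^2r^{N-3}\,dr<\infty$, and it decays exponentially at infinity) and has exactly one interior zero, at the minimum point of $u_\epsilon$. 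Since a one-node eigenfunction is the second one, $-\nu_2(p)=N-1$ identically, while simplicity of the ground state gives $-\nu_1(p)>N-1$. This yields the free lower bound and reduces the proposition to the single limit $-\nu_1(p)\to N-1$.

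Next I would identify the limiting single-bump operator and its spectrum. By Lemma \ref{lem-3} and (\ref{t}), blowing up $\widetilde L_\epsilon$ about either concentration point $t_j$ (where $\beta\to0$, $\gamma\to\gamma_0$, $e^{2(s+t_j)}\to0$ and $v_\epsilon(\cdot+t_j)\to\pm w_0$, hence $|v_\epsilon|^{p-1}\to w_0^{p_0-1}$ with $p_0=\frac{N+2}{N-2}$) yields the autonomous operator $\mathcal L_0:=\partial_t^2-\gamma_0+p_0\,w_0^{p_0-1}$ on $\mathbb R$. At criticality $w_0(t)=C_N(\cosh t)^{-(N-2)/2}$, and a direct computation gives $p_0\,w_0^{p_0-1}=\tfrac{N(N+2)}{4}\,\mathrm{sech}^2 t$, so $\mathcal L_0$ is an explicitly solvable P\"oschl--Teller operator. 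Its essential spectrum is $(-\infty,-\gamma_0]$ and its largest eigenvalue is $\mu_0=N-1$, with positive eigenfunction $(\cosh t)^{-N/2}$, the next eigenvalue being $0$ (eigenfunction $w_0'$); thus $\max\mathrm{spec}(\mathcal L_0)=N-1$.

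Finally I would run a concentration--compactness argument for the matching upper bound. Let $\psi_\epsilon$ be a normalized top eigenfunction, with eigenvalue $\Lambda_1^\epsilon=-\nu_1(p)$; since $\|v_\epsilon\|_\infty\le C$ by Lemma \ref{lem-1}, one has $N-1\le\Lambda_1^\epsilon\le C$, so $\Lambda_1^\epsilon\to\Lambda_*\in[N-1,C]$ along a subsequence. As $\Lambda_*>0$ lies strictly above $-\gamma_0$, the spectral gap forces $\psi_\epsilon$ to decay exponentially away from $\{t_1,t_2\}$ (the term $e^{2t}$ controls the right tail, the constant $-\gamma$ the left), so its mass stays in bounded neighbourhoods of the two bumps. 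Translating to whichever bump retains non-vanishing mass and passing to the $C^1_{\mathrm{loc}}$ limit, the rescaled eigenfunctions converge to a nontrivial $\Psi_*$ with $\mathcal L_0\Psi_*=\Lambda_*\Psi_*$. Hence $\Lambda_*\in\mathrm{spec}(\mathcal L_0)$ with $\Lambda_*\ge N-1=\max\mathrm{spec}(\mathcal L_0)$, so $\Lambda_*=N-1$ and $-\nu_1(p)\to N-1$. Together with $-\nu_2(p)=N-1$ this gives the claim for $l\le 2$.

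The delicate step is the last one: one must exclude loss of mass of $\psi_\epsilon$ into the essential-spectrum regime so that $\Psi_*\not\equiv0$, and pin down $N-1$ as the top of $\mathrm{spec}(\mathcal L_0)$. Both are secured by the uniform spectral gap $N-1>-\gamma_0$ together with the explicit P\"oschl--Teller spectrum, and the analysis runs parallel to the small-eigenvalue computations in \cite{W} and \cite{LWY}.
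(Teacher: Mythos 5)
Your proof is correct, but it reaches Proposition \ref{mode2} by a genuinely different route in its key step. The paper stays entirely inside the max--min characterization (\ref{nu-v}): it inserts the two almost-orthogonal test functions $\psi_j=w_{j,\textbf{t}_{\epsilon,j}}^{\frac{p+1}{2}}$, $j=1,2$ (one approximate copy of the limiting principal eigenfunction $\Psi_1=w_0^{\frac{N}{N-2}}$ per bump), observes that any one-dimensional constraint space $W$ still leaves some nontrivial combination $c_1\psi_1+c_2\psi_2$ orthogonal to $W$, and computes the Rayleigh quotient of that combination to be $\leq-(N-1)+o(1)$, which gives $\nu_l(p)\leq\nu_2(p)\leq-(N-1)+o(1)$; the matching lower bound is the paper's one-line remark that any limit of $\nu_l(p)$ must equal $-\mu_k$ for some eigenvalue $\mu_k\leq\mu_1=N-1$ of the limit problem (\ref{mu123}). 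You instead settle $l=2$ exactly: since $u_\epsilon'$ solves (\ref{ev-u}) with $\nu=-(N-1)$, is admissible, and has exactly one interior zero, simplicity of eigenvalues plus the Sturm--Liouville node-count identify it as the second eigenfunction, so $\nu_2(p)=-(N-1)$ identically, and the blow-up/compactness argument is needed only for $\nu_1$ --- where it is essentially a carefully spelled-out version of the paper's lower-bound sentence, resting on the same spectral data (your P\"oschl--Teller computation $p_0w_0^{p_0-1}=\tfrac{N(N+2)}{4}\,\mathrm{sech}^2t$, top eigenvalue $N-1$ with eigenfunction $(\cosh t)^{-N/2}$, is precisely (\ref{mu123})). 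The trade-off is this: your route is softer and sharper --- no interaction estimates or cross-term expansions for the two-bump Rayleigh quotient, and you obtain the exact identity $-\nu_2(p)\equiv N-1$ rather than a limit --- but it leans on the nodal facts (that $u_\epsilon'$ has exactly one zero in $(0,\infty)$, eigenvalue simplicity, and the exact node-to-index correspondence for this singular weighted problem); these are facts the paper itself invokes in the proof of Theorem \ref{thm1.2}, so your use is consistent with its framework, though the zero count of $u_\epsilon'$ is itself a byproduct of the asymptotic analysis rather than a free fact. Conversely, the paper's test-function construction is insensitive to any nodal information about $u_\epsilon'$ and exhibits the mechanism (one collapsing eigenvalue per bump). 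Finally, both arguments hinge on the same delicate compactness step, and your treatment of it --- sup-normalization, with the maximum point forced to lie within $O(1)$ of a bump by the gap $N-1>-\gamma_0$ --- is more complete than the paper's single sentence.
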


\medskip
\noindent
{\bf Proof of proposition \ref{mode2}.}
One may follow the arguments in Section 3 of \cite{DW}. Note that by the Emden-Fowler transformation, the eigenvalues have a variational characterization
\begin{eqnarray}\label{nu-v}
\nu_l(p)=\max\limits_{\dim(W)<l}\inf\limits_{\psi\in W^\perp}\frac{\int_{-\infty}^\infty\big[|\psi'|^2+(\gamma+e^{2t})|\psi|^2\big]e^{-\beta t}\,dt-p\int_{-\infty}^\infty|v_\epsilon|^{p-1}|\psi|^2e^{-\beta t}\,dt}{\int_{-\infty}^\infty|\psi|^2e^{-\beta t}\,dt},
\end{eqnarray}
where $W$ runs through the subspaces of $H$ and $W^\perp$ is the set of $\psi\in W$ satisfying $\int_{-\infty}^\infty \psi ve^{-\beta t}\,dt=0$ for all $v\in W$. Note that the term involving the weight is relatively compact and it follows from a previous argument that the eigenvalues exist.

\medskip

Observe that the limiting eigenvalue problem
\begin{eqnarray*}
\psi''-\frac{(N-2)^2}{4}\psi+\frac{N+2}{N-2}w_0^{\frac{4}{N-2}}\psi=\mu\psi,\quad\psi(\pm\infty)=0,
\end{eqnarray*}
admits eigenvalues
\begin{eqnarray}\label{mu123}
\mu_1=N-1,\quad\mu_2=0,\quad\mu_3<0,\quad \cdots,
\end{eqnarray}
where the corresponding eigenfunction for the principal eigenvalue $\mu_1$ is positive and denoted by $\Psi_1$. A simple computation shows that we can take $\Psi_1=w_0^{\frac{N}{N-2}}$. Now we take $\psi_j=w_{j,\textbf{t}_{\epsilon,j}}^{\frac{p+1}{2}}$, $j=1,2$. Let $W$ be a given one-dimensional subspace. Then there exists $c_1,c_2$ (not all equal to $0$) such that $\int_{-\infty}^\infty\big(\sum\limits_{j=1}^2c_j\psi_j\big)ve^{-\beta t}\,dt=0$ for all $v\in W$. We then compute that
\begin{eqnarray*}
&\int_{-\infty}^\infty\big[|\psi'|^2+(\gamma+e^{2t})|\psi|^2\big]e^{-\beta t}\,dt-p\int_{-\infty}^\infty|v_\epsilon|^{p-1}|\psi|^2e^{-\beta t}\,dt\\
&\leq\sum\limits_{j=1}^2c_j^2\big(-\mu_1+o(1)\big)\int_{-\infty}^\infty|\psi|^2e^{-\beta t}\,dt,
\end{eqnarray*}
and hence by variational characterization of $\nu_2$ we deduce that
\begin{eqnarray}
\nu_l(p)\leq \nu_2(p)\leq-(N-1)+o(1),\quad l=1,2.
\end{eqnarray}
On the other hand, according to (\ref{mu123}), $v_l(p)\rightarrow\mu_k\geq-(N-1)$ for some $k$. Thus we have $\nu_l(p)\rightarrow-(N-1)$ as $p\rightarrow\frac{N+2}{N-2}$ for $l\leq2$.

\qed

%~~~~~~~~~~~~~~~~~~~~~~~~~~~~~~~~~~~~~~~~~~~~~~~~~~~~~~~~~~~~~~~~~~~~~~~~~~~~~~~~~~~~~~~~~~
%~~~~~~~~~~~~~~~~~~~~~~~~~~~~~~~~~~~~~~~~~~~~~~~~~~~~~~~~~~~~~~~~~~~~~~~~~~~~~~~~~~~~~~~~~~~~~~~
%~~~~~~~~~~~~~~~~~~~~~~~~~~~~~~~~~~~~~~~~~~~~~~~~~~~~~~~~~~~~~~~~~~~~~~~~~~~~~~~~~~~~~~~~~~~~~~~
\section{Appendices}

\subsection{Appendix A}

In this subsection we shall give the estimates of $w_{j,t_j}$, $j=1,2$. Recall that $w_{j,t_j}$ is the unique solution to the following equation
\begin{align}\label{eq-w-j}
v''-(\gamma_0+e^{2s})v+w_{t_j}^p=0,\ v\in H
\end{align}
whose existence is given by the Riesz's representation Theorem. Here $w$ is the unique positive even solution of
\begin{align}\label{eq-w-0}
w''-\gamma_0w+w^p=0.
\end{align}
In fact, the function $w(t)$ can be written explicitly and has the
following form
\begin{align*}
w(t)=\gamma_0^{\frac{1}{p-1}}(\frac{p+1}{2})^\frac{1}{p-1}
\Big[\cosh\big(\frac{p-1}{2}\gamma_0^{1/2}t\big)\Big]^{-\frac{2}{p-1}}
=A_{\epsilon,N}\Big[e^{\frac{p-1}{2}\gamma_0^{1/2}t}
+e^{-\frac{p-1}{2}\gamma_0^{1/2}t}\Big]^{-\frac{2}{p-1}}.
\end{align*}
Note that now $w$ has the following expansion
\begin{align*}
\left\{\begin{array}{ll}
w(t)=A_{\epsilon,N}e^{-\sqrt{\gamma_0}t}+O(e^{-p\sqrt{\gamma_0}t}),\quad t\geq0;\\\\
w'(t)=-\sqrt{\gamma_0}A_{\epsilon,N}e^{-\sqrt{\gamma_0}t}+O(e^{-p\sqrt{\gamma_0}t}),\quad
t\geq0,
\end{array}
\right.
\end{align*}
where $A_{\epsilon,N}>0$ is a constant depending on $\epsilon$ and $N$.

\medskip

To get the estimates of $w_{j,t_j}$, we write $w_{j,t_j}=w_{t_j}+\phi$, then by (\ref{eq-w-j}) and (\ref{eq-w-0}), $\phi$ satisfies
\begin{eqnarray}\label{eq-phi-j}
\phi''-(\gamma_0+e^{2s})\phi-e^{2s}w_{t_j}=0.
\end{eqnarray}

\medskip

Note that as $s\rightarrow\infty$,
$e^{2s}w_{t_j}(s)\to e^{\frac{N-2}{2}t_j}A_{\epsilon,N}e^{-\frac{N-6}{2}s}$. Hence when $N>6$, $\phi\in H$ and $\phi=O(e^{2t_j})$ . Therefore,
\begin{eqnarray}\label{N>6}
w_{j,t_j}=w_{t_j}+O(e^{2t_j}),\quad\mathrm{when}\ N>6.
\end{eqnarray}

\medskip

Next we consider $N\leq6$, let $\phi_N$ be the unique solution of
\begin{eqnarray}\label{eq-phi-t}
\phi''-(\gamma_0+e^{2s})\phi-e^{-\frac{N-6}{2}s}=0,\quad|\phi(s)|\rightarrow0,\ \mathrm{as}\ |s|\rightarrow\infty,
\end{eqnarray}
then
\begin{eqnarray}\label{N<6}
w_{j,t_j}=w_{t_j}+e^{\frac{N-2}{2}t_j}A_{\epsilon,N}\phi_N+O(e^{2t_j})=:w_{t_j}+\phi_{j,t_j}+O(e^{2t_j}),\quad\mathrm{when}\ N\leq6.
\end{eqnarray}

\medskip

The rest of this subsection will be devoted to the solvability of $\phi_N$. A key observation is that
\begin{align}
\phi_0=-e^{-\frac{N-2}{2}s}
\end{align}
is a special solution of (\ref{eq-phi-t}). Thus if we write
\begin{equation*}
\phi_N=\phi_0+\phi,
\end{equation*}
in order to find a solution of (\ref{eq-phi-t}) which satisfies the decay condition at $\infty$,    let
\begin{align}
\phi(s)=e^{-\frac{N-2}{2}s}\widetilde{\phi}\big(\lambda_N
e^{(N-2)s}\big),\quad\mathrm{where}\ \lambda_N=(N-2)^{-(N-2)}.
\end{align}
Then $\widetilde{\phi}$ satisfies
\begin{align}\label{phi-N}
\widetilde{\phi}''(s)=s^{-\frac{2N-6}{N-2}}\widetilde{\phi}(s),\quad\widetilde{\phi}(0)=1,
\quad\widetilde{\phi}(\infty)=0
\end{align}
and thus
\begin{align*}
\phi_N=-e^{-\frac{N-2}{2}s}\Big[1-\widetilde{\phi}\big(\lambda_N
e^{(N-2)s}\big)\Big].
\end{align*}

In the case of $N=3$, $\lambda_3=1$ and $\widetilde{\phi}=e^{-s}$. Then
\begin{align*}
\phi_3=-e^{-s/2}\big(1-e^{-e^s}\big).
\end{align*}
In the case of $N=4$, $\lambda_4=1/4$ and
\begin{eqnarray*}
\widetilde{\phi}(r)=2\sqrt{r}K_1(2\sqrt{r})=:\rho_0,
\end{eqnarray*}
where $K_1(z)$ is the modified Bessel function of second kind and
satisfies
\begin{eqnarray*}
z^2K_1''(z)+zK_1'(z)-(z^2+1)K_1(z)=0,
\end{eqnarray*}
see for example \cite{MO}. Then
\begin{align*}
\phi_4=-e^{-s}\Big[1-\rho_0(\frac{1}{4}e^{2s})\Big].
\end{align*}
For $N=5$,
\begin{align*}
\phi_5=-e^{-3s/2}\Big[1-(1+e^s)
 e^{-e^s}\Big].
\end{align*}
%Thus we need to expand $w_j$ further.
%
%Let $w_j=w+\phi^j_0+\delta_j\phi$, then $\phi\in H_j$ satisfies
%\begin{align*}
%\phi''-(\frac{9}{4}+\delta_j e^{2s})\phi-e^{2s}(w-A_5e^{-3s/2})=0.
%\end{align*}
%Let
%\begin{align*}
%\psi_0(s)=A_5\Big[e^{s/2}-\sqrt{e^{-s}+e^s}+e^{-3s/2}\log\big(e^s+\sqrt{1+e^{2s}}\big)\Big]\in
%H_j
%\end{align*}
%is the solution of
%\begin{align*}
%\psi''-\frac{9}{4}\psi-A_5e^{2s}\Big[(e^s+e^{-s})^{-3/2}-e^{-3s/2}\Big]=0.
%\end{align*}
%Let $\phi=\psi_0+\psi$, then $\psi$ satisfies
%\begin{align*}
%\psi''-(\frac{9}{4}+\delta_j
%e^{2s})\psi-\delta_je^{2s}\psi_0-e^{2s}\big[w-A_5(e^s+e^{-s})^{-3/2}\big]=0.
%\end{align*}
%Thus we can write $\psi=\psi_1+\psi_2$, where $\psi_1$ satisfies
%\begin{align*}
%\psi''-(\frac{9}{4}+\delta_j e^{2s})\psi-\delta_je^{2s}\psi_0=0
%\end{align*}
%and $\psi_2$ satisfies
%\begin{align*}
%\psi''-(\frac{9}{4}+\delta_j
%e^{2s})\psi-e^{2s}\big[w-A_5(e^s+e^{-s})^{-3/2}\big]=0.
%\end{align*}
%
%Note that $e^s\psi_0\in L^2(\mathbb{R})$. Thus for any function
%$h\in H_j$, we have
%\begin{align*}
%\Big|\int_{-\infty}^\infty \delta_j e^{2s}\psi_0h\,ds\Big|
%\leq\sqrt{\delta_j}\|e^s\psi_0\|_{L^2}\|\sqrt{\delta_j}
%e^{s}h\|_{L^2} \leq\sqrt{\delta_j}\|e^s\psi_0\|_{L^2}\|h\|_{H_j},
%\end{align*}
%which implies
%\begin{align*}
%\|\psi_1\|_{H_j}\leq\sqrt{\delta_j}\|e^s\psi_0\|_{L^2}.
%\end{align*}
%On the other hand, it is easy check that in the case of $N=5$,
%\begin{align*}
%e^{2s}\big[w-A_5(e^s+e^{-s})^{-3/2}\big]=O(\epsilon),
%\end{align*}
%thus
%\begin{align*}
%\|\psi_2\|_{H_j}=O(\epsilon).
%\end{align*}
In the case of $N=6$,
\begin{align*}
\phi_6=-e^{-2s}\big[1-u_0(\frac{1}{16^2}e^{4s})\big],
\end{align*}
where $u_0$ satisfies
\begin{align*}
u''(r)=\frac{u(r)}{r^{3/2}},\ u(0)=1, u(\infty)=0.
\end{align*}
Actually, we have
\begin{align*}
u_0(r)=8\sqrt{r}K_2(4r^{1/4}),
\end{align*}
where $K_2(z)$ is the modified Bessel function of second kind and
satisfies
\begin{align*}
z^2K_2''(z)+zK_2'(z)-(z^2+4)K_2(z)=0.
\end{align*}

\subsection{Appendix B}

In this appendix we expand the quality
$E_\epsilon[w_{\epsilon,\textbf{t}}]$ in terms of $\epsilon$
and $\textbf{t}$.

\begin{lem}\label{E0}
For $\textbf{t}\in\Lambda$ and $\epsilon$ sufficiently small, we
have for $N=3$,
\begin{align*}
E_\epsilon[w_{\epsilon,\textbf{t}}]&=
\big(\frac{1}{2}-\frac{1}{p+1}\big)(e^{-\beta t_1}+e^{-\beta
t_2})\int_{-\infty}^\infty
w^{p+1}\,dt+\frac{1}{2}e^{t_2}A_{\epsilon,3}\int_{-\infty}^\infty w^{p}e^{t/2}\,dt\nonumber\\
 &\quad+e^{-|t_1-t_2|/2}A_{\epsilon,3}\int_{-\infty}^\infty
w^pe^{t/2}\,dt +o(\beta)+o(e^{t_2})+o(e^{-|t_1-t_2|/2}).
\end{align*}

For $N=4$,
\begin{align*}
E_\epsilon[w_{\epsilon,\textbf{t}}]&=
\big(\frac{1}{2}-\frac{1}{p+1}\big)(e^{-\beta t_1}+e^{-\beta
t_2})\int_{-\infty}^\infty
w^{p+1}\,dt-\frac{1}{4}t_2e^{2t_2}A_{\epsilon,4}\int_{-\infty}^\infty w^{p}e^{t}\,dt\nonumber\\
 &\quad+e^{-|t_1-t_2|}A_{\epsilon,4}\int_{-\infty}^\infty w^pe^{t}\,dt
  +o(\beta)+o(t_2e^{2t_2})+o(e^{-|t_1-t_2|}).
\end{align*}

For $N\geq5$,
\begin{align*}
E_\epsilon[w_{\epsilon,\textbf{t}}]&=
\big(\frac{1}{2}-\frac{1}{p+1}\big)(e^{-\beta t_1}+e^{-\beta
t_2})\int_{-\infty}^\infty
w^{p+1}\,dt+\frac{1}{2}e^{2t_2}\int_{-\infty}^\infty w^{2}e^{2t}\,dt\nonumber\\
 &\quad+e^{-(N-2)|t_1-t_2|/2}A_{\epsilon,N}\int_{-\infty}^\infty
  w^pe^{(N-2)t/2}\,dt +o(\beta)+o(e^{2t_2})+o(e^{-(N-2)|t_1-t_2|/2}).
\end{align*}

\end{lem}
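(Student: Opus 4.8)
The plan is to substitute the ansatz $w_{\epsilon,\textbf{t}}=w_{1,t_1}-w_{2,t_2}$ from (\ref{eqw}) into $E_\epsilon$ and split it into its quadratic part $\frac12(w_{\epsilon,\textbf{t}},w_{\epsilon,\textbf{t}})_\epsilon$ and its nonlinear part $-\frac1{p+1}\int|w_{\epsilon,\textbf{t}}|^{p+1}e^{-\beta t}\,dt$. Since each $w_{j,t_j}$ solves (\ref{eq-w-j}) exactly, the natural first move is to integrate the quadratic part by parts and use that equation: for the diagonal terms one obtains $(w_{j,t_j},w_{j,t_j})_\epsilon=\int w_{j,t_j}w_{t_j}^p e^{-\beta t}\,dt+(\gamma-\gamma_0)\int w_{j,t_j}^2 e^{-\beta t}\,dt+\beta\int w_{j,t_j}w_{j,t_j}'e^{-\beta t}\,dt$, and since $\gamma-\gamma_0=-\beta^2/4$ and $|w_{j,t_j}'|$ is controlled by (\ref{w-phi-2}), the last two integrals are $O(\beta^2)=o(\beta)$. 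The cross term $(w_{1,t_1},w_{2,t_2})_\epsilon$ is handled the same way and reduces, to leading order, to the interaction integral $\int w_{1,t_1}w_{t_2}^p e^{-\beta t}\,dt$.

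For the nonlinear part I would split $\mathbb{R}=I_1\cup I_2$ at the midpoint $(t_1+t_2)/2$ exactly as in the proof of Lemma \ref{lem2.3}; on $I_i$ the bump $w_{i,t_i}$ dominates by (\ref{w-phi-1}) and the maximum principle, so $|w_{\epsilon,\textbf{t}}|^{p+1}$ may be Taylor-expanded around $w_{i,t_i}^{p+1}$. This produces the single-bump terms $\int w_{j,t_j}^{p+1}e^{-\beta t}\,dt$ together with the cross terms $\int w_{1,t_1}^p w_{2,t_2}\,e^{-\beta t}\,dt$ and $\int w_{2,t_2}^p w_{1,t_1}\,e^{-\beta t}\,dt$, the remainder being controlled by inequality (\ref{ieq-1}).

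The evaluation step then uses Lemma \ref{lem-4} to write $w_{j,t_j}=w_{t_j}+\phi_{j,t_j}+O(e^{2t_j})$, the decay (\ref{w}) and the identity (\ref{ide-w}), and Lemma \ref{lem2.1} (equivalently Lemma \ref{lem-B}) for the interaction integrals. After the change of variables $s=t-t_j$, which factors out $e^{-\beta t_j}$ and kills the first-order $\beta$-correction by evenness of $w$, the leading self-terms from the quadratic and nonlinear parts combine with coefficient $\frac12-\frac1{p+1}$ into $(\frac12-\frac1{p+1})(e^{-\beta t_1}+e^{-\beta t_2})\int w^{p+1}\,dt$, while each interaction integral contributes a multiple of $w(|t_1-t_2|)\int w^p e^{(N-2)t/2}\,dt$, i.e. $e^{-(N-2)|t_1-t_2|/2}A_{\epsilon,N}\int w^p e^{(N-2)t/2}\,dt$. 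Because $t_1<t_2$ with $|t_1-t_2|\to\infty$ by (\ref{t}), the self-interaction attached to $t_1$ is $o$ of the one attached to $t_2$ and is absorbed into the error.

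The main obstacle is the self-interaction term, whose size and form genuinely depend on $N$. For $N\geq5$ the function $w$ lies in $H$, the weighted integral $\int w^2 e^{2t}\,dt$ converges, and handling the piece $\frac12\int e^{2t}w_{\epsilon,\textbf{t}}^2 e^{-\beta t}\,dt$ directly yields the clean term $\frac12 e^{2t_2}\int w^2 e^{2t}\,dt$. For $N=3,4$, however, $w\notin H$ and this integral diverges; the divergence is exactly compensated by the correction $\phi_{j,t_j}$, which solves the inhomogeneous equation (\ref{eq-phi-t}) and is given explicitly in Appendix A. Feeding the $N=3$ expression (\ref{n=3}) into $\int\phi_{j,t_j}w_{t_j}^p e^{-\beta t}\,dt$ and using $1-e^{-e^t}\sim e^t$ near $t_j\to-\infty$ produces the term $\frac12 e^{t_2}A_{\epsilon,3}\int w^p e^{t/2}\,dt$, whereas for $N=4$ the Bessel-function correction (\ref{n=4}) introduces the extra logarithmic weight and yields $-\frac14 t_2 e^{2t_2}A_{\epsilon,4}\int w^p e^{t}\,dt$. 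Pinning down these coefficients dimension by dimension, and verifying that the remainders $O(e^{2t_j})$ from Lemma \ref{lem-4} together with the nonlinear remainders stay within the claimed $o(\cdot)$ bounds, is the delicate bookkeeping at the heart of the proof.
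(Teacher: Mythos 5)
Your proposal is correct and follows essentially the same route as the paper's Appendix B: decompose $E_\epsilon[w_{\epsilon,\textbf{t}}]$ into its quadratic and nonlinear parts, use the equations satisfied by $w_{j,t_j}$ to reduce the quadratic part (with the $\beta$-terms entering only at order $\beta^2$), split $\mathbb{R}$ at $(t_1+t_2)/2$ and Taylor-expand the $|w_{\epsilon,\textbf{t}}|^{p+1}$ term with remainder controlled by (\ref{ieq-1}), and then evaluate the diagonal, interaction, and $\phi_{j,t_j}$-correction integrals via Lemma \ref{lem-B}, Lemma \ref{lem2.1}, and the explicit formulas of Lemma \ref{lem-4}. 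The only difference is cosmetic bookkeeping (you expand around $w_{j,t_j}^{p+1}$ where the paper keeps the mixed terms $w_{t_j}^p w_{j,t_j}$, so your coefficient $\tfrac12 e^{t_2}A_{\epsilon,3}\int w^p e^{t/2}\,dt$ likewise emerges only after summing the several $\int w_{t_2}^p\phi_{2,t_2}$-type contributions rather than from a single integral), which your closing remark on delicate coefficient bookkeeping correctly anticipates.
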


\begin{proof}
Since the proofs are similar for different cases, we give the details for $N=3$ here. Integrating by parts we get
\begin{align*}
E_\epsilon[w_{\epsilon,\textbf{t}}]
&=\frac{1}{2}\int_{-\infty}^\infty\Big[
 -S_\epsilon[w_{\epsilon,\textbf{t}}]+|w_{\epsilon,\textbf{t}}|^{p-1}w_{\epsilon,\textbf{t}}\Big]
 w_{\epsilon,\textbf{t}}e^{-\beta
 t}\,dt-\frac{1}{p+1}\int_{-\infty}^\infty|w_{\epsilon,\textbf{t}}|^{p+1}
 e^{-\beta t}\,dt\\
&=\frac{1}{2}\int_{-\infty}^\infty\Big[ \beta
 w_{\epsilon,\textbf{t}}'+(\gamma-\gamma_0)w_{\epsilon,\textbf{t}}+w_{t_1}^p-w_{t_2}^p\Big]
 w_{\epsilon,\textbf{t}}e^{-\beta
 t}\,dt-\frac{1}{p+1}\int_{-\infty}^\infty|w_{\epsilon,\textbf{t}}|^{p+1}
 e^{-\beta t}\,dt\\
&=E_1+E_2+E_3-E_4+E_5,
\end{align*}
where
\begin{align*}
E_1=\frac{\beta}{2}\int_{-\infty}^\infty
w_{\epsilon,\textbf{t}}'w_{\epsilon,\textbf{t}}e^{-\beta t}\,dt
=\frac{\beta^2}{4}\int_{-\infty}^\infty
w_{\epsilon,\textbf{t}}^2e^{-\beta t}\,dt=O(\beta^2);
\end{align*}
\begin{align*}
E_2=\frac{(\gamma-\gamma_0)}{2}\int_{-\infty}^\infty
w_{\epsilon,\textbf{t}}^2e^{-\beta t}\,dt
=-\frac{\beta^2}{8}\int_{-\infty}^\infty
w_{\epsilon,\textbf{t}}^2e^{-\beta t}\,dt=O(\beta^2);
\end{align*}
\begin{align*}
E_3=-\frac{1}{2}\int_{-\infty}^\infty w_{t_1}^{p}w_{2,t_2}e^{-\beta
t}\,dt-\frac{1}{2}\int_{-\infty}^\infty
w_{1,t_1}w_{t_2}^{p}e^{-\beta t}\,dt;
\end{align*}
\begin{align*}
E_4=\frac{1}{p+1}\int_{-\infty}^\infty
\Big[|w_{1,t_1}-w_{2,t_2}|^{p+1}-w_{t_1}^{p}w_{1,t_1}-w_{t_2}^{p}w_{2,t_2}\Big]e^{-\beta
t}\,dt;
\end{align*}
\begin{align*}
E_5=\big(\frac{1}{2}-\frac{1}{p+1}\big)\Big[\int_{-\infty}^\infty
w_{t_1}^{p}w_{1,t_1}e^{-\beta t}\,dt+\int_{-\infty}^\infty
w_{t_2}^{p}w_{2,t_2}e^{-\beta t}\,dt\Big].
\end{align*}

\medskip

First for $E_3$, by Lemma \ref{lem-B} we have
\begin{align*}
E_3=-e^{-|t_1-t_2|/2}A_{\epsilon,3}\int_{-\infty}^\infty
 w^pe^{t/2}\,dt+o(\beta)+o(e^{t_2})+o(e^{-|t_1-t_2|/2}).
\end{align*}

To estimate $E_4$, we divide $\mathbb{R}$ into two intervals
$I_1,I_2$ defined by
$$I_1=(-\infty,\frac{t_1+t_2}{2}),\quad I_2=[\frac{t_1+t_2}{2},\infty).$$
So on $I_1$ the following equality holds:
\begin{eqnarray*}
&&\frac{1}{p+1}\Big[|w_{1,t_1}-w_{2,t_2}|^{p+1}
 -w_{t_1}^{p}w_{1,t_1}-w_{t_2}^{p}w_{2,t_2}\Big]\\
&=&\frac{1}{p+1}\big[(w_{1,t_1}-w_{2,t_2})^{p+1}-w_{1,t_1}^{p+1}+(p+1)w_{1,t_1}^{p}w_{2,t_2}\big]-w_{1,t_1}^{p}w_{2,t_2}\\
&&+\frac{1}{p+1}\big[(w_{t_1}+\phi_{1,t_1})^p-w_{t_1}^p-pw_{t_1}^{p-1}\phi_{1,t_1}\big]w_{1,t_1}+\frac{p}{p+1}w_{t_1}^{p}\phi_{1,t_1}\\
&&+\frac{p}{p+1}w_{t_1}^{p-1}\phi_{1,t_1}^2-\frac{1}{p+1}w_{t_2}^pw_{2,t_2}.
\end{eqnarray*}
As in the proof of Lemma \ref{basic lemma}, by the mean value theorem and inequality (\ref{ieq-1}) we have
\begin{eqnarray*}
&&\Big|\frac{1}{p+1}\Big[|w_{1,t_1}-w_{2,t_2}|^{p+1}
 -w_{t_1}^{p}w_{1,t_1}-w_{t_2}^{p}w_{2,t_2}\Big]\\
 &&+w_{1,t_1}^pw_{2,t_2}-\frac{p}{p+1}w_{t_1}^p\phi_{1,t_1}\Big|
\leq Cw_{t_1}^{p+1-\delta}w_{t_2}^\delta,
\end{eqnarray*}
for any $1<\delta<2$.

\medskip

Using Lemma \ref{lem2.1} and integrating by parts, we get
\begin{align*}
&\quad\frac{1}{p+1}\int_{I_1}
 \Big[|w_{t_1}-w_{t_2}|^{p+1}-w_{t_1}^{p}w_{1,t_1}-w_{t_2}^{p}w_{2,t_2}\Big]e^{-\beta
 t}\,dt\\
&=-\frac{p}{p+1}e^{t_1}A_{\epsilon,3}\int_{-\infty}^\infty
 w^{p}e^{t/2}\,dt
 -e^{-|t_1-t_2|/2}A_{\epsilon,3}\int_{-\infty}^\infty
 w^pe^{t/2}\,dt+o(e^{-|t_1-t_2|/2}).
\end{align*}
Similarly,
\begin{align*}
&\quad\frac{1}{p+1}\int_{I_2}
 \Big[|w_{t_1}-w_{t_2}|^{p+1}-w_{t_1}^{p}w_{1,t_1}-w_{t_2}^{p}w_{2,t_2}\Big]e^{-\beta
 t}\,dt\\
&=-\frac{p}{p+1}e^{t_2}A_{\epsilon,3}\int_{-\infty}^\infty
 w^{p}e^{t/2}\,dt
 -e^{-|t_1-t_2|/2}A_{\epsilon,3}\int_{-\infty}^\infty
 w^pe^{t/2}\,dt+o(e^{-|t_1-t_2|/2}).
\end{align*}
Hence
\begin{align*}
E_4=-\frac{p}{p+1}e^{t_2}A_{\epsilon,3}\int_{-\infty}^\infty
w^{p}e^{t/2}\,dt
-2e^{-|t_1-t_2|/2}A_{\epsilon,3}\int_{-\infty}^\infty
w^pe^{t/2}\,dt+o(e^{-|t_1-t_2|/2}).
\end{align*}

\medskip

Regarding the term $E_5$, by the Lemma \ref{lem-B} we have
\begin{align*}
E_5&=\big(\frac{1}{2}-\frac{1}{p+1}\big)\Big[\int_{-\infty}^\infty w_{t_1}^{p+1}e^{-\beta t}\,dt+\int_{-\infty}^\infty w_{t_1}^p\phi_{1,t_1}e^{-\beta t}\,dt\\
&+\int_{-\infty}^\infty w_{t_2}^{p+1}e^{-\beta t}\,dt+\int_{-\infty}^\infty w_{t_2}^p\phi_{2,t_2}e^{-\beta t}\,dt\Big]\\
&=\big(\frac{1}{2}-\frac{1}{p+1}\big)(e^{-\beta t_1}+e^{-\beta
 t_2})\int_{-\infty}^\infty
 w^{p+1}\,dt\\
 &-\big(\frac{1}{2}-\frac{1}{p+1}\big)e^{t_2}A_{\epsilon,3}\int_{-\infty}^\infty
 w^{p}e^{t/2}\,dt+o(\beta).
\end{align*}

Combining the above estimates for $E_1,E_2,E_3,E_4$ and $E_5$, we obtain
\begin{align*}
E_\epsilon[w_{\epsilon,\textbf{t}}]&=
 \big(\frac{1}{2}-\frac{1}{p+1}\big)(e^{-\beta t_1}+e^{-\beta
 t_2})\int_{-\infty}^\infty
 w^{p+1}\,dt+\frac{1}{2}e^{t_2}A_{\epsilon,3}\int_{-\infty}^\infty w^{p}e^{t/2}\,dt\nonumber\\
&\quad+e^{-|t_1-t_2|/2}A_{\epsilon,3}\int_{-\infty}^\infty
 w^pe^{t/2}\,dt +o(\beta)+o(e^{t_2})+o(e^{-|t_1-t_2|/2}).
\end{align*}

\end{proof}
%~~~~~~~~~~~~~~~~~~~~~~~~~~~~~~~~~~~~~~~~~~~~~~~~~~~~~~~~~~~~~~~~~~~~~~~~~~~~~~~~~~~~~~~~~~
%~~~~~~~~~~~~~~~~~~~~~~~~~~~~~~~~~~~~~~~~~~~~~~~~~~~~~~~~~~~~~~~~~~~~~~~~~~~~~~~~~~~~~~~~~~~~~~~
%~~~~~~~~~~~~~~~~~~~~~~~~~~~~~~~~~~~~~~~~~~~~~~~~~~~~~~~~~~~~~~~~~~~~~~~~~~~~~~~~~~~~~~~~~~~~~~~

\subsection{Appendix C}

In this section we give the technical proof of (\ref{w-L-w}) for $N=3$, that is,
\begin{eqnarray}\label{w-L-w-p}
\int\limits_{-\infty}^\infty \overline{L}_\epsilon[\partial_{t_i}w_{\epsilon,\textbf{t}}]
 \partial_{t_j}w_{\epsilon,\textbf{t}}e^{-\beta t}\,dt
 =\left\{\begin{array}{lll}
-\frac{1}{4}e^{(t_1-t_2)/2}A_{\epsilon,3}\int\limits_{-\infty}^\infty w^pe^{t/2}\,dt+o(\beta),\quad\mathrm{for}\ i=j=1;\\\\
\frac{1}{4}e^{(t_1-t_2)/2}A_{\epsilon,3}\int\limits_{-\infty}^\infty w^pe^{t/2}\,dt+o(\beta),\quad\mathrm{for}\ i\neq j;\\\\
-\Big[\frac{1}{4}e^{(t_1-t_2)/2}+\frac{1}{2}e^{t_2}\Big] A_{\epsilon,3}\int\limits_{-\infty}^\infty w^pe^{t/2}\,dt+o(\beta),\quad\mathrm{for}\ i=j=2.
\end{array}
\right.
\end{eqnarray}

\begin{proof}
Note that by (\ref{z-1-1}) and (\ref{Z}), we obtain
\begin{align}\label{C-1}
\overline{L}_\epsilon[\partial_{t_j}w_{\epsilon,\textbf{t}}]
=&-Z_{\epsilon,t_j}+p|v_{\epsilon,\textbf{t}}|^{p-1}\partial_{t_j}w_{\epsilon,\textbf{t}}\\
=&(-1)^{j}\Big[-pw_{t_j}^{p-1}w_{t_j}'
+\beta(\partial_{t_j}w_{j,t_j})'+(\gamma-\gamma_0)\partial_{t_j}w_{j,t_j}-p|v_{\epsilon,\textbf{t}}|^{p-1}\partial_{t_j}w_{j,t_j}\Big],\nonumber
\end{align}
and by the definition of $w_{\epsilon,\textbf{t}}$,
\begin{align}\label{C-2}
\partial_{t_j}w_{\epsilon,\textbf{t}}=(-1)^{j+1}\partial_{t_j}w_{j,t_j}=(-1)^{j+1}\big(\partial_{t_j}w_{t_j}+\partial_{t_j}\phi_{j,t_j}\big)+O(e^{2t_j}).
\end{align}

\medskip

In order to calculate the integration, we divide $(-\infty,\infty)$ into two intervals $I_1,I_2$
defined by
$$I_1=(-\infty,\frac{t_1+t_2}{2}),\quad I_2=[\frac{t_1+t_2}{2},\infty).$$

First we computer the case of $i\neq j$. By (\ref{C-1}) and (\ref{C-2}) we get
\begin{eqnarray*}
\int\limits_{-\infty}^\infty \overline{L}_\epsilon[\partial_{t_1}w_{\epsilon,\textbf{t}}]
\partial_{t_2}w_{\epsilon,\textbf{t}}e^{-\beta t}\,dt
&=&\int\limits_{-\infty}^\infty pw_{t_1}^{p-1}w_{t_1}'w_{t_2}'
-\int_{I_1}p|v_{\epsilon,\textbf{t}}|^{p-1}w_{t_1}'w_{t_2}'
-\int_{I_2}p|v_{\epsilon,\textbf{t}}|^{p-1}w_{t_1}'w_{t_2}'+o(\beta)\nonumber\\
&=&-\int\limits_{I_2} pw_{t_2}^{p-1}w_{t_1}'w_{t_2}'+o(\beta)\\
&=&\int\limits_{-\infty}^\infty w_{t_2}^pw_{t_1}''+o(\beta)\nonumber\\
&=&\frac{1}{4}e^{(t_1-t_2)/2}A_{\epsilon,3}\int\limits_{-\infty}^\infty w^pe^{t/2}\,dt+o(\beta).\nonumber
\end{eqnarray*}

For the case of $i=j=1$, recall that $v_{\epsilon,\textbf{t}}=w_{\epsilon,\textbf{t}}+\phi$, where $\phi=\phi_{\epsilon,\textbf{t}}$ is given by Proposition \ref{pop4.2}. Then on $I_1$:
\begin{eqnarray*}
&&p|v_{\epsilon,\textbf{t}}|^{p-1}(w_{t_1}')^2
-p|w_{t_1}|^{p-1}(w_{t_1}')^2\\
&=&
-p(p-1)w_{t_1}^{p-2}(w_{t_1}')^2w_{t_2}+p(p-1)w_{t_1}^{p-2}(w_{t_1}')^2\phi+o(\beta).
\end{eqnarray*}
So by (\ref{C-1}) and (\ref{C-2}) we obtain
\begin{eqnarray}\label{w-1-w-1}
&&\int\limits_{-\infty}^\infty \overline{L}_\epsilon[\partial_{t_1}w_{\epsilon,\textbf{t}}]
\partial_{t_1}w_{\epsilon,\textbf{t}}e^{-\beta t}\,dt\\
&=&-\int_{I_1} pw_{t_1}^{p-1}(w_{t_1}')^2e^{-\beta t}\,dt+\int_{I_1}p|v_{\epsilon,\textbf{t}}|^{p-1}(w_{t_1}')^2e^{-\beta t}\,dt+o(\beta)\nonumber\\
&=&-\int_{I_1}p(p-1)w_{t_1}^{p-2}(w_{t_1}')^2w_{t_2}\,dt+\int_{I_1}p(p-1)w_{t_1}^{p-2}(w_{t_1}')^2\phi+o(\beta)\nonumber\\
&=&T_1+T_2+o(\beta).\nonumber
\end{eqnarray}
Recall that $w_{j,t_j}$ satisfies
\begin{eqnarray*}
w_{j,t_j}''-(\gamma_0+e^{2t})w_{j,t_j}+w_{t_j}^p=0.
\end{eqnarray*}
So $\partial_{t_j}w_{j,t_j}$ and $\partial_{t_j}^2w_{j,t_j}$ satisfy
\begin{eqnarray*}
(\partial_{t_j}w_{j,t_j})''-(\gamma_0+e^{2t})(\partial_{t_j}w_{j,t_j})+pw_{t_j}^{p-1}(\partial_{t_j}w_{t_j})=0,
\end{eqnarray*}
and
\begin{eqnarray*}
(\partial_{t_j}^2w_{j,t_j})''-(\gamma_0+e^{2t})(\partial_{t_j}^2w_{j,t_j})+pw_{t_j}^{p-1}(\partial_{t_j}^2w_{t_j})+p(p-1)w_{t_j}^{p-2}(\partial_{t_j}w_{t_j})^2=0,
\end{eqnarray*}
which implies
\begin{eqnarray*}
p(p-1)w_{t_1}^{p-2}(w_{t_1}')^2=-L_\epsilon[\partial_{t_1}^2w_{1,t_1}]+o(\beta).
\end{eqnarray*}

\medskip

Hence
\begin{eqnarray*}
T_2=-\int_{I_1}\phi L_\epsilon[\partial_{t_1}^2w_{1,t_1}]+o(\beta).
\end{eqnarray*}
By (\ref{s[w]}) and Proposition \ref{pop4.2}, on $I_1$ we have on $I_1$
\begin{eqnarray*}
L_\epsilon[\phi]=\beta w_{t_1}'+pw_{t_1}^{p-1}w_{t_2}+o(\beta).
\end{eqnarray*}
Thus
\begin{eqnarray}\label{T2}
T_2&=-\int_{\mathbb{R}}w_{t_1}''\big[\beta w_{t_1}'+pw_{t_1}^{p-1}w_{t_2}\big]\,dt+o(\beta)\\
&=-\int_{\mathbb{R}}w_{t_1}''pw_{t_1}^{p-1}w_{t_2}\,dt+o(\beta).\nonumber
\end{eqnarray}
On the other hand,
\begin{eqnarray}\label{T1}
T_1&=&-\int_{\mathbb{R}}p(p-1)w_{t_1}^{p-2}(w_{t_1}')^2w_{t_2}\,dt+o(\beta)\nonumber\\
&=&\int_{\mathbb{R}}L_0[w_{t_1}'']w_{t_2}\,dt+o(\beta)\nonumber\\
&=&\int_{\mathbb{R}}w_{t_1}''L_0[w_{t_2}]\,dt+o(\beta)\nonumber\\
&=&\int_{\mathbb{R}}w_{t_1}''\big[w_{t_2}^p+pw_{t_1}^{p-1}w_{t_2}\big]\,dt+o(\beta)\nonumber\\
&=&-\int_{\mathbb{R}}w_{t_1}''w_{t_2}^p\,dt+\int_{\mathbb{R}}w_{t_1}''pw_{t_1}^{p-1}w_{t_2}\,dt+o(\beta),
\end{eqnarray}
where
\begin{eqnarray*}
L_0[\phi]:=\phi''-\gamma_0\phi+pw_{t_1}^{p-1}\phi.
\end{eqnarray*}
Combining (\ref{w-1-w-1}), (\ref{T2}) and (\ref{T1}), we get the desired result for $i=j=1$. The proof for $i=j=2$ is similar, we omit the details here.

\end{proof}

%~~~~~~~~~~~~~~~~~~~~~~~~~~~~~~~~~~~~~~~~~~~~~~~~~~~~~~~~~~~~~~~~~~~~~~~~~~~~~~~~~~~~~~~~~~
%~~~~~~~~~~~~~~~~~~~~~~~~~~~~~~~~~~~~~~~~~~~~~~~~~~~~~~~~~~~~~~~~~~~~~~~~~~~~~~~~~~~~~~~~~~~~~~~
%~~~~~~~~~~~~~~~~~~~~~~~~~~~~~~~~~~~~~~~~~~~~~~~~~~~~~~~~~~~~~~~~~~~~~~~~~~~~~~~~~~~~~~~~~~~~~~~

\end{document}